\newcommand\sset[1]{\{#1\}}
\newcommand\set[1]{\{\,#1\,\}}
\newcommand\Iso{\operatorname{Iso}}
\newcommand\go{G^{(0)}}
\newcommand\restr[1]{|_{#1}}
\newcommand\cs{\ensuremath{C^{*}}}
\DeclareMathOperator{\Prim}{Prim}
\newcommand\cc{C_{c}}
\DeclareMathOperator{\supp}{supp}
\newtheorem{thm}{Theorem}[section]
\newtheorem{lemma}[thm]{Lemma}
\newtheorem{prop}[thm]{Proposition}
\newtheorem{cor}[thm]{Corollary}
\theoremstyle{definition}
\theoremstyle{remark}
\newtheorem{remark}[thm]{Remark}
\newtheorem*{notation}{Notation}
\newtheorem{example}[thm]{Example}
\newtheorem{mycomment}{Comment}
{\end{mycomment}\endgroup}
\def\labelenumi{\textnormal{(\@alph\c@enumi)}}
\def\theenumi{\@alph \c@enumi}
\def\labelenumii{\textnormal{(\@roman\c@enumii)}}
\def\theenumii{\@roman \c@enumii}
\def\alphapart#1{\charno=96
\advance\charno by#1\char\charno}
\numberwithin{equation}{section}
\def\timeofday{
\hours=\time
\minutes=\hours
\divide\hours by60
\multiply\hours by60
\advance\minutes by-\hours
\divide\hours by60
\ifnum\hours>9\else0\fi\the\hours:\ifnum\minutes>9\else
0\fi\the\minutes}
\def\predate{\date{\color{red}\bfseries \the\day\ \ifcase\month\or
  January\or February\or March\or April\or May\or June\or July\or
        August\or September\or October\or November\or
           December\fi\ \the\year\ --- \timeofday}}
\let\phi\varphi
\def\intiso(#1){\Iso(#1)^{\circ}}
\newcommand\qs{\kappa}
\newcommand\Ii{\mathscr{I}}
\newcommand\Aa{\mathscr{A}}
\newcommand\Bb{\mathscr{B}}
\newcommand\C{\mathbf{C}}
\newcommand\N{\mathbf{N}}
\newcommand\R{\mathbf{R}}
\newcommand\T{\mathbf{T}}
\newcommand\Z{\mathbf{Z}}
\newcommand\smin{\Sigma^{\min}}
\newcommand\ymax{Y^{\max}}
\newcommand\alphat{\tilde\alpha}
\DeclareMathOperator{\Aut}{Aut}
\DeclareMathOperator{\Ind}{Ind}
\DeclareMathOperator{\lt}{lt}
\newcommand\tZ{\underline{Z}}
\newcommand\II{\mathcal{I}}
\newcommand{\qo}{\mathcal{Q}}
\renewcommand\Re{\operatorname{Re}}
\let\Ii\II
\begin{document}
\title[Primitive ideals of groupoid \cs-algebras]{\boldmath The primitive ideals of some \'etale groupoid \cs-algebras}
\author{Aidan Sims}
\address[A. Sims]{School of Mathematics and Applied Statistics\\
University of Wollongong \\
NSW 2522\\
Australia}

\email{asims@uow.edu.au}

\author{Dana P. Williams}
\address[D. Williams]{Department of Mathematics \\ Dartmouth College
  \\ Hanover, NH
03755-3551}

\email{dana.williams@Dartmouth.edu}

\subjclass[2010]{46L05 (primary); 46L45 (secondary)}

\keywords{$C^*$-algebra; primitive ideal; groupoid; irreducible representation}

\thanks{We thank Astrid an Huef for helpful conversations. This
  research was supported by the Australian Research Council, the
  Edward Shapiro fund at Dartmouth College, and the Simons
  Foundation.}

\date{\today}

\begin{abstract}
  Consider the Deaconu--Renault groupoid of an action
  of a finitely generated free abelian monoid by local homeomorphisms
  of a locally compact Hausdorff space. We catalogue the primitive ideals
  of the associated groupoid $C^*$-algebra. For a special class of
  actions we describe the Jacobson topology.
\end{abstract}

\maketitle


\section{Introduction}
\label{sec:introduction}

Describing the primitive-ideal space of a $C^*$-algebra is typically quite difficult, but
for crossed products of $C_0(X)$ by abelian groups $G$, a very satisfactory description
is available: for each point $x \in X$ and for each character $\chi$ of $G$ there is an
irreducible representation of the crossed product on $L^2(G\cdot x)$. The map which sends
$(x,\chi)$ to the kernel of this representation is a continuous open map from $X \times
\widehat{G}$ to the primitive-ideal space of $C_0(X) \rtimes G$, and it carries
$(x,\chi)$ and $(y,\rho)$ to the same ideal precisely when $\overline{G\cdot x} =
\overline{G\cdot y}$ and $\chi$ and $\rho$ restrict to the same character of the
stability subgroup $G_x = \set{g : g\cdot x = x}$ \cite{wil:crossed}*{Theorem~8.39}.

Regarding $C_0(X) \rtimes G$ as a groupoid $C^*$-algebra leads to a
natural question: what can be said about the primitive-ideal
  spaces of $C^*$-algebras of Deaconu--Renault groupoids of semigroup
  actions by local homeomorphisms? Examples of groupoids of this sort
  arise from the $\N$-actions by the shift map on the infinite-path
  spaces of row-finite directed graphs $E$ with no sources. The
primitive-ideal spaces of the associated graph $C^*$-algebras were
described by Hong and Szyma\'nski \cite{honszy:jmsj04} building on
Huef and Raeburn's description of the primitive-ideal space of a
Cuntz--Krieger algebra \cite{huerae:etds97}. The description given in
\cite{honszy:jmsj04} is in terms of the graph rather than its
groupoid. Recasting their results in groupoid terms yields a map from
$E^\infty \times \T$ to the primitive-ideal space of $C^*(E)$
along more or less the same lines as described above for group
  actions. But this map is not necessarily open, and the equivalence
relation it induces on $E^\infty \times \T$ is complicated by the fact
that orbits with the same closure need not have the same isotropy in
$\Z^k$.

The complications become greater still when $\N$ is replaced with
  $\N^k$, and the resulting class of $C^*$-algebras is
  substantial. For example, it contains the $C^*$-algebras of graphs
  \cite{kprr:jfa97} and $k$-graphs \cite{kumpas:nyjm00} and
  their topological generalisations \cites{yee:cm06,
    yee:jot07}. However, the results of \cite{ckss:fja14} for
  higher-rank graph algebras suggest that a satisfactory description
of the primitive-ideal spaces of Deaconu--Renault groupoids of $\N^k$
actions might be achievable. Here we take a substantial first step by
producing a complete catalogue of the primitive ideals of the
$C^*$-algebra $C^*(G_T)$ of the Deaconu--Renault groupoid associated
to an action $T$ of $\N^k$ by local homeomorphisms of a locally
compact Hausdorff space $X$. Specifically, there is a surjection
$(x,z) \mapsto I_{x,z}$ from $X \times \T^k$ to
$\Prim(C^*(G_T))$. Moreover, $I_{x,z}$ and $I_{x', z'}$ coincide if
and only if the orbits of $x$ and $x'$ under $T$ have the same closure
and $z$ and $z'$ determine the same character of the interior of the
isotropy of the reduction of $G_T$ to this orbit closure. For a very
special class of actions $T$ we are also able to describe the topology
of the primitive-ideal space of $\cs(G_T)$, but in general we can say
little about it. Indeed, graph-algebra examples show that any
  general description will require subtle adjustments to the
  ``obvious'' quotient topology.

The paper is organised as follows. In Section~\ref{sec:preliminaries}
we establish our conventions for groupoids, and prove that if $G$ is
an \'etale Hausdorff groupoid and the interior $\intiso(G)$ of its
isotropy subgroupoid is closed as well as open, then the natural
quotient $G/\intiso(G)$ is also a Hausdorff \'etale groupoid and there
is a natural homomorphism of $\cs(G)$ onto $\cs(G/\intiso(G))$.

In Section~\ref{sec:deac-rena-group} we consider the Deaconu--Renault
groupoids $G_T$ associated to actions $T$ of $\N^k$ by local
homeomorphisms of locally compact spaces $X$. We state our main
theorem about the primitive ideals of $\cs(G_T)$, and begin its
proof. We first show that $G_T$ is always amenable. We then consider
the situation where $\N^k$ acts irreducibly on $X$. We show that there
is then an open $\N^k$-invariant subset $Y \subset X$ on which the
isotropy in $\N^k \times \N^k$ is maximal. For this set $Y$,
$\intiso(G_T|_Y)$ is closed. We finish
Section~\ref{sec:deac-rena-group} by showing that restriction gives a
bijection between irreducible representations of $\cs(G_T)$ that are
faithful on $C_0(X)$ and irreducible representations of $\cs(G_T|_Y)$
that are faithful on $C_0(Y)$. Our arguments in this section are
special to $\N^k$, and make use of techniques developed
in~\cite{ckss:fja14}.

In Section~\ref{sec:prim-ideal-space} we show that if the subspace $Y$
from the preceding paragraph is all of $X$, then $\cs(G_T)$ is an
induced algebra---associated to the canonical action of $\T^k$ on
$\cs(G_T)$---with fibres $\cs(G_T/\intiso(G_T))$. We use this
description to give a complete characterisation of $\Prim(\cs(G_T))$
as a topological space under the rather strong hypothesis that the
reduction of $G_T/\intiso(G_T)$ to any closed $G_T$-invariant subset
of $Y$ is topologically principal. In
Section~\ref{sec:prim-ideals-csg_t} we complete the proof of our main
theorem. The fundamental idea is that for every irreducible
representation $\rho$ of $\cs(G_T)$ there is a set $Y = Y_\rho$ as
above and an element $z = z_\rho \in \T^k$ for which $\rho$ factors
through an irreducible representation of $\cs(G_T|_Y)$ that is
faithful on $C_0(Y)$ and which in turn factors through evaluation (in
the induced algebra) at $z$.

\subsection*{Standing assumptions}
\label{sec:standing-assumptions}

Throughout this paper, all topological spaces (including topological
groupoids) are second countable, and all groupoids are Hausdorff. By a
homomorphism between $C^*$-algebras, we mean a $*$-homomorphism, and
by an ideal of a $C^*$-algebra we mean a closed, 2-sided ideal. We
take the convention that $\N$ is a monoid under addition, so it
includes~0.

\section{Preliminaries}
\label{sec:preliminaries}

Let $G$ be a locally compact second-countable Hausdorff groupoid with
a Haar system.  For subsets $A, B \subset G$, we write
\begin{equation*}
  AB:=\sset{\alpha\beta\in G:(\alpha,\beta)\in (A\times B)\cap G^{(2)}}.
\end{equation*}
We use the standard groupoid conventions that $G^x = r^{-1}(x)$, $G_x
= s^{-1}(x)$, and $G^x_x = G^x \cap G_x$ for $x \in \go$. If $K\subset
\go$, then the restriction of $G$ to $K$ is the subgroupoid $G\restr
K=\sset{\gamma\in G:r(\gamma),s(\gamma)\in K}$. We will be
particularly interested in the \emph{isotropy subgroupoid}
\begin{equation*}
  \Iso(G)=\sset{\gamma\in G:r(\gamma)=s(\gamma)}=\bigcup_{x\in\go} G^x_x.
\end{equation*}
This $\Iso(G)$ is closed in $G$ and is a group bundle over $\go$.

A groupoid $G$ is \emph{topologically principal} if the units with trivial isotropy are
dense in $\go$.  That is, $\overline{\sset{x\in \go: G^x_x = \sset x}}=\go$. It is worth
pointing out that the condition we are here calling topologically principal has gone
under a variety of names in the literature and that those names have not been used
consistently (see \cite{bcfs:sf14}*{Remark~2.3}).

Recall that $\go$ is a left $G$-space: $\gamma\cdot s(\gamma)=r(\gamma)$.  If $x\in\go$,
then $G\cdot x=r(G_{x})$ is called the \emph{orbit} of $x$ and is denoted by $[x]$.  A
subset $A$ of $\go$ is called \emph{invariant} if $G\cdot A\subset A$.  The quotient
space $G\backslash \go$ (with the quotient topology) is called the orbit space. The
quasi-orbit space $\qo(G)$ of a groupoid $G$ is the quotient of $G\backslash \go$ in
which orbits are identified if they have the same closure.  Alternatively it is the
$T_{0}$-ization of orbit space $G\backslash \go$ (see
\cite{wil:crossed}*{Definition~6.9}). In particular, the quasi-orbit space has the
quotient topology coming from the quotient map $q:\go\to\qo(G)$.

An ideal $I\vartriangleleft C_{0}(\go)$ is called \emph{invariant} if
the corresponding closed set
\begin{equation*}
  C_{I}:=\sset{x\in \go:\text{$f(x)=0$ for all $f\in I$}}
\end{equation*}
is invariant.  If $M$ is a representation of $C_{0}(\go)$ with kernel
$I$, then $C_{I}$ is called the \emph{support of $M$}. We say $C_{I}$
is $G$-irreducible if it is not the union of two proper closed
invariant sets.  For example, orbit closures, $\overline{[x]}$, are
always $G$-irreducible.
\begin{lemma} \label{lem-g-irr} Let $G$ be a second-countable locally
  compact groupoid.  A closed invariant subset $C$ of $\go$ is
  $G$-irreducible if and only if there exists $x\in\go$ such that
  $C=\overline{[x]}$.
\end{lemma}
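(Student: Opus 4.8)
The backward direction (that each orbit closure $\overline{[x]}$ is $G$-irreducible) is essentially asserted already in the text, so the real content is the forward direction: given a $G$-irreducible closed invariant set $C$, produce a single point $x$ whose orbit closure is all of $C$. The natural strategy is a Baire category argument. The space $C$, as a closed subset of the second-countable locally compact Hausdorff space $\go$, is itself a Baire space. I want to find a point $x \in C$ whose orbit is dense in $C$; then invariance and closedness of $C$ force $\overline{[x]} = C$.

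**First I would set up the forward direction.** Suppose $C$ is closed, invariant, and $G$-irreducible. For each point $x \in C$, the orbit closure $\overline{[x]}$ is a closed invariant subset of $C$. I claim that $G$-irreducibility implies there is some $x$ with $\overline{[x]} = C$. To exploit second countability, fix a countable basis $\{U_n\}_{n\in\N}$ for the topology of $\go$, and consider those basic open sets that meet $C$. For each such $U_n$, the set $V_n := \overline{G\cdot(U_n \cap C)}$ (the closure in $C$ of the saturation of $U_n\cap C$) is a closed invariant set. The complement $C \setminus G\cdot(U_n\cap C)$ is also invariant, and its closure is a proper closed invariant subset whenever $U_n \cap C \neq \emptyset$. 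The point $x$ I seek should lie in the intersection
\begin{equation*}
  \bigcap_{n : U_n \cap C \neq \emptyset} G\cdot (U_n \cap C),
\end{equation*}
since membership in this set means the orbit of $x$ meets every basic open set hitting $C$, i.e.\ $[x]$ is dense in $C$.

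**The heart of the argument, and the main obstacle, is showing this intersection is nonempty.** Each set $G\cdot(U_n\cap C)$ is invariant and its closure is all of $C$: if its closure were a proper closed invariant subset, then $C$ would be the union of that closure with the closure of the complementary invariant set, contradicting $G$-irreducibility. So each $\overline{G\cdot(U_n\cap C)} = C$, meaning $G\cdot(U_n\cap C)$ is dense in $C$. The delicate point is upgrading ``dense'' to ``dense $G_\delta$'' so that the Baire category theorem applies to the countable intersection. Here I would use that $G$ is étale (or more generally open with a Haar system, so that $r$ is an open map): the saturation $G\cdot(U_n\cap C) = r(s^{-1}(U_n\cap C))$ is then an \emph{open} subset of $\go$ relative to $C$, because $r$ is open and $s$ is continuous. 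A dense open set is in particular a dense $G_\delta$, so by Baire category in the locally compact Hausdorff (hence Baire) space $C$, the countable intersection is a dense $G_\delta$ and therefore nonempty.

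**Finally I would close the loop.** Pick any $x$ in that intersection. Then $[x]$ meets every basic open set meeting $C$, so $[x]$ is dense in $C$; since $[x]\subset C$ and $C$ is closed, $\overline{[x]} = C$. For the converse direction, I note that if $C = \overline{[x]}$ and $C = A \cup B$ with $A,B$ proper closed invariant subsets, then $x$ lies in one of them, say $A$; since $A$ is closed and invariant it contains $\overline{[x]} = C$, forcing $A = C$ and contradicting properness. Hence $\overline{[x]}$ is $G$-irreducible, completing the equivalence. The only subtlety I anticipate needing care with is confirming that openness of $r$ (from the étale or Haar-system hypothesis) genuinely makes the saturations open in the subspace $C$; this is where the standing étale assumption does the real work.
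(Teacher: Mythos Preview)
Your argument is correct and follows the same Baire-category strategy as the paper, but where the paper simply cites the lemma preceding \cite{gre:am78}*{Corollary~19} together with the observation that the orbit space $G\backslash\go$ is the continuous open image of $G$ (hence totally Baire), you carry out the argument explicitly inside $C$ itself. The substance is the same: irreducibility forces each saturated basic open set to be dense, and Baire category then produces a point with dense orbit. Your version is more self-contained; the paper's is more economical by delegating to Green.

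Two small remarks. First, the openness of $r$ that you flag at the end is available here not from an \'etale hypothesis but from the standing Haar-system assumption at the start of Section~\ref{sec:preliminaries}; with it, one has $G\cdot(U_n\cap C)=(G\cdot U_n)\cap C$ by invariance of $C$, which is what makes the saturation relatively open---you might state this identity rather than leaving it as a concern. Second, in your density step the ``complementary invariant set'' $C\setminus G\cdot(U_n\cap C)$ is already closed (being the complement of an open set), so there is no need to take a further closure; the decomposition $C=\overline{G\cdot(U_n\cap C)}\cup\bigl(C\setminus G\cdot(U_n\cap C)\bigr)$ into two proper closed invariant sets then contradicts irreducibility directly.
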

\begin{proof}
  It suffices to see that every closed $G$-invariant set is an orbit
  closure.  This is a straightforward consequence of the lemma
  preceding \cite{gre:am78}*{Corollary~19} and the observation that
  the orbit space $G\backslash \go$ is the continuous open image of
  $G$ and hence totally Baire.
\end{proof}

\begin{remark} \label{rem-not-minimal} We say that $C_{0}(\go)$ is
  \emph{$G$-simple} if it has no nonzero proper invariant ideals.  So
  $C_{0}(\go)$ is $G$-simple exactly when $\go$ has a dense orbit.
  This is much weaker than the notion of minimality, which requires
  that \emph{every} orbit is dense.
\end{remark}

We also want to refer to a couple of old chestnuts.  Recall that there
is a nondegenerate homomorphism
\begin{equation*}
  V:C_{0}(\go)\to M(\cs(G))
\end{equation*}
given on $f\in \cc(G)$ by
$\big(V(\phi)f\big)(\gamma)=\phi(r(\gamma))f(\gamma)$. In particular,
if $L$ is a nondegenerate representation of $\cs(G)$, then we obtain
an associated representation $M$ of $C_{0}(\go)$ by extension:
$M(\phi)=\bar L(V(\phi))$.  The next result is standard.  A proof in
the case where $G$ is principal can be found in
\cite{cla:jot07}*{Lemma~3.4 and Proposition~3.2}, and the proof goes
through in general \emph{mutatis mutandis}.

\begin{prop} \label{prop-invar-ideals} Let $G$ be a second-countable
  locally compact groupoid with a Haar system.  Let $L$ be a
  nondegenerate representation of $\cs(G)$ with associated
  representation $M$ of $C_{0}(\go)$ as above.  Then $\ker M$ is
  invariant.  If $L$ is irreducible, then the support of $ M$ is
  $G$-irreducible.
\end{prop}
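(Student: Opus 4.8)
The plan is to pass to a concrete model for $L$ via Renault's disintegration theorem and reduce both assertions to standard facts about the support of a quasi-invariant measure. After disintegration we may assume $L$ is the integrated form of a representation built from a quasi-invariant measure $\mu$ on $\go$, a measurable Hilbert bundle $x\mapsto\mathcal H_x$, and a Borel representation $\pi$ of $G$ by unitaries $\pi_\gamma\colon\mathcal H_{s(\gamma)}\to\mathcal H_{r(\gamma)}$, with $L$ acting on $\mathcal H=L^2(\go*\mathcal H,\mu)$. Tracking the multiplier $V(\phi)$ through the integration process identifies the associated representation $M$ with the diagonal action $(M(\phi)\xi)(x)=\phi(x)\xi(x)$. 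Hence $\ker M=\set{\phi\in C_0(\go):\text{$\phi=0$ $\mu$-a.e.}}$ and the support of $M$ is $C_{\ker M}=\supp\mu$, so both claims become statements about $\supp\mu$.

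For the first claim I would show that the open set $U:=\go\setminus\supp\mu$ is invariant. Quasi-invariance of $\mu$ is equivalent to the assertion that a Borel set $N\subset\go$ is $\mu$-null if and only if its saturation $G\cdot N=r(s^{-1}(N))$ is $\mu$-null. Now $U$ is by definition the largest open $\mu$-null set; its saturation $G\cdot U$ is open (as $s^{-1}(U)$ is open and $r$ is an open map, the Haar system guaranteeing openness of $r$), it is $\mu$-null by the preceding characterization, and it contains $U$. Maximality of $U$ forces $G\cdot U=U$, so $U$ is invariant and therefore $\supp\mu=C_{\ker M}$ is invariant. This says exactly that $\ker M$ is an invariant ideal.

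For the second claim, assume $L$ is irreducible and, seeking a contradiction to $G$-irreducibility, write $\supp\mu=C_1\cup C_2$ with each $C_i$ a proper closed invariant subset. A point of $\supp\mu\setminus C_1$ lies in the open set $\go\setminus C_1$ and in $\supp\mu$, so $\mu(\go\setminus C_1)>0$; similarly a point of $\supp\mu\setminus C_2$ lies in $\go\setminus C_2$, and since $(\go\setminus C_2)\cap\supp\mu\subseteq C_1$ we get $\mu(C_1)\geq\mu(\go\setminus C_2)>0$. Thus $B_1:=\go\setminus C_1$ and $B_2:=C_1$ partition $\go$ into two invariant Borel sets of positive measure. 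Because the complement of an invariant set is again invariant, each $B_i$ satisfies $s(\gamma)\in B_i\iff r(\gamma)\in B_i$ for every $\gamma\in G$; substituting $\chi_{B_1}(s(\gamma))=\chi_{B_1}(r(\gamma))$ into the integration formula for $L(f)$ shows that the diagonal projection $\chi_{B_1}\in L^\infty(\mu)\subset\mathcal B(\mathcal H)$ commutes with every $L(f)$, $f\in\cc(G)$. Since $\mu(B_1)>0$ and $\mu(B_2)>0$, this projection is neither $0$ nor $1$, contradicting irreducibility. Hence $C_{\ker M}$ is $G$-irreducible.

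The measure-positivity estimates and the substitution into the integration formula are routine; note in particular that because $C_1$ and $C_2$ are genuinely, not merely almost-everywhere, invariant, the projection $\chi_{B_1}$ commutes with $L$ exactly, so no measure-theoretic hedging is needed in the second claim. I expect the real work to lie in the setup and the first claim: correctly invoking the disintegration theorem to obtain the concrete model and identify $M$, and then exploiting the measure-theoretic content of quasi-invariance, namely the equivalence of quasi-invariance with the statement that null sets have null saturations. That equivalence is the crux; once it and the diagonal form of $M$ are in hand, the rest is bookkeeping. This is essentially the point at which the present argument departs from, and streamlines, the principal-case computation of \cite{cla:jot07}.
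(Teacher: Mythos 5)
Your argument is correct and is essentially the proof the paper has in mind: the paper does not write one out but defers to \cite{cla:jot07}*{Lemma~3.4 and Proposition~3.2}, whose argument is precisely your reduction via the disintegration theorem to a quasi-invariant measure $\mu$, the identification of $C_{\ker M}$ with $\supp\mu$, and the fact that the (open) saturation of a $\mu$-null open set is $\mu$-null. The only caveats are cosmetic: normalise the disintegration so that the fibres $\mathcal{H}_x$ are nonzero for $\mu$-a.e.\ $x$ (otherwise $C_{\ker M}$ could be a proper subset of $\supp\mu$), and note that you only need the implication from quasi-invariance to ``null sets have null saturations,'' not the asserted equivalence.
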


\begin{prop} \label{prop-factors} Let $G$ be a second-countable
  locally compact groupoid with a Haar system.  Let $L$ be a
  nondegenerate representation of $\cs(G)$ with associated
  representation $M$ of $C_{0}(\go)$.  If $F$ is the support of $M$,
  then $L$ factors through $\cs(G\restr F)$.  In particular, if $L$ is
  irreducible, then $L$ factors through $\cs(G\restr{\overline{[x]}})$
  for some $x\in\go$.
\end{prop}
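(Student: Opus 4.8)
The plan is to exploit the standard extension of groupoid $\cs$-algebras attached to an invariant open set, together with the observation that the support of $M$ records exactly which functions on $\go$ are killed by $L$. Write $F$ for the support of $M$, so that $F=C_{\ker M}$ is closed and, by Proposition~\ref{prop-invar-ideals}, invariant; put $U=\go\setminus F$, an open invariant set. By the correspondence between ideals of $C_{0}(\go)$ and open subsets of $\go$, the equality $F=C_{\ker M}$ says precisely that $\ker M=C_{0}(U)$.

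First I would invoke the familiar short exact sequence
\begin{equation*}
  0\longrightarrow \cs(G\restr U)\longrightarrow \cs(G)\longrightarrow \cs(G\restr F)\longrightarrow 0,
\end{equation*}
in which $\cs(G\restr U)$ is realised as the closure of $\cc(G\restr U)$ inside $\cs(G)$ and the quotient map is induced by the restriction of functions $f\mapsto f\restr{G\restr F}$ on $\cc(G)$ (both $G\restr U$ and $G\restr F$ inherit Haar systems because $U$ and $F$ are invariant). Given this sequence, it suffices to prove that $\cs(G\restr U)\subseteq\ker L$, for then $L$ factors through the quotient $\cs(G)/\cs(G\restr U)\cong\cs(G\restr F)$.

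The link between the ideal and the representation $M$ is the multiplier homomorphism $V$. The key computation is that, because $U$ is invariant, $(V(\phi)f)(\gamma)=\phi(r(\gamma))f(\gamma)$ is supported in $G\restr U$ whenever $\phi\in C_{0}(U)$, and conversely every $f\in\cc(G\restr U)$ is recovered as $f=V(\phi)f$ by choosing $\phi\in\cc(U)$ equal to $1$ on the compact set $r(\supp f)$. This yields the identification
\begin{equation*}
  \cs(G\restr U)=\clsp\set{V(\phi)a:\phi\in C_{0}(U),\ a\in\cs(G)}.
\end{equation*}
Since $M(\phi)=\bar L(V(\phi))$ and $\ker M=C_{0}(U)$, we have $\bar L(V(\phi))=0$ for every $\phi\in C_{0}(U)$, whence $L(V(\phi)a)=\bar L(V(\phi))L(a)=0$; by continuity $L$ annihilates the displayed closed span, that is, $\cs(G\restr U)\subseteq\ker L$. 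This gives the first assertion.

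For the final sentence, suppose $L$ is irreducible. Proposition~\ref{prop-invar-ideals} shows that its support $F$ is $G$-irreducible, and Lemma~\ref{lem-g-irr} then gives $F=\overline{[x]}$ for some $x\in\go$, so $L$ factors through $\cs(G\restr{\overline{[x]}})$. The only genuine external input is the exact sequence above describing the ideal structure of $\cs(G)$ for invariant open sets; this is standard, so I expect the main obstacle to be the bookkeeping that identifies $\cs(G\restr U)$ with the closed span of the $V(\phi)a$ and confirms that restriction of functions really does realise the quotient as $\cs(G\restr F)$.
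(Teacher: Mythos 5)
Your proposal is correct and follows essentially the same route as the paper: both invoke the short exact sequence $0\to\cs(G\restr U)\to\cs(G)\to\cs(G\restr F)\to 0$ for the open invariant set $U=\go\setminus F$ (the paper cites Lemma~2.10 of \cite{mrw:tams96}) and then deduce the last assertion from Proposition~\ref{prop-invar-ideals} and Lemma~\ref{lem-g-irr}. The only difference is that you spell out, via the identification $\cs(G\restr U)=\clsp\set{V(\phi)a:\phi\in C_{0}(U),\ a\in\cs(G)}$, why $\ker L$ contains $\cs(G\restr U)$, a step the paper leaves implicit.
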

\begin{proof}
  Since $F$ a closed invariant set, $U:=\go\setminus F$ is open and
  invariant.  We have a short exact sequence
  \begin{equation*}
    \xymatrix{0\ar[r]&\cs(G\restr U)\ar[r]^{\iota}&
      \cs(G)\ar[r]^{R}&\cs(G\restr F)\ar[r]&0}
  \end{equation*}
  of \cs-algebras with respect to the natural maps coming from
  extension (by 0) and restriction of functions in $\cc(G)$
  \cite{mrw:tams96}*{Lemma~2.10}.  Since $M$ has support $F$, the
  kernel of $L$ contains the ideal corresponding to $\cs(G\restr U)$,
  so $L$ factors through $C^*(G|_F)$.

  The last assertion follows from Proposition~\ref{prop-invar-ideals}
  and Lemma~\ref{lem-g-irr}.
\end{proof}

When the range and source maps in a groupoid $G$ are open maps (in particular, when $G$
is \'etale), the multiplication map is also open: Fix open $A,B \subseteq G$ and
composable $(\alpha,\beta) \in A \times B$, and suppose that $\gamma_{i}\to \alpha\beta$.
Since $r$ is open, the $r(\gamma_i)$ eventually lie in $r(A)$; say $r(\gamma_i) =
r(\alpha_i)$ with $\alpha_i \in A$. Now $\alpha_i^{-1}\gamma_i \to \beta$, and since $B$
is open, the $\alpha_i^{-1}\gamma_i$ eventually belong to $B$, so that $\gamma_i =
\alpha_i (\alpha_i^{-1}\gamma_i)$ eventually belongs to $AB$; so $AB$ is open.

For the remainder of this note, we specialize to the situation where $G$ is \'etale.
Since $G$ is Hausdorff, this means that $\go$ is clopen in $G$ and that $r : G \to \go$
is a local homeomorphism. Hence counting measures form a continuous Haar system for $G$.
The $I$-norm on $\cc(G)$ is defined by
\[
\|f\|_I = \sup_{x \in \go} \max\Bigl\{\sum_{\gamma \in G_x}
|f(\gamma)|, \sum_{\gamma \in G^x} |f(\gamma)|\Bigr\}.
\]
The groupoid $C^*$-algebra $\cs(G)$ is the completion of $\cc(G)$ in
the norm $\|a\| = \sup\set{\pi(a) : \pi\text{ is an $I$-norm bounded
    $*$-representation}}$. For $x \in \go$ there is a representation
$L^x : \cs(G) \to B(\ell^2(G_x))$ given by $L^x(f)\delta_\gamma =
\sum_{s(\alpha) = r(\gamma)} f(\alpha)\delta_{\alpha\gamma}$. This is
called the (left-)regular representation associated to $x$. The
reduced groupoid $C^*$-algebra $\cs_r(G)$ is the image of $\cs(G)$
under $\bigoplus_{x \in \go} L^x$.

A \emph{bisection} in a groupoid $G$, also known as a $G$-set, is a set $U \subset G$
such that $r,s$ restrict to homeomorphisms on $U$. An important feature of \'etale
groupoids is that they have plenty of open bisections: Proposition~3.5 of
\cite{exe:bbms08} together with local compactness implies that the topology on an \'etale
groupoid has a basis of precompact open bisections.

If $G$ is \'etale, then the homomorphism $V : C_0(\go) \to M\cs(G)$ takes values in
$\cs(G)$ and extends the inclusion $\cc(\go) \hookrightarrow \cc(G)$ given by extension
of functions (by 0). We regard $C_0(\go)$ as a $*$-subalgebra of $\cs(G)$. If $L$ is a
representation of $\cs(G)$, then the associated representation $M$ of $C_{0}(\go)$ is
just the restriction of $L$ to $C_{0}(\go)$.  Thus $\ker M=\ker L \cap C_{0}(\go)$.

We write $\intiso(G)$ for the interior of $\Iso(G)$ in $G$.  Since $G$
is \'etale, $\go \subset \intiso(G)$ and $\intiso(G)$ is an open
\'etale subgroupoid of $G$.

\begin{prop}
  \label{prop-quotient-groupoid}
  Suppose that $G$ is a second-countable locally compact Hausdorff
  \'etale group\-oid such that $\intiso(G)$ is closed in $G$.
  \begin{enumerate}
  \item\label{it:intiso acts} The subgroupoid $\intiso(G)$ acts freely
    and properly on the right of $G$, and the orbit space
    $G/\intiso(G)$ is locally compact and Hausdorff.
  \item\label{it:intiso conjugation} For each $\gamma\in G$, the map
    $\alpha\mapsto \gamma\alpha\gamma^{-1}$ is a bijection from
    $\intiso(G)_{s(\gamma)}$ onto $\intiso(G)_{r(\gamma)}$.
  \item\label{it:intiso normal} For each $x \in \go$, the set
    $\intiso(G)_x$ is a normal subgroup of $G^x_x$.
  \item\label{it:G/intiso} The set $G/\intiso(G)$ is a locally compact Hausdorff
      \'etale groupoid with respect to the operations $[\gamma]^{-1}=[\gamma^{-1}]$
      for $\gamma \in G$, and $[\gamma][\eta]=[\gamma\eta]$ for $(\gamma, \eta) \in
      G^{(2)}$. The corresponding range and source maps are given by $r'([\gamma]) =
      r(\gamma)$ and $s'([\gamma]) = s(\gamma)$.
  \item\label{it:quotient TP} The groupoid $G/\intiso(G)$ is topologically principal.
  \item\label{it:quotient amenable} If $G$ is amenable, then so is
    $G/\intiso(G)$.
  \end{enumerate}
\end{prop}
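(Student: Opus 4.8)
The plan is to establish the conjugation statements first, since the groupoid structure on the quotient depends on them, then the proper action, and finally assemble the remaining assertions. Write $Q := G/\intiso(G)$ and let $q\colon G\to Q$ be the quotient map. For (b) and (c), fix $\gamma\in G$ and choose an open bisection $B\ni\gamma$ (these exist by the basis of precompact open bisections noted above). Using $B$ to select, for each $\beta$, the elements $p,q\in B$ determined by $s(p)=r(\beta)$ and $s(q)=s(\beta)$, the assignment $\beta\mapsto p\beta q^{-1}$ is a homeomorphism of $G\restr{s(B)}$ onto $G\restr{r(B)}$ that carries isotropy to isotropy and agrees with $\alpha\mapsto\gamma\alpha\gamma^{-1}$ on the fibre over $s(\gamma)$. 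Being a homeomorphism, it sends the open set $\intiso(G)$ into $\intiso(G)$; applying the same to $\gamma^{-1}$ shows $\alpha\mapsto\gamma\alpha\gamma^{-1}$ is a bijection $\intiso(G)_{s(\gamma)}\to\intiso(G)_{r(\gamma)}$, giving (b). For (c), $\intiso(G)_x\subseteq\Iso(G)_x=G^x_x$ is a subgroup because $\intiso(G)$ is an open subgroupoid, and normality is the case $\gamma\in G^x_x$ of (b).

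For (a), freeness is immediate, since $\gamma\alpha=\gamma$ forces $\alpha=s(\gamma)\in\go$. Properness is where the hypothesis that $\intiso(G)$ is closed does the essential work, and I expect it to be the main obstacle. I would show directly that $(\gamma,\alpha)\mapsto(\gamma\alpha,\gamma)$ is proper: given compact $K_1,K_2\subseteq G$, any pair with $\gamma\in K_1$ and $\gamma\alpha\in K_2$ satisfies $\alpha\in K_1^{-1}K_2\cap\intiso(G)$, where $K_1^{-1}K_2$ is compact as the continuous image of a compact subset of $G^{(2)}$ and intersecting with the \emph{closed} set $\intiso(G)$ keeps it compact, so the preimage is a closed subset of a compact set. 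A free and proper action of a locally compact Hausdorff groupoid has closed orbit equivalence relation and open quotient map, whence $Q$ is locally compact and Hausdorff.

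For (d), well-definedness of the product is the computation, using (b), that if $\gamma'=\gamma\alpha$ and $\eta'=\eta\beta$ with $\alpha,\beta\in\intiso(G)$ then $\gamma'\eta'=\gamma\eta\,(\eta^{-1}\alpha\eta)\beta$ with $(\eta^{-1}\alpha\eta)\beta\in\intiso(G)_{s(\eta)}$, so $[\gamma'\eta']=[\gamma\eta]$; inversion is analogous, and $r,s$ descend because $\intiso(G)$-translation fixes them. Continuity of the operations follows from that on $G$ together with openness of $q$. For the étale property I would note that if $B\ni\gamma$ is an open bisection then $r'$ is injective on the open set $q(B)$ (as $r$ is injective on $B$), hence a homeomorphism there, while $q\restr{\go}$ is a homeomorphism onto $Q^{(0)}$.

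For (e) I would show $Q$ is effective and invoke the standard equivalence, valid for second-countable étale groupoids, between effectiveness and topological principality \cite{bcfs:sf14}. Effectiveness means $\Iso(Q)^{\circ}=Q^{(0)}$: since $q^{-1}(\Iso(Q))=\Iso(G)$, any $[\gamma]$ in the interior of $\Iso(Q)$ has, as $q^{-1}$ of an isotropy neighbourhood, an open subset of $\Iso(G)$ through $\gamma$, so $\gamma\in\intiso(G)$; but every $\alpha\in\intiso(G)_x$ is $\intiso(G)$-equivalent to $s(\alpha)$, whence $q(\intiso(G))=Q^{(0)}$ and $[\gamma]$ is a unit. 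Finally, for (f) I would transport an approximate invariant continuous mean $(m^x_i)$ for $G$ to $Q$ by pushforward, $n^x_i:=q_*m^x_i$. Properness from (a) guarantees $q_*\cc(G)\subseteq\cc(Q)$, so the $n^x_i$ form a continuous system of probability measures; since $q$ intertwines the actions and pushforward is $\ell^1$-contractive, $\|[\gamma]n^{s(\gamma)}_i-n^{r(\gamma)}_i\|_1\le\|\gamma m^{s(\gamma)}_i-m^{r(\gamma)}_i\|_1$, and lifting compacta of $Q$ through the open map $q$ turns uniform convergence on compacta for $G$ into the same for $Q$, so $Q$ is amenable. Beyond the properness in (a), the one other genuinely non-formal step is verifying $q_*\cc(G)\subseteq\cc(Q)$ and the pushforward estimate in (f); everything else is bookkeeping built on (a)--(c).
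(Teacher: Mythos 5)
Your proposal is correct, and for parts (a)--(e) it travels essentially the same road as the paper: the paper also proves (b) by conjugating a small open bisection $U$ through a bisection $V$ containing $\gamma$ and observing that $VUV^{-1}$ is an open subset of $\Iso(G)$ (you package the same idea as a homeomorphism $G\restr{s(B)}\to G\restr{r(B)}$ carrying relative interiors to relative interiors, which is fine); it deduces (c) as the case $\gamma\in G^x_x$; it gets (a) by citing the general theory of free and proper actions via \cite{muhwil:jams04}*{Corollary~2.3}, where you verify properness by hand with $K_1^{-1}K_2\cap\intiso(G)$ --- a worthwhile explicit check, and indeed the one place the closedness hypothesis does real work; it proves well-definedness in (d) by the identical computation $\gamma'\eta'=\gamma\eta(\eta^{-1}\alpha\eta\beta)$ and handles continuity and the \'etale property using openness of $q$; and it proves (e), exactly as you do, by showing that the interior of $\Iso(G/\intiso(G))$ is $q(\go)$ and then invoking \cite{bcfs:sf14}*{Lemma~3.1}. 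The genuine divergence is (f): the paper dispatches amenability in two lines by noting that $r=r'\circ q$ is an amenable map when $G$ is amenable and citing \cite{anaren:amenable00}*{Proposition~2.2.4} to conclude that $r'$ is amenable, whereas you push an approximate invariant continuous mean forward along $q$. Your construction does go through --- in the \'etale picture the density of $q_*m^x_i$ on $Q^x$ is precisely $\qs(g_i)$ where $\qs$ is the fibre-sum map of Proposition~\ref{prop-quotient-map} and $g_i$ is the density of $m^x_i$, equivariance of $q$ gives the $\ell^1$ contraction, and openness of $q$ lets you lift compacta of the quotient --- but it costs you the verification that fibre-sums of $\cc(G)$ land in $\cc(G/\intiso(G))$ (which the paper needs anyway for Proposition~\ref{prop-quotient-map}) and an appeal to the characterisation of topological amenability by such means. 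What your hands-on version buys is independence from the formalism of amenable maps; what the paper's version buys is brevity.
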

\begin{proof}
  (\ref{it:intiso acts}) Since $\intiso(G)$ is closed in $G$, it acts
  freely and properly on the right of $G$.  Hence the orbit space is
  locally compact and Hausdorff by
  \cite{muhwil:jams04}*{Corollary~2.3}.

  (\ref{it:intiso conjugation}) Conjugation by $\gamma$ is a
  multiplicative bijection of $\Iso(G)_{s(\gamma)}$ onto
  $\Iso(G)_{r(\gamma)}$.  So it suffices to show that
  \begin{equation}
    \label{eq:1}
    \gamma\intiso(G)\gamma^{-1}\subset \intiso(G)\quad\text{ for all $\gamma \in G$.}
  \end{equation}
  Take $\alpha\in \intiso(G)$ such that $s(\gamma)=r(\alpha)$ and let
  $U$ be an open neighborhood of $\alpha$ in $\intiso(G)$.  Let $V$ be
  an open neighborhood of $\gamma$.  Since $G$ is \'etale, we can
  assume that $U$ and $V$ are bisections with $s(V)=r(U)$.  Since
    the product of open subsets of $G$ is open, $VUV^{-1}$ is an open neighborhood of
  $\gamma\alpha\gamma^{-1}$.  Since $U$ and $V$ are bisections and $U$
  consists of isotropy, $VUV^{-1}$ is contained in $\Iso(G)$.  Hence
  $\gamma\alpha\gamma^{-1}\in\intiso(G)$.

  (\ref{it:intiso normal}) Follows from~(\ref{it:intiso conjugation})
  applied with $\gamma \in \Iso(G)_x$.

  (\ref{it:G/intiso}) The maps $r'$ and $s'$ are clearly well defined.
  Suppose that $(\gamma,\eta)\in G^{(2)}$ and that
  $\gamma'=\gamma\alpha$ and $\eta'=\eta\beta$ with
  $\alpha,\beta\in\intiso(G)$.  Then $\gamma'\eta'
  =\gamma\eta(\eta^{-1}\alpha\eta\beta)$.  But
  $\eta^{-1}\alpha\eta\beta \in \intiso(G)$ by $(b)$.  Hence
  $[\gamma'\eta']=[\gamma\eta]$.  This shows that multiplication is
  well-defined.  A similar argument shows that inversion is
  well-defined.  Since the quotient map is open
  \cite{muhwil:plms395}*{Lemma~2.1}, it is not hard to see that these
  operations are continuous.  For example, suppose that
  $[\gamma_{i}]\to[\gamma]$ and $[\eta_{i}]\to [\eta]$ with
  $(\gamma_{i},\eta_{i})\in G^{(2)}$.  It suffices to see that every
  subnet of $[\gamma_{i}\eta_{i}]$ has a subnet converging to
  $[\gamma\eta]$.  But after passing to a subnet, relabeling, and
  passing to another subnet and relabeling, we can assume that there
  are $\alpha_{i},\beta_{i}\in\intiso(G)$ such that
  $\gamma_{i}\alpha_{i}\to \gamma$ and $\eta_{i}\beta_{i}\to \eta$ in
  $G$ (see \cite{wil:crossed}*{Proposition~1.15}).  But then
  $\gamma_{i}\alpha_{i}\eta_{i}\beta_{i}\to \gamma\eta$, and so
  $[\gamma_{i}\eta_{i}]\to[\gamma\eta]$.

  We still need to see that $G/\intiso(G)$ is \'etale.  Its unit space
  is the image of $\go$ which is open since the quotient map is open.
  So it suffices to show that $r'$ is a local homeomorphism.  Given
  $[\gamma] \in G/\intiso(G)$, choose a compact neighborhood $K$ of
  $\gamma$ in $G$ such that $r\restr K$ is a homeomorphism.  Let $q :
  G \to G/\intiso(G)$ be the quotient map.  Then $q(K)$ is a compact
  neighborhood of $[\gamma]$ and $r'$ is a continuous bijection, and
  hence a homeomorphism, of $q(K)$ onto its image.

  (\ref{it:quotient TP}) Take $b\in G/\intiso(G)$ such that
  $r'(b)=s'(b)$ but $b\not=r'(b)$.  (That is, $b\in \Iso(G/\intiso(G))
  \setminus q(\go)$, but the notation is a bit overwhelming.)  It
  follows that $b=q(\gamma)$ for some $\gamma\in \Iso(G)\setminus
  \intiso(G)$.  Let $U$ be a open neighborhood of $b$. Then
  $q^{-1}(U)$ is an open neighborhood of $\gamma$, so meets $G
  \setminus \Iso(G)$.  Take $\delta\in q^{-1}(U)\setminus \Iso(G)$; so
  $s(\delta)\not= r(\delta)$.  Then $q(\delta)\in U$ and
  $r'(q(\delta))\not = s'(q(\delta))$.  In particular, $q(\delta)$
  does not belong to the interior of the isotropy of the groupoid
  $G/\intiso(G)$.  Thus the interior of the isotropy of $G/\intiso(G)$
  is just $q(\go)$.  Now (\ref{it:quotient TP}) follows from
  \cite{bcfs:sf14}*{Lemma~3.1}.

  (\ref{it:quotient amenable}) To see that $G/\intiso(G)$ is amenable,
  we need to see that $r'$ is an amenable map (see
  \cite{anaren:amenable00}*{Definition~2.2.8}).  If $G$ itself is
  amenable, then $r=r'\circ q$ is amenable.  Thus $r'$ is amenable by
  \cite{anaren:amenable00}*{Proposition~2.2.4}.
\end{proof}

Our analysis of primitive ideals in $C^*$-algebras of Deaconu--Renault groupoids $G$ will
hinge on realising $\cs(G)$ as an induced algebra with fibres $\cs(G/\intiso(G))$. The
first step towards this is to construct a homomorphism $\cs(G) \to \cs(G/\intiso(G))$,
which can be done in much greater generality.

\begin{prop}
  \label{prop-quotient-map}
  Let $G$ be a locally compact Hausdorff \'etale groupoid such that
  $\intiso(G)$ is closed in $G$.  There is a $\cs$-homomorphism
  $\qs:\cs(G)\to \cs(G/\intiso(G))$ such that
  \begin{equation*}
    \qs(f)(b)=\sum_{q(\gamma)=b} f(\gamma)\quad\text{for $f\in \cc(G)$
      and $b\in G/\intiso(G)$.}
  \end{equation*}
\end{prop}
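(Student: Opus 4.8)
The plan is to define $\qs$ first on the dense $*$-subalgebra $\cc(G)$ by the stated formula, to check that it is a $*$-homomorphism into $\cc(G/\intiso(G))$, to establish the $I$-norm estimate $\|\qs(f)\|_{I}\le\|f\|_{I}$, and finally to invoke the universal property of the full groupoid $\cs$-algebra to extend $\qs$ to the completions. The only genuinely delicate step is the last estimate, which is precisely what upgrades an algebraic map of convolution algebras to a $\cs$-homomorphism.

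First I would check that the formula makes sense and lands in $\cc(G/\intiso(G))$. For fixed $b=q(\gamma_{0})$ the fibre $q^{-1}(b)$ is the single orbit $\gamma_{0}\intiso(G)_{s(\gamma_{0})}$; since $G$ is \'etale the isotropy group $\intiso(G)_{s(\gamma_{0})}$ is discrete, and since $\intiso(G)$ acts freely and properly (Proposition~\ref{prop-quotient-groupoid}\partref{1}) this orbit is a closed discrete subset of $G$. Hence $q^{-1}(b)\cap\supp f$ is finite and the defining sum has only finitely many nonzero terms. For continuity and compact support I would use that $q$ is a local homeomorphism: it is open by \cite{muhwil:plms395}*{Lemma~2.1}, and it is injective on any open bisection $U$, because if $q(\gamma)=q(\gamma')$ with $\gamma,\gamma'\in U$ then $\gamma'=\gamma\alpha$ for some $\alpha\in\intiso(G)$, whence $r(\gamma')=r(\gamma)$ and so $\gamma=\gamma'$ as $r\restr U$ is injective. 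Covering $\supp f$ by finitely many precompact open bisections on which $q$ restricts to a homeomorphism onto an open set and taking a subordinate partition of unity reduces matters to the case where $f$ is supported in a single such bisection $U$; there $\qs(f)$ agrees with $f\circ(q\restr U)^{-1}$ on the open set $q(U)$ and vanishes outside the compact set $q(\supp f)$, so it is continuous with compact support.

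Next I would verify that $\qs$ is $*$-preserving and multiplicative on $\cc(G)$. The involution is immediate: since $q(\gamma)^{-1}=q(\gamma^{-1})$, the map $\gamma\mapsto\gamma^{-1}$ carries $q^{-1}(b)$ bijectively onto $q^{-1}(b^{-1})$, so $\qs(f^{*})(b)=\sum_{q(\gamma)=b}\overline{f(\gamma^{-1})}=\overline{\qs(f)(b^{-1})}=\qs(f)^{*}(b)$. For convolution I would expand both sides and identify them with a common finite sum. Using the formulas $r'([\alpha])=r(\alpha)$ and $s'([\alpha])=s(\alpha)$ from Proposition~\ref{prop-quotient-groupoid}\partref{4}, a composable pair $(c,d)$ in $G/\intiso(G)$ with $cd=b$ together with choices $q(\alpha)=c$, $q(\beta)=d$ corresponds exactly to a composable pair $(\alpha,\beta)\in G^{(2)}$ with $q(\alpha\beta)=b$ (composability in the quotient forces $s(\alpha)=r(\beta)$). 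Consequently both $(\qs f * \qs g)(b)=\sum_{cd=b}\qs(f)(c)\qs(g)(d)$ and $\qs(f*g)(b)=\sum_{q(\gamma)=b}\sum_{\alpha\beta=\gamma}f(\alpha)g(\beta)$ equal $\sum f(\alpha)g(\beta)$ taken over all composable pairs with $q(\alpha\beta)=b$; the compact supports of $f$ and $g$ make this sum finite, so the rearrangement is legitimate.

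The main obstacle is the boundedness required for the extension, and the crux is the estimate $\|\qs(f)\|_{I}\le\|f\|_{I}$. The unit space of $G/\intiso(G)$ is $q(\go)$, with $q\restr\go$ injective, so I may index units by $x\in\go$, and then $(G/\intiso(G))_{x}=q(G_{x})$ and $(G/\intiso(G))^{x}=q(G^{x})$. The key observation is that the $\intiso(G)$-orbit of $\gamma$ is $\gamma\,\intiso(G)_{s(\gamma)}$, and multiplying $\gamma$ on the right by an element of $\intiso(G)_{s(\gamma)}$ (which is isotropy at $s(\gamma)$) preserves both source and range; hence $q^{-1}(q(G_{x}))=G_{x}$ and $q^{-1}(q(G^{x}))=G^{x}$, and the fibres of $q$ partition $G_{x}$ and $G^{x}$ respectively. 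Therefore
\[
\sum_{b\in q(G_{x})}|\qs(f)(b)|\le\sum_{b\in q(G_{x})}\ \sum_{q(\gamma)=b}|f(\gamma)|=\sum_{\gamma\in G_{x}}|f(\gamma)|\le\|f\|_{I},
\]
and the same bound holds with $G^{x}$ in place of $G_{x}$; taking the supremum over $x$ gives $\|\qs(f)\|_{I}\le\|f\|_{I}$. Feeding this into the universal property finishes the argument: for any $I$-norm bounded representation $\sigma$ of $\cc(G/\intiso(G))$, the composite $\sigma\circ\qs$ is an $I$-norm bounded representation of $\cc(G)$ with the same bound, so $\|\sigma(\qs(f))\|\le\|f\|_{\cs(G)}$; taking the supremum over all such $\sigma$ yields $\|\qs(f)\|_{\cs(G/\intiso(G))}\le\|f\|_{\cs(G)}$. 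Thus $\qs$ is contractive on $\cc(G)$ and extends uniquely, by continuity, to the asserted $\cs$-homomorphism $\cs(G)\to\cs(G/\intiso(G))$.
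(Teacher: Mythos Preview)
Your argument is correct. The paper proceeds somewhat differently at two points. For the fact that the formula defines a map $\cc(G)\to\cc(G/\intiso(G))$, the paper simply invokes \cite{mrw:jot87}*{Lemma~2.9(b)}, whereas you supply a direct argument using that $q$ is a local homeomorphism on open bisections and a partition of unity. More substantively, for the extension to the completions the paper argues that $\qs$ is continuous in the inductive-limit topology (citing \cite{muhwil:nyjm08}*{Corollary~2.17}) and then uses that the inductive-limit topology dominates the $\cs$-norm topology; you instead prove the explicit estimate $\|\qs(f)\|_{I}\le\|f\|_{I}$ by observing that the fibres of $q$ partition each $G_{x}$ and each $G^{x}$, and then feed this into the universal property. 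Your route is more self-contained and makes the mechanism of the extension completely transparent, at the cost of a few more lines; the paper's route is shorter but leans on external references. Both are valid.
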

\goodbreak
\begin{proof}
  Lemma~2.9(b) of \cite{mrw:jot87} implies that $\qs$ defines a
  surjection of $\cc(G)$ onto $\cc(G/\intiso(G))$.  It clearly
  preserves involution, and
  \begin{align*}
    \qs(f)*\qs(g)(b)&=\sum_{s'(a)=r'(b)} \qs(f)(a^{-1})\qs(g)(ab)
    = \sum_{s'(a)=r'(b)} \sum_{\substack{q(\gamma)=a\\ q(\delta)=b}} f(\gamma^{-1})g(\gamma^{-1}\delta) \\
    &= \sum_{q(\delta)=b} \sum_{s(\gamma)=r(\delta)} f(\gamma^{-1})
    g(\gamma\delta) = \sum_{q(\delta)=b} f*g(\delta) = \qs(f*g)(b).
  \end{align*}

  It is not hard to see that $\qs$ is continuous in the
  inductive-limit topology (see
  \cite{muhwil:nyjm08}*{Corollary~2.17}). Since the
  $\|\cdot\|_{I}$-norm dominates the \cs-norm, the inductive-limit
  topology is stronger than the \cs-norm topology.  Hence $\qs$
  extends to a $C^*$-homomorphism from $\cs(G)$ to $\cs(G/\intiso(G))$
  as claimed.
\end{proof}

\begin{remark} It is fairly unusual for $\intiso(G)$ to be closed in a
  general \'etale groupoid $G$ (but see
  Proposition~\ref{prop-iso-on-y-closed} and
  \cite{kps:xx14}*{Proposition~2.1}). For example, let $X$ denote the
  union of the real and imaginary axes in $\C$, and let $T : X \to X$
  be the homeomorphism $z \mapsto \overline{z}$. Regarding $T$ as the
  generator of an action of $\N$ by local homeomorphisms, we form the
  associated groupoid
  \[
  G_T = \sset{(t,m,t) : t \in \R, m \in \Z\} \cup \{(z, 2m ,z), (z,
    2m+1, \overline{z}) : z \in i\R, m \in \Z}.
  \]
  Then
  \[
  \intiso(G) = \sset{(z, 2m, z) : z \in X, m \in \Z\} \cup \{(t, 2m+1,
    t) : t \in \R\setminus\{0\}, m \in \Z}
  \]
  is not closed: for example, $(0,1,0) \in
  \overline{\intiso(G)}\setminus\intiso(G)$.

  However, we do not have an example of an \'etale groupoid $G$ which
  acts irreducibly on its unit space and in which $\intiso(G)$ is not
  closed; and \cite{kps:xx14}*{Proposition~2.1} implies that no such
  example exists amongst the Deaconu--Renault groupoids of $\N^k$
  actions that we consider for the remainder of the paper.
\end{remark}

  \section{Deaconu--Renault Groupoids}
  \label{sec:deac-rena-group}

  Given $k$ commuting local homeomorphisms of a locally compact
  Hausdorff space $X$, we obtain an action of $\N^{k}$ on $X$ written
  $n\mapsto T^{n}$ (we do not assume that the $T^n$ are surjective---cf.,
  \cite{exeren:etds07}). The corresponding \emph{Deaconu--Renault
    Groupoid} is the set
  \begin{equation}
    \label{eq:2}
    G_{T}:= \bigcup_{m,n\in\N^{k}}\sset{(x,m-n,y) \in X \times \Z^k \times X :
        T^{m}x = T^{n}y}
  \end{equation}
  with unit space $\go_T = \sset{(x,0,x) : x \in X}$ identified with
  $X$, range and source maps $r(x,n,y) = x$ and $s(x,n,y) = y$, and
  operations $(x,n,y)(y,m,z)=(x,n+m,z)$ and $(x,n,y)^{-1}= (y,-n,x)$.
 For open sets $U,V \subseteq X$ and for $m,n \in \N^k$, we define
  \begin{equation}\label{eq:Zset}
    Z(U,m,n,V) := \sset{(x,m-n,y):\text{$x\in U$, $y\in V$ and
        $T^{m}x=T^{n}y$}}.
  \end{equation}

  \begin{lemma}
    \label{lem-lch-top}
    Let $X$ be a locally compact Hausdorff space and let $T$ be an
    action of $\N^{k}$ on $X$ by local homeomorphisms. The
    sets~\eqref{eq:Zset} are a basis for a locally compact Hausdorff topology
    on $G_T$. The sets $Z(U,m,n,V)$ such that $T^m|_U$ and $T^n|_V$ are
    homeomorphisms and $T^m(U) = T^n(V)$ are a basis for the same topology.
    Under this topology and operations defined above, $G_{T}$ is a locally
    compact Hausdorff \'etale groupoid.
  \end{lemma}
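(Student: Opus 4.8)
The plan is to work throughout with the distinguished family of sets described in the statement; call $Z(U,m,n,V)$ \emph{regular} if $T^{m}|_{U}$ and $T^{n}|_{V}$ are homeomorphisms onto open subsets of $X$ and $T^{m}(U)=T^{n}(V)$. The point of this notion is that a regular set is the graph of a partial homeomorphism of $X$: writing $\phi=(T^{n}|_{V})^{-1}\circ T^{m}|_{U}\colon U\to V$, we have $Z(U,m,n,V)=\set{(x',m-n,\phi(x')):x'\in U}$, so that $r$ and $s$ restrict to bijections of it onto $U$ and onto $V$. I would first use the elementary fact that a finite composite of local homeomorphisms is a local homeomorphism, so each $T^{n}$ is a local homeomorphism and every point of $X$ has arbitrarily small neighbourhoods on which any prescribed finite collection of the maps $T^{\bullet}$ restricts to a homeomorphism onto an open set. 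The strategy is then: (1)~show the regular sets form a basis; (2)~deduce that the general sets $Z(U,m,n,V)$ are open and also form a basis for the same topology; (3)~read off the Hausdorff, local compactness, and \'etale properties from the bijections $r,s$ on regular sets.

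For (1) the covering property is easy: given $(x,m-n,y)\in G_{T}$ with $T^{m}x=T^{n}y$, shrink neighbourhoods $U_{0}\ni x$, $V_{0}\ni y$ on which $T^{m},T^{n}$ are homeomorphisms, put $W:=T^{m}(U_{0})\cap T^{n}(V_{0})$ and $U:=(T^{m}|_{U_{0}})^{-1}(W)$, $V:=(T^{n}|_{V_{0}})^{-1}(W)$; then $Z(U,m,n,V)$ is regular and contains the point. \textbf{The intersection property is the crux of the whole lemma.} Suppose $(x,p,y)$ lies in $Z(U_{1},m_{1},n_{1},V_{1})\cap Z(U_{2},m_{2},n_{2},V_{2})$, both regular, with associated partial homeomorphisms $\phi_{1},\phi_{2}$; necessarily $p=m_{1}-n_{1}=m_{2}-n_{2}$, hence $m_{1}+n_{2}=m_{2}+n_{1}$. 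The key computation is that for $x'$ in the common domain
\[
T^{n_{1}+n_{2}}\phi_{1}(x')=T^{n_{2}+m_{1}}x'=T^{n_{1}+m_{2}}x'=T^{n_{1}+n_{2}}\phi_{2}(x').
\]
Now $\phi_{1}(x)=\phi_{2}(x)=y$, so by continuity $\phi_{1}(x'),\phi_{2}(x')$ both lie near $y$ for $x'$ near $x$; choosing a neighbourhood of $y$ on which $T^{n_{1}+n_{2}}$ is injective (possible since it is a local homeomorphism) forces $\phi_{1}=\phi_{2}$ on some open neighbourhood $U'\ni x$. Restricting to $U'$ then produces a regular set $Z(U',m_{1},n_{1},\phi_{1}(U'))$ containing $(x,p,y)$ and contained in both sets. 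I expect this local-injectivity step to be the main obstacle, since it is exactly where one uses that distinct $(m,n)$-representations of the same groupoid element are reconciled, and it is the reason the ``obvious'' map $G_{T}\hookrightarrow X\times\Z^{k}\times X$ fails to be an embedding.

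For (2), each $Z(U,m,n,V)$ is a union of regular sets: around any of its points shrink $U,V$ and equalise images exactly as in the covering step (keeping $U'\subseteq U$, $V'\subseteq V$), which lands a regular set inside; hence $Z(U,m,n,V)$ is open, and since regular sets are themselves of this form, the general sets also form a basis. From this the separation and regularity statements follow cleanly. The maps $r,s\colon G_{T}\to X$ are continuous because $r^{-1}(O)\cap Z(U,m,n,V)=Z(U\cap O,m,n,V)$ and $s^{-1}(O)\cap Z(U,m,n,V)=Z(U,m,n,V\cap O)$, and the middle-coordinate map $c(x',q,y')=q$ is locally constant; thus $(r,c,s)\colon G_{T}\to X\times\Z^{k}\times X$ is a continuous injection into a Hausdorff space (with $\Z^{k}$ discrete), giving Hausdorffness. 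I would stress here that one cannot simply invoke the subspace topology, as the groupoid topology is strictly finer.

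Finally, for (3) I would show that $r$ restricts to a \emph{homeomorphism} of each regular set onto the open set $U\subseteq X$: it is a continuous bijection, and it is open because the intersection of one regular set with another is carried by $r$ precisely onto the open set where the two associated partial homeomorphisms agree, which is open by the argument in (1). Since regular sets form a basis and each is homeomorphic to an open subset of the locally compact Hausdorff space $X$, the groupoid $G_{T}$ is locally compact and Hausdorff (and second countable, by the standing assumptions, as $\N^{k}\times\N^{k}$ is countable). The unit space $\set{(x,0,x)}=\bigcup_{U}Z(U,0,0,U)$ is a union of basic sets, hence open, and identifying it with $X$ via $r$ shows that $r$ is a local homeomorphism; thus $G_{T}$ is \'etale. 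It then remains only to check that the groupoid operations are continuous, which is routine: inversion carries $Z(U,m,n,V)$ homeomorphically onto $Z(V,n,m,U)$, and continuity of multiplication is verified on the basis of regular sets using that composites of the $T^{\bullet}$ are again local homeomorphisms.
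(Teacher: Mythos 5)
Your proposal is correct, and it is essentially a self-contained version of an argument that the paper largely delegates to a citation. The paper's proof of this lemma invokes \cite{exeren:etds07}*{Propositions 3.1 and 3.2} for the fact that the sets $Z(U,m,n,V)$ form a basis, remarking only that the argument there ``is easily modified''; the content you supply in step (1) --- that a ``regular'' $Z(U,m,n,V)$ is the graph of the partial homeomorphism $(T^{n}|_{V})^{-1}\circ T^{m}|_{U}$, and that two such graphs through a common point $(x,p,y)$ must agree near $x$ because $T^{n_{1}+n_{2}}\phi_{1}=T^{n_{1}+n_{2}}\phi_{2}$ on the common domain and $T^{n_{1}+n_{2}}$ is locally injective at $y$ --- is exactly the reconciliation step that the citation is standing in for, and you have it right. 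Your step (2) recovers the paper's explicit observation that $Z(U,m,n,V)=Z\big(U\cap (T^{m})^{-1}(T^{m}U\cap T^{n}V),m,n,V\cap (T^{n})^{-1}(T^{m}U\cap T^{n}V)\big)$, so the two bases generate the same topology. The one place where your route genuinely diverges is local compactness: the paper shows each $Z(K_{1},p,q,K_{2})$ with $K_{1},K_{2}$ compact is compact, being the continuous image of the compact set $\sset{(x,y)\in K_{1}\times K_{2}:T^{p}x=T^{q}y}$, whereas you deduce local compactness from the fact that $r$ restricts to a homeomorphism of each regular basic set onto an open subset of $X$. Your version requires the additional openness argument for $r$ on a regular set (which you correctly reduce to the openness of the agreement locus of two partial homeomorphisms), but it buys you the \'etale property simultaneously, which the paper also ultimately extracts from the same observation about the source map. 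Both treatments leave continuity of multiplication as a routine verification, so you are no less complete than the paper there.
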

  \begin{proof} When $X$ is compact and the $T^m$ are surjective, this
    result follows immediately from \cite{exeren:etds07}*{Propositions
    3.1~and 3.2}. Their proof is easily modified to show that the
    $Z(U,m,n,V)$ form a basis for a topology on $G_{T}$ when $X$ is
    assumed only to be locally compact and the $T^n$ are not assumed
    to be surjective. It is not hard to see that
    the groupoid operations are continuous in this topology.

    Since the $T^m$ are all local homeomorphisms, each $Z(U,m,n,V)$ is
    a union of sets $Z(U', m, n, V')$ such that $T^m|_{U'}$ and
    $T^n|_{V'}$ are local homeomorphisms. Given $U, V$, we have
    \[
    Z(U, m, n, V) = Z\big(U \cap (T^m)^{-1}(T^mU \cap T^nV), m, n, V
    \cap (T^n)^{-1}(T^mU \cap T^nV)\big).
    \]
    So the sets $Z(U, m, n, V)$ such that $T^m|_U$ and $T^n|_V$ are
    homeomorphisms with $T^mU = T^nV$ form a basis for the same
    topology as claimed.

    To see that this topology is locally compact, let $K_{1}$ and
    $K_{2}$ be compact subsets of $X$.  Then just as in
    \cite{exeren:etds07}*{Proposition~3.2}, the map $(x,y)\mapsto
    (x,p-q,y)$ is continuous from the compact set $\sset{(x,y)\in
      K_{1}\times K_{2}:T^{p}x=T^{q}y}$ onto $Z(K_{1},p,q,K_{2})$.
    Hence the latter is compact in $G_{T}$.  It now follows easily
    that $G_{T}$ is locally compact. It is \'etale because the source map
    restricts to a homeomorphism on any set of the form described in the
    preceding paragraph.
  \end{proof}

  We now state our main theorem, which gives a complete listing of the
  primitive ideals of $C^*(G_T)$; but we need to establish a little
  notation first. Recall that for $x \in X$, the orbit $r((G_T)_x)$ is
  denoted $[x]$. So
  \[
  [x] = \sset{y \in X : \text{$T^m x = T^n y$ for some $ m,n \in
      \N^k$}}.
  \]
  We write
  \[
  H(x) := \bigcup_{\substack{\emptyset \not= U \subset \overline{[x]}
      \\ U\text{ relatively open}}} \sset{m - n : \text{$m,n \in \N^k$
      and $T^m y = T^n y$ for all $y \in U$}}.
  \]
  We write $H(x)^\perp := \sset{z \in \T^k : z^g = 1 \text{ for all }
    g \in H(x)}$. We shall see later that $H(x)$ is a subgroup of
  $\Z^k$, so this usage of $H(x)^\perp$ is consistent with the usual
  notation for the annihilator in $\T^k$ of a subgroup of $\Z^k$.
  Our main theorem is the following.

  \begin{thm}
    \label{thm-mainthm}
    Suppose that $T$ is an action of $\N^{k}$ on a locally compact
    Hausdorff space $X$ by local homeomorphisms. For each $x\in X$ and
    $z\in\T^{k}$, there is an irreducible representation $\pi_{x,z}$
    of $\cs(G_{T})$ on $\ell^{2}([x])$ such that
    \begin{equation}\label{eq:pi_xz formula}
      \pi_{x, z}(f) \delta_{y}
      = \sum_{(u,g,y) \in G_T} z^g f(u,g,y) \delta_u\quad\text{ for all
        $f \in C_c(G_T)$}.
    \end{equation}
    The relation on $X \times \T^k$ given by
    \[
    (x,z) \sim (y,w)\text{ if and only if }\overline{[x]} = \overline{[y]}\text{ and }
    \overline{z}w \in H(x)^\perp
    \]
    is an equivalence relation, and $\ker(\pi_{x, z}) =
    \ker(\pi_{y, w})$ if and only if $(x,z) \sim (y,w)$. The map
    $(x,z) \mapsto \ker\pi_{x,z}$ induces a bijection from $(X \times
    \T^k)/{\sim}$ to $\Prim(C^*(G_T))$.
  \end{thm}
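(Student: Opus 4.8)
The plan is to build the representations $\pi_{x,z}$ explicitly, to reduce the computation of their kernels to the case of an irreducible action by passing to orbit closures, and then to read off $\Prim(\cs(G_T))$ from an induced-algebra decomposition of $\cs(G_T)$ coming from the canonical $\T^k$-action. I would begin by recording that $G_T$ is amenable, so that the full and reduced norms agree and $I$-norm boundedness is available throughout. A direct convolution computation then shows that \eqref{eq:pi_xz formula} defines a $*$-homomorphism on $\cc(G_T)$: the identity $z^{g+h}=z^gz^h$ makes $\pi_{x,z}(f*g)=\pi_{x,z}(f)\pi_{x,z}(g)$, involution is preserved, and $I$-norm boundedness extends $\pi_{x,z}$ to $\cs(G_T)$. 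For irreducibility I would compute the commutant: the subalgebra $C_0(X)\subseteq\cs(G_T)$ acts diagonally, $\pi_{x,z}(\phi)\delta_y=\phi(y)\delta_y$, so any operator commuting with $\pi_{x,z}$ is diagonal, $T\delta_y=\lambda_y\delta_y$; and for a precompact open bisection $U$ containing $(u,g,y)$ one has $\pi_{x,z}(1_U)\delta_y=z^g\delta_u$, whence commuting with $T$ forces $\lambda_u=\lambda_y$. Since $[x]$ is a single orbit, all the $\lambda_y$ coincide and $T$ is scalar, so $\pi_{x,z}$ is irreducible and $\ker\pi_{x,z}\in\Prim(\cs(G_T))$.

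Next I would reduce to an irreducible action. Given an arbitrary irreducible representation $\rho$, Proposition~\ref{prop-invar-ideals} and Lemma~\ref{lem-g-irr} identify the support of $\rho|_{C_0(X)}$ with an orbit closure $\overline{[x]}$, and Proposition~\ref{prop-factors} shows $\rho$ factors through $\cs(G_T\restr{\overline{[x]}})$, on which it is faithful on $C_0(\overline{[x]})$; so I may assume $X=\overline{[x]}$. The crux of Section~\ref{sec:deac-rena-group}, and the step special to $\N^k$, is then to produce a nonempty open invariant subset $Y\subseteq X$ on which the $\N^k\times\N^k$-isotropy is maximal. On such a $Y$ the interior isotropy $\intiso(G_T\restr{Y})$ is closed, $H(x)$ is the constant isotropy subgroup of $\Z^k$ over $Y$, and restriction of functions gives a bijection between irreducible representations of $\cs(G_T)$ faithful on $C_0(X)$ and those of $\cs(G_T\restr{Y})$ faithful on $C_0(Y)$. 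This simultaneously establishes that $H(x)$ is a subgroup of $\Z^k$ depending only on $\overline{[x]}$; consequently $H(x)^\perp$ is a subgroup of $\T^k$, and the relation $\sim$ is reflexive, symmetric and transitive.

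With such a $Y$ in hand, and after replacing $X$ by $Y$, I would invoke Propositions~\ref{prop-quotient-groupoid} and~\ref{prop-quotient-map}: since $\intiso(G_T)$ is now closed, $G_T/\intiso(G_T)$ is a Hausdorff étale \emph{topologically principal} groupoid, and $\qs$ realises $\cs(G_T/\intiso(G_T))$ as the fibre of an induced algebra associated to the canonical $\T^k$-action on $\cs(G_T)$. An irreducible representation faithful on $C_0(Y)$ must therefore factor through evaluation at a single $z\in\T^k$ followed by an irreducible representation of $\cs(G_T/\intiso(G_T))$ faithful on $C_0(Y)$; because the quotient is topologically principal, such a representation is determined by its support, and unwinding $\qs$ together with the evaluation at $z$ identifies the resulting ideal with $\ker\pi_{x,z}$. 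Comparing two such data shows that $\ker\pi_{x,z}=\ker\pi_{y,w}$ precisely when the orbit closures agree and $z,w$ restrict to the same character of $\intiso$, that is, when $\overline{z}w\in H(x)^\perp$.

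Assembling these steps yields the theorem. The reductions above show that every irreducible representation is equivalent, after factoring and restriction, to one of the $\pi_{x,z}$, so $(x,z)\mapsto\ker\pi_{x,z}$ is onto $\Prim(\cs(G_T))$; the kernel computation shows this map is constant on $\sim$-classes and separates distinct classes, hence descends to a bijection $(X\times\T^k)/{\sim}\to\Prim(\cs(G_T))$. I expect the main obstacle to be the construction of the maximal-isotropy set $Y$ and the accompanying restriction bijection: closedness of $\intiso(G_T\restr{Y})$ is exactly what fails for general \'etale groupoids (cf.\ the Remark following Proposition~\ref{prop-quotient-map}), and it is this step—rather than the formal induced-algebra argument or the final bookkeeping—that forces the passage from the pointwise isotropy at $x$ to the generic subgroup $H(x)$ and that genuinely exploits the structure of $\N^k$-actions.
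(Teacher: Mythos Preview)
Your proposal is correct and follows essentially the same architecture as the paper: construct $\pi_{x,z}$ and prove it irreducible, factor through the orbit closure via Proposition~\ref{prop-factors}, pass to the maximal-isotropy open set $Y(x)$ using Corollary~\ref{cor-res-to-y}, and then invoke the induced-algebra isomorphism of Proposition~\ref{prp:induced alg} to read off both the kernel comparison and surjectivity. The only minor differences are presentational---you argue irreducibility by a direct commutant computation (take care to use a continuous bump function on a bisection rather than $1_U$, which need not lie in $C_c(G_T)$), whereas the paper realises $\pi_{x,z}$ as $\omega_{[x]}\circ\alpha_z$ with $\omega_{[x]}$ induced from a point and cites \cite{ionwil:pams08}; and the paper separates kernels for $\overline{z}w\notin H(x)^\perp$ by an explicit element $\phi\cdot f$ rather than by appeal to the primitive-ideal description of the induced algebra.
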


  \begin{remark} A warning is in order. Theorem~\ref{thm-mainthm}
    lists the primitive ideals of $C^*(G_T)$, but it says nothing
    about the Jacobson topology. Example~\ref{eg:notopen} below shows
    that neither the map $(x,z) \mapsto \ker\pi_{x,z}$ nor the induced
    map from $\qo(G_T) \times \T^k$ to
    $\Prim(C^*(G_T))$ is open in general.
  \end{remark}

  \begin{example}\label{eg:notopen}
    Consider the directed graph $E$ with two vertices $v$ and $w$ and
    three edges $e,f,g$ where $e$ is a loop at $v$, $g$ is a loop at
    $w$ and $f$ points from $w$ to $v$.  We use the conventions of
    \cite{honszy:jmsj04}, so the infinite paths in $E$ are $e^\infty$,
    $g^\infty$ and $\{g^n f e^\infty : n = 0, 1, 2, \dots\}$.  There
    are two orbits: $[g^\infty]$ and $[e^\infty]$. The latter is dense
    (because $\lim_{n \to \infty} g^n f e^\infty = g^\infty$),
    while the former is a singleton and is closed. As shown in
    \cite{kprr:jfa97}, $\cs(E)$ is isomorphic to $\cs(G_{T})$
    where $T$ is the shift
    operator on the infinite path space $E^{\infty}$. Hence we can
    apply \cite{honszy:jmsj04} to conclude  that each
    $\ker\pi_{e^\infty, z} \subset \ker\pi_{g^\infty, w}$, and if
    $I_{x,z} := \ker\pi_{x,z}$ for $x \in E^\infty$ and $z \in \T$, we
    have $\overline{\{I_{g^\infty, z}\}} = \{I_{g^\infty, z}\} \cup
    \{I_{e^\infty, w} : w \in \T\}$. So, for example, the set
    $E^\infty \times \{w \in \T : \Re(w) > 0\}$ is open in $E^\infty
    \times \T$, but its image is not open in $\Prim(C^*(E))$; and
    likewise the set $\qo(E) \times \{w : \Re(w) > 0\}$ is open in
    $\qo(G_E) \times \T$, but its image is not open in
    $\Prim(C^*(E))$.
  \end{example}

  The proof of Theorem~\ref{thm-mainthm} occupies this and the next
  two sections, culminating in
  Section~\ref{sec:prim-ideals-csg_t}. Our first order of business is
  to show that $G_T$ is always amenable.

  \begin{lemma}
    \label{lem:amenable}
    Let $G_{T}$ be the locally compact Hausdorff \'etale groupoid
    arising from an action of $T$ of $\N^{k}$ on $X$ by local
    homeomorphisms as above.  Let $c:G_{T}\to \Z^{k}$ be the cocycle
    $c(x,k,y)=k$.  Then both $c^{-1}(0)$ and $G_{T}$ are amenable.
  \end{lemma}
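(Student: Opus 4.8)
The plan is to prove the two amenability assertions in turn, obtaining the amenability of $G_T$ from that of the kernel $c^{-1}(0)$ together with the amenability of the group $\Z^{k}$. The kernel is
\[
c^{-1}(0) = \set{(x,0,y) : T^{m}x = T^{m}y \text{ for some } m\in\N^{k}},
\]
and my first step is to exhibit it as an increasing union of proper subgroupoids. For $m\in\N^{k}$ set $R_{m} := \set{(x,0,y) : T^{m}x = T^{m}y} = Z(X,m,m,X)$, which is a basic open set by Lemma~\ref{lem-lch-top} and is readily checked to be an open subgroupoid of $G_{T}$ with unit space $X$. Each $R_{m}$ is principal, its isotropy being trivial, and it is proper: for compact $K_{1},K_{2}\subseteq X$ the preimage $(r,s)^{-1}(K_{1}\times K_{2})$ inside $R_{m}$ is precisely $Z(K_{1},m,m,K_{2})$, which is compact by (the proof of) Lemma~\ref{lem-lch-top}. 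A proper \'etale groupoid is amenable, so each $R_{m}$ is amenable.

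Since $T^{m}x = T^{m}y$ forces $T^{m'}x = T^{m'}y$ whenever $m\le m'$, we have $R_{m}\subseteq R_{m'}$, and choosing the cofinal sequence $m_{j} = (j,\dots,j)\in\N^{k}$ expresses $c^{-1}(0) = \bigcup_{j} R_{m_{j}}$ as an increasing union of open amenable subgroupoids sharing the unit space $X$. Because amenability is preserved under such increasing unions (see \cite{anaren:amenable00}), the kernel $c^{-1}(0)$ is amenable. For the remaining assertion I would regard $G_{T}$ as an extension of the amenable groupoid $c^{-1}(0)$ by the group $\Z^{k}$ via the continuous cocycle $c$. As $\Z^{k}$ is discrete and abelian, hence amenable, and its preimage $c^{-1}(0)$ under $c$ is amenable by the previous step, the permanence of amenability under extensions by amenable groups (again \cite{anaren:amenable00}) shows that $G_{T}$ is amenable.

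The purely topological verifications — that $Z(K_{1},m,m,K_{2})$ is compact and that $R_{m}$ is an open subgroupoid — are immediate from Lemma~\ref{lem-lch-top}, so the genuine content lies in correctly locating and applying the two permanence properties of amenability. I expect the main obstacle to be the second step: one must invoke the precise form of the extension theorem, confirming that a continuous $\Z^{k}$-valued cocycle whose kernel is amenable forces amenability of the whole groupoid. The first step, by contrast, is the routine identification of $c^{-1}(0)$ as a directed union of proper principal groupoids, for which properness is supplied directly by the compactness of the sets $Z(K_{1},m,m,K_{2})$.
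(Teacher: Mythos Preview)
Your proof is correct and follows essentially the same strategy as the paper: both exhibit $c^{-1}(0)$ as the increasing union of the open subgroupoids $R_m = F_m = \{(x,0,y) : T^m x = T^m y\}$, deduce amenability of $c^{-1}(0)$ from the permanence result in \cite{anaren:amenable00}, and then obtain amenability of $G_T$ from the $\Z^k$-valued cocycle (the paper cites \cite{spi:tams12}*{Proposition~9.3} for this last step). The only difference is in justifying amenability of each $R_m$: you argue directly that $R_m$ is proper (via compactness of $Z(K_1,m,m,K_2)$) and hence amenable, whereas the paper goes through Borel amenability of groupoids with countable fibres---your route is slightly more direct.
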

  \begin{proof}
    For each $n\in \N^{k}$, let $F_{n}:=\sset{(x,0,y):T^{n}x=T^{n}y}$.
    Then each $F_{n}$ is a closed subgroupoid containing $\go$, and
    \begin{equation*}
      c^{-1}(0)=\bigcup_{n\in\N^{k}}F_{n}.
    \end{equation*}
    In fact, each $F_{n}$ is also open in $G$: for $(x,0,y)\in F_{n}$
    and any neighborhoods $U$ of $x$ and $V$ of $y$, we have $(x,0,y)
    \in Z(U,n,n,V)\subset F_{n}$.

    Since $N^{k}$ acts by local homeomorphisms, for $x \in X$ the set
    $\sset{y\in X:T^{n}y=T^{n}x}$ is discrete and therefore countable.  It then
    follows from \cite{anaren:amenable00}*{Example~2.1.4(2)} that
    $F_{n}$ is a properly amenable Borel groupoid, and hence Borel
    amenable as in \cite{ren:xx13}*{Definition~2.1}.  Since $F_{n}$ is open in
    $G_{T}$, it has a continuous Haar system (by restriction).  Hence
    it is amenable by \cite{ren:xx13}*{Corollary~2.15}.  It then
    follows from \cite{anaren:amenable00}*{Proposition~5.3.37} that
    $c^{-1}(0)$ is measurewise amenable.  Since $c^{-1}(0)$ is open in
    $G_{T}$, it too is \'etale.  Hence $c^{-1}(0)$ is amenable due to
    \cite{anaren:amenable00}*{Theorem~3.3.7}.

    The amenability of $G_{T}$ now follows from
    \cite{spi:tams12}*{Proposition~9.3}.
  \end{proof}

  Our next task is to understand the interior of the isotropy in
  $G_T$. By definition of the topology on $G_T$ this is the union of
  all the sets $Z(U, m,n, U)$ such that $U \subset X$ is open and
  $T^mx = T^nx$ for all $x \in U$. Our approach is based on that of
  \cite{ckss:fja14}*{Section~4}.

  \begin{lemma}
    \label{lem:SigmaU properties}
    Let $T$ be an action of $\N^{k}$ on $X$ by local homeomorphisms.
    For each nonempty open set $U\subset X$, let
    \begin{equation}
      \label{eq:3}
      \Sigma_{U}:= \sset{(m,n)\in\N^{k}\times\N^{k}:\text{$T^{m}x=T^{n}x$
          for all $x\in U$}}.
    \end{equation}
    Then
    \begin{enumerate}
    \item\label{it:SigmaMonoid} $\Sigma_{U}$ is a submonoid of
      $\N^{k}\times\N^{k}$.
    \item\label{it:SigmaEquiv} $\Sigma_{U}$ is an equivalence relation
      on $\N^{k}$.
    \item\label{it:SigmaNested} If $U\subset V$, then
      $\Sigma_{V}\subset \Sigma_{U}$.
    \item\label{it:SigmaInvariant} For $p\in\N^{k}$ and $U$ open and
      nonempty, we have $\Sigma_{U}\subset \Sigma_{T^{p}U}$.
    \end{enumerate}
  \end{lemma}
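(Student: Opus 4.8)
The plan is to verify each of the four assertions directly from the definition \eqref{eq:3}, with the abelian structure of the action doing essentially all of the work. Throughout, the one fact I would lean on is that equalities of the form $T^m x = T^n x$ are stable under post-composition: applying any $T^q$ to both sides and using $T^q T^m = T^{m+q} = T^q T^m$ (the relation $m+q = q+m$ in $\N^k$) yields $T^{m+q} x = T^{n+q} x$. For \partref{1}, I would first note that $(0,0)\in\Sigma_U$ because $T^0$ is the identity. For closure under the coordinatewise addition of $\N^k\times\N^k$, I would take $(m,n),(p,q)\in\Sigma_U$ and $x\in U$ and compute
\[
T^{m+p} x = T^m(T^p x) = T^m(T^q x) = T^{m+q} x = T^q(T^m x) = T^q(T^n x) = T^{n+q} x,
\]
where the second equality uses $(p,q)\in\Sigma_U$, the fifth uses $(m,n)\in\Sigma_U$, and the remaining steps are the action relations together with commutativity. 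Hence $(m+p,n+q)\in\Sigma_U$, and $\Sigma_U$ is a submonoid.

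Part \partref{2} is then formal. Viewing $\Sigma_U$ as a relation on $\N^k$, reflexivity is immediate since $T^m x = T^m x$; symmetry is immediate from the symmetry of the defining equation $T^m x = T^n x$; and transitivity follows by chaining, since $T^m x = T^n x$ and $T^n x = T^p x$ on $U$ give $T^m x = T^p x$ on $U$. Part \partref{3} is even more direct: if $(m,n)\in\Sigma_V$ then $T^m y = T^n y$ for every $y\in V$, and in particular for every $y\in U\subset V$, so $(m,n)\in\Sigma_U$.

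For \partref{4}, I would begin with the small preliminary remark that $T^p U$ is open, since each $T^j$ is a local homeomorphism and hence an open map; this is what makes $\Sigma_{T^p U}$ well-defined. Then, given $(m,n)\in\Sigma_U$ and any $y\in T^p U$, I would write $y = T^p x$ with $x\in U$ and use commutativity to get
\[
T^m y = T^m T^p x = T^p T^m x = T^p T^n x = T^n T^p x = T^n y,
\]
the middle equality being $(m,n)\in\Sigma_U$. Thus $(m,n)\in\Sigma_{T^p U}$, proving $\Sigma_U\subset\Sigma_{T^p U}$.

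I do not expect any genuine obstacle here; the entire lemma is a formal consequence of the semigroup and commutativity relations satisfied by the action. The only two points that require any attention are the honest use of commutativity in the displays for \partref{1} and \partref{4} (one cannot manipulate the exponents without knowing the maps commute) and the observation in \partref{4} that $T^p U$ is open, so that the target monoid is defined. Everything else reduces to rewriting the defining condition $T^m = T^n$ on the relevant set.
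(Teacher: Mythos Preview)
Your proof is correct and follows essentially the same approach as the paper: the key computation for \partref{1} is identical, and the paper simply notes that \partref{2} and \partref{3} are immediate while \partref{4} is the special case $p=q$ of that same display. Your additional remark that $T^pU$ is open (so that $\Sigma_{T^pU}$ is defined) is a nice point the paper leaves implicit.
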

  \begin{proof}
    Clearly $(0,0)\in \Sigma_{U}$. Suppose that $(m,n),(p,q)\in
    \Sigma_{U}$.  For $x\in U$ we have
    \begin{equation}\label{eq:4}
      T^{m+p}x=T^{m}T^{p}x=T^{m}T^{q}x=T^{q}T^{m}x=T^{q}T^{n}x=T^{n+q}x.
    \end{equation}
    This proves (\ref{it:SigmaMonoid}). Statements
    (\ref{it:SigmaEquiv})~and~(\ref{it:SigmaNested}) are immediate,
    and~(\ref{it:SigmaInvariant}) follows from the special case
    of~\eqref{eq:4} where $p=q$.
  \end{proof}
  Since our aim is identify the primitive ideals of $\cs(G_{T})$, and
  since Lemma~\ref{lem-g-irr} shows that every irreducible
  representation of $\cs(G_{T})$ factors through the restriction of
  $G_T$ to some $\N^{k}$-irreducible subset, we will often assume that
  $X$ itself (viewed as $\go$) is $\N^{k}$-irreducible. In this case,
  we will say that $T$ \emph{acts irreducibly}. Lemma~\ref{lem-g-irr}
  then implies that $X$ has a dense orbit: $X = \overline{[x]}$ for
  some $x \in X$.

  \begin{lemma}
    \label{lem:common open}
    Let $T$ be an $\N^{k}$-irreducible action on $X$ by local
    homeomorphisms. For all open subsets $U, V \subseteq X$, there
    exists a nonempty open set $W$ such that $\Sigma_{U}\cup \Sigma_{V}
    \subset\Sigma_{W}$
  \end{lemma}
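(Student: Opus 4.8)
The plan is to construct $W$ by flowing $U$ and $V$ forward under the action until their images overlap, and then to read off the containment from the two monotonicity properties of $\Sigma_{(\cdot)}$ recorded in Lemma~\ref{lem:SigmaU properties}. (Here $U,V$ are nonempty, as $\Sigma$ is only defined for nonempty open sets.) If $U$ and $V$ met, one could simply pick a nonempty open $W\subseteq U\cap V$ and apply the nesting property (part~(\ref{it:SigmaNested})) twice to obtain $\Sigma_U\subseteq\Sigma_W$ and $\Sigma_V\subseteq\Sigma_W$. The genuine difficulty is that $U$ and $V$ may be disjoint, so no such $W$ is available directly; the whole point of the lemma is to use irreducibility to manufacture a common open set in spite of this.

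First I would use irreducibility to fix $x_0\in X$ with $\overline{[x_0]}=X$, as recorded just before the lemma. Since $[x_0]$ is dense and $U,V$ are nonempty open, I can choose $a\in U\cap[x_0]$ and $b\in V\cap[x_0]$. As $a$ and $b$ lie in the same orbit, the orbit relation is symmetric and transitive, so applying suitable powers of $T$ to the equalities witnessing $a\in[x_0]$ and $b\in[x_0]$ yields $p,q\in\N^k$ with $T^p a=T^q b$.

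I would then set $W:=T^p U\cap T^q V$. Each $T^n$ is a local homeomorphism and hence an open map, so $T^p U$ and $T^q V$ are open; moreover the common value $T^p a=T^q b$ lies in both, so $W$ is nonempty and open. Finally, the invariance property (part~(\ref{it:SigmaInvariant})) gives $\Sigma_U\subseteq\Sigma_{T^p U}$, and since $W\subseteq T^p U$ the nesting property (part~(\ref{it:SigmaNested})) gives $\Sigma_{T^p U}\subseteq\Sigma_W$; together these yield $\Sigma_U\subseteq\Sigma_W$. The symmetric argument gives $\Sigma_V\subseteq\Sigma_W$, whence $\Sigma_U\cup\Sigma_V\subseteq\Sigma_W$, as required. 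The only real obstacle is the disjoint case, and it is overcome by the single observation that pushing forward by the action transports relations without destroying them (part~(\ref{it:SigmaInvariant})), while a dense orbit supplies a point at which the flowed-forward copies of $U$ and $V$ must meet.
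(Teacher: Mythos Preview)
Your proof is correct and follows essentially the same approach as the paper's: both use the dense orbit to find points of $U$ and $V$ in a common orbit, push $U$ and $V$ forward under appropriate powers of $T$ so that their images intersect, take $W$ to be that intersection, and then invoke parts~(\ref{it:SigmaNested}) and~(\ref{it:SigmaInvariant}) of Lemma~\ref{lem:SigmaU properties}. Your write-up is in fact slightly cleaner than the paper's, which carries an extraneous relation $T^p z = T^l x$ and has the roles of $m$ and $n$ swapped in the line ``$T^n U \cap T^m V \neq \emptyset$''.
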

  \begin{proof}
    Fix $x$ with $\overline{[x]} = X$. Choose $y\in U$ and $z\in V$
    such that $T^{r}y=T^{s}z$ and $T^{p}z=T^{l}x$.  Then
    $T^{r+l}y=T^{p+s}z$, so $m = r+l$ and $n = s+p$ satisfy
    $T^{n}U\cap T^{m}V\not=\emptyset$.  Since $T^{m}$ and $T^{n}$ are
    local homeomorphisms, and therefore open maps, $W := T^{m}U\cap
    T^{n}V$ is open. Parts
    (\ref{it:SigmaNested})~and~(\ref{it:SigmaInvariant}) of
    Lemma~\ref{lem:SigmaU properties} show that $\Sigma_{U}\subset
    \Sigma_{T^{m}U} \subset \Sigma_{W}$ and $\Sigma_V \subset
    \Sigma_{T^nV} \subset \Sigma_W$.
  \end{proof}

  Given $X$ and $T$ as in Lemma~\ref{lem:common open}, let
  \begin{equation}
    \label{eq:5}
    \Sigma :=\bigcup_{\text{$\emptyset \not= U\subset X$ open}} \Sigma_{U}.
  \end{equation}
  We give $\N^{k}\times \N^{k}$ the usual partial order as a subset of
  $\N^{2k}$:
  \begin{equation*}
    \big((n_{i})^k_{i=1},(n_{i}')^k_{i=1}\big)\le
    \big((m_{i})^k_{i=1},(m_{i}')^k_{i=1}\big)
    \quad\text{if $n_{i}\le m_{i}$ and $n_{i}'\le m_{i}'$ for all $i$.}
  \end{equation*}
  We let $\Sigma^{\min}$ denote the collection of minimal elements
  of $\Sigma \setminus \{(0,0)\}$ with respect to this order.

  \begin{lemma}
    \label{lem:Sigma properties}
    Let $T$ be an irreducible action of $\N^{k}$ by local
    homeomorphisms on a locally compact space $X$, and let $\Sigma$
    and $\smin$ be as above.  Then $\Sigma$ is a submoniod of
    $\N^{k}\times\N^{k}$ and an equivalence relation on $\N^{k}$.  We
    have $\Sigma = (\Sigma - \Sigma) \cap (\N^k \times \N^k)$.
    Furthermore, $\smin$ is finite and generates $\Sigma$ as a monoid.
  \end{lemma}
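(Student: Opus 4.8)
The plan is to leverage the two structural facts already in hand: each $\Sigma_U$ is a submonoid of $\N^k \times \N^k$ and an equivalence relation on $\N^k$ (Lemma~\ref{lem:SigmaU properties}), and any two of the sets $\Sigma_U$, $\Sigma_V$ are dominated by a common $\Sigma_W$ (Lemma~\ref{lem:common open}). This directedness is exactly what lets a global assertion about $\Sigma$ be reduced to an assertion about a single $\Sigma_W$. For the first assertion, given $(m,n) \in \Sigma_U$ and $(p,q) \in \Sigma_V$ I would use Lemma~\ref{lem:common open} to find a nonempty open $W$ with $(m,n),(p,q) \in \Sigma_W$; since $\Sigma_W$ is a submonoid the sum $(m+p,n+q)$ lies in $\Sigma_W \subseteq \Sigma$, and since $\Sigma_W$ is an equivalence relation, transitivity gives the composite $(m,q)$ when $n=p$. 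Reflexivity ($(m,m) \in \Sigma_X$) and symmetry (from symmetry of each $\Sigma_U$) are immediate, so $\Sigma$ is a submonoid and an equivalence relation on $\N^k$.

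The crux is the identity $\Sigma = (\Sigma - \Sigma) \cap (\N^k \times \N^k)$. The inclusion $\subseteq$ is trivial since $(0,0) \in \Sigma$. For $\supseteq$, suppose $(m,n) = (p,q) - (r,s)$ with $(p,q),(r,s) \in \Sigma$ and $p \geq r$, $q \geq s$ coordinatewise, and choose a nonempty open $W$ with $(p,q),(r,s) \in \Sigma_W$. The key point is that $T^r$ is a local homeomorphism, hence an open map, so $T^r W$ is nonempty and open; moreover $T^r W = T^s W$ because $T^r x = T^s x$ on $W$. For $y = T^r x = T^s x$ with $x \in W$ one computes $T^{p-r} y = T^{p} x = T^{q} x = T^{q-s} y$, so $T^{p-r}$ and $T^{q-s}$ agree on $T^r W$ and therefore $(m,n) = (p-r,q-s) \in \Sigma_{T^r W} \subseteq \Sigma$. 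I expect this cancellation step to be the main obstacle: it is the one place where the openness of the maps $T^n$ and the precise bookkeeping of exponents on the pushed-forward set $T^r W$ genuinely matter.

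Finally, for the finiteness and generation of $\smin$: finiteness is Dickson's lemma, since the product order on $\N^{2k}$ is a well-quasi-order and hence $\Sigma \setminus \{(0,0)\}$ has only finitely many minimal elements. For generation I would induct on the total coordinate sum. Given $(m,n) \in \Sigma \setminus \{(0,0)\}$, the finite nonempty set of elements of $\Sigma \setminus \{(0,0)\}$ lying below $(m,n)$ has a minimal element $(a,b) \in \smin$ with $(a,b) \leq (m,n)$. Then $(m-a,n-b) = (m,n) - (a,b)$ lies in $(\Sigma - \Sigma) \cap (\N^k \times \N^k) = \Sigma$ by the identity just proved, and it has strictly smaller coordinate sum because $(a,b) \neq (0,0)$. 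By the inductive hypothesis $(m-a,n-b)$ is a sum of elements of $\smin$, and hence so is $(m,n) = (a,b) + (m-a,n-b)$. Thus $\smin$ generates $\Sigma$ as a monoid, completing the proof.
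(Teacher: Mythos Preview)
Your proof is correct and follows essentially the same strategy as the paper's: reduce each assertion about $\Sigma$ to the corresponding assertion about a single $\Sigma_W$ via Lemma~\ref{lem:common open}, handle the cancellation identity $\Sigma = (\Sigma-\Sigma)\cap(\N^k\times\N^k)$ by pushing forward along a power of $T$, invoke Dickson's Lemma for finiteness of $\smin$, and induct on coordinate sum for generation. Your cancellation computation is in fact slightly cleaner than the paper's---you push forward along $T^{r}$ to the open set $T^{r}W = T^{s}W$ and compute $T^{p-r}y = T^{p}x = T^{q}x = T^{q-s}y$ directly, whereas the paper pushes forward along $T^{p+q}$ and first assembles $(m+q,n+p)\in\Sigma_W$ from the monoid and symmetry axioms before computing; both arguments are the same idea.
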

  \begin{proof}
    We have $(0,0)\in\Sigma_{X} \subset \Sigma$.  If $(m,n), (p,q) \in
    \Sigma$, then there are nonempty open sets $U$ and $V$ such that
    $(m,n)\in\Sigma_{U}$ and $(p,q)\in\Sigma_{V}$.
    Lemma~\ref{lem:common open} yields an open set $W$ with $(m,n),
    (p,q) \in \Sigma_{W}$. Now $(m+p,n+q)\in\Sigma_{W}\subset \Sigma$
    by Lemma~\ref{lem:SigmaU properties}(\ref{it:SigmaMonoid}), so
    $\Sigma$ is a monoid.

    To see that $\Sigma$ is an equivalence relation, observe that it
    is reflexive and symmetric because each $\Sigma_{U}$ is. Consider
    $(m,n), (n,p) \in \Sigma$; say
    $(m,n)\in\Sigma_{U}$ and $(n,p)\in\Sigma_{V}$.  By
    Lemma~\ref{lem:common open}, there is open set $W$ with $(m,n),
    (n,p) \in \Sigma_W$. Hence $(m,p)\in\Sigma_{W}\subset\Sigma$ by
    Lemma~\ref{lem:SigmaU properties}(\ref{it:SigmaEquiv}).

    The containment $\Sigma \subset (\Sigma - \Sigma) \cap (\N^k
    \times \N^k)$ is trivial because $(0,0) \in \Sigma$ and $\Sigma
    \subset \N^k \times \N^k$.  For the reverse containment, suppose
    that $(m,n), (p,q) \in \Sigma$ and $m-p, n-q \in \N^k$. By
    Lemma~\ref{lem:common open} we may choose an open $W$ such that
    $(m,n), (p,q) \in \Sigma_W$. Fix $x \in T^{p+q}W$, say $x =
    T^{p+q}y$.  Lemma~\ref{lem:SigmaU properties} implies first that
    $(q,p) \in \Sigma_W$, and then that $(m+q, n+p) \in
    \Sigma_W$. Hence
    \[
    T^{m-p}x = T^{m-p}(T^{p+q}y) = T^{m+q}y = T^{n+p}y =
    T^{n-q}(T^{q+p}y) = T^{q-n}x.
    \]
    So $(m-p, n-q) \in \Sigma_{T^{p+q}W} \subset \Sigma$.

    Now we argue as in
    \cite{ckss:fja14}*{Proposition~4.4}.\footnote{Though in
      \cite{ckss:fja14}*{Proposition~4.4}, the crucial use, in the
      induction, of the fact that $\Sigma = (\Sigma - \Sigma) \cap
      (\N^k \times \N^k)$ is not made explicit.} Dickson's Lemma
    \cite{rosgar:finitely99}*{Theorem~5.1} implies that $\smin$ is
    finite. We must show that each $(m,n) \in \Sigma$ is a finite sum
    of elements of $\smin$. We argue by induction on $|(m,n)| :=
    \sum^k_{i=1} m_i + n_i$. If $|(m,n)| = 0$, the assertion is
    trivial. Now take $(m,n) \in \Sigma \setminus \{0\}$, and suppose
    that each $(p,q) \in \Sigma$ such that $|(p,q)| < |(m,n)|$ can be
    written as a finite sum of elements of $\smin$. Since
    $(m,n)\not=0$, by definition of $\smin$ there exists $(a,b) \in
    \smin$ such that $(a,b) \le (m,n)$. The preceding paragraph shows
    that $(p,q) = (m,n) - (a,b) \in (\Sigma - \Sigma) \cap (\N^k
    \times \N^k) = \Sigma$. The induction hypothesis implies that
    $(p,q)$ is a finite sum of elements of $\smin$, and then so is
    $(m,n) = (p,q) + (a,b)$.
  \end{proof}

  We let
  \begin{equation}\label{eq:H(T),Ymax}
    \begin{split}
      H(T)=\set{m-n : (m,n)\in\Sigma}\quad\text{and}\\
      \ymax:=\bigcup \set{Y\subset X:\text{$Y$ is open and
          $\Sigma_{Y}=\Sigma$}}.
    \end{split}
  \end{equation}
  \begin{lemma}
    \label{lem-pos-cone}
    Let $T$ be an irreducible action of $\N^k$ by local homeomorphisms
    of a locally compact Hausdorff space $X$. With $\Sigma$ as
    in~\eqref{eq:5}, we have
    \begin{equation}
      \label{eq:6}
      \Sigma=\sset{(m,n)\in\N^{k}\times\N^{k}: m-n\in H(T)}.
    \end{equation}
    The set $\ymax$ is nonempty and open, and is the maximal open set
    in $X$ such that $\Sigma_{\ymax}=\Sigma$.  We have
    $T^{m}\ymax\subset \ymax$ for all $m\in\N^{k}$.
  \end{lemma}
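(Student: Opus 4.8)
The plan is to treat the three assertions in turn, with the characterisation $\Sigma = (\Sigma - \Sigma)\cap(\N^k\times\N^k)$ from Lemma~\ref{lem:Sigma properties} doing most of the work for~\eqref{eq:6}, and the finiteness of $\smin$ doing most of the work for $\ymax$.

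For~\eqref{eq:6}, the inclusion $\Sigma \subseteq \sset{(m,n): m-n\in H(T)}$ is immediate from the definition of $H(T)$. For the reverse inclusion, I would suppose $m-n\in H(T)$, so that $m-n = p-q$ for some $(p,q)\in\Sigma$. Then $s:=m+q = n+p$ lies in $\N^k$, and the diagonal element $(s,s)$ lies in $\Sigma$ by reflexivity; since $\Sigma$ is an equivalence relation it is symmetric, so $(q,p)\in\Sigma$ as well. Because $(m,n) = (s,s)-(q,p)$, we get $(m,n)\in (\Sigma-\Sigma)\cap(\N^k\times\N^k) = \Sigma$ by Lemma~\ref{lem:Sigma properties}. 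This settles~\eqref{eq:6}.

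Next I would show that the collection $\sset{Y\subset X : Y \text{ open},\ \Sigma_Y = \Sigma}$ is nonempty, which is the crux of the $\ymax$ statement. Write $\smin = \sset{(a_1,b_1),\dots,(a_N,b_N)}$, which is finite by Lemma~\ref{lem:Sigma properties}. For each $i$ choose a nonempty open $U_i$ with $(a_i,b_i)\in\Sigma_{U_i}$. Iterating Lemma~\ref{lem:common open} produces a single nonempty open $W$ with $\bigcup_i \Sigma_{U_i}\subset\Sigma_W$; since $\Sigma_W$ is a submonoid containing the generating set $\smin$ of $\Sigma$, we get $\Sigma\subset\Sigma_W$, and the reverse inclusion is automatic, so $\Sigma_W = \Sigma$. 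Hence $\ymax$ is a nonempty open set. To see $\Sigma_{\ymax} = \Sigma$: the inclusion $\Sigma_{\ymax}\subset\Sigma$ holds because $\ymax$ is open; conversely, any $(m,n)\in\Sigma$ satisfies $T^m x = T^n x$ on each constituent $Y$ (as $\Sigma_Y = \Sigma$), hence on all of $\ymax$, so $(m,n)\in\Sigma_{\ymax}$. Maximality is then immediate, since every open $Y$ with $\Sigma_Y = \Sigma$ lies in the defining union and hence in $\ymax$. For the invariance $T^m\ymax\subset\ymax$, I note that $T^m$ is a local homeomorphism and hence an open map, so $T^m\ymax$ is a nonempty open set; by Lemma~\ref{lem:SigmaU properties}(\ref{it:SigmaInvariant}) we have $\Sigma = \Sigma_{\ymax}\subset\Sigma_{T^m\ymax}$, and the reverse inclusion holds because $T^m\ymax$ is open, so $\Sigma_{T^m\ymax} = \Sigma$ and $T^m\ymax$ belongs to the defining family for $\ymax$.

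The main obstacle is the nonemptiness step for $\ymax$: it is the only place where one must genuinely combine two earlier results---the finiteness of $\smin$ (via Dickson's Lemma in Lemma~\ref{lem:Sigma properties}) to reduce to finitely many generators, and Lemma~\ref{lem:common open} iterated to pass from the separate witnessing open sets $U_i$ to a single $W$ realising all of $\Sigma$ at once. Everything else is bookkeeping with the monotonicity and invariance properties of the assignment $U\mapsto\Sigma_U$ recorded in Lemma~\ref{lem:SigmaU properties}.
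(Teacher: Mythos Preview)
Your proof is correct and follows essentially the same route as the paper's: both invoke $(\Sigma-\Sigma)\cap(\N^k\times\N^k)=\Sigma$ from Lemma~\ref{lem:Sigma properties} for~\eqref{eq:6} (your decomposition $(m,n)=(m+q,n+p)-(q,p)$ is a slightly cleaner variant of the paper's $(m,n)=(p+a,q+a)-(b,b)$ with $m-p=a-b$), then iterate Lemma~\ref{lem:common open} over the finite set $\smin$ to produce a witness for $\ymax\neq\emptyset$, and finish via Lemma~\ref{lem:SigmaU properties}(\ref{it:SigmaInvariant}). You supply a little more detail than the paper in verifying $\Sigma_{\ymax}=\Sigma$ and the invariance step, but the argument is the same.
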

  \begin{proof}
    By definition, $\Sigma\subset \sset{(m,n):m-n\in H(T)}$.  For the
    reverse inclusion, suppose that $m-n=p-q$ with $(p,q)\in\Sigma$.
    Let $g=m-p\in\Z^{k}$.  Fix $a,b\in\N^{k}$ such that $g=a-b$.  Then
    both $(p+a,q+a)$ and $(b,b)$ belong to $\Sigma$.  Hence
    Lemma~\ref{lem:Sigma properties} implies that
    \begin{equation*}
      (m,n)=(p+g,q+g)=(p+a,q+a)-(b,b)\in (\Sigma - \Sigma) \cap
      (\N^{k}\times\N^{k})=\Sigma.
    \end{equation*}
    This establishes \eqref{eq:6}.

    Now $|\smin| - 1$ applications of Lemma~\ref{lem:common open} give
    a nonempty open set $Y$ such that $\smin\subset \Sigma_{Y}$.  Since
    $\Sigma_{Y}$ is monoid by Lemma~\ref{lem:SigmaU properties}, we
    have $\Sigma_{Y}=\Sigma$ by Lemma~\ref{lem:Sigma properties}.

    It now follows that $\ymax$ is open and nonempty. It is clearly
    maximal. Each $T^{m}\ymax\subset\ymax$ by Lemma~\ref{lem:SigmaU
      properties}(\ref{it:SigmaInvariant}) and the definition of
    $\ymax$.
  \end{proof}

  \begin{prop}
    \label{prop-iso-on-y-closed}
    Let $T$ be an irreducible action of $\N^k$ by local homeomorphisms
    of a locally compact Hausdorff space $X$, and let $G_{T} $ be the
    associated Deaconu--Renault groupoid (as in~\eqref{eq:2}). The set
    $H(T)$ of~\eqref{eq:H(T),Ymax} is a subgroup of $\Z^k$. Let $\Sigma$
    be as in~\eqref{eq:5}, and let $Y \subset X$ be an open set such that
    $\Sigma_{Y}=\Sigma$ and $T^{p}Y\subset Y$ for all $p\in\N^{k}$.
    Then $\intiso(G_{T}\restr Y)=\sset{(x,g,x):\text{$x\in Y$ and
        $g\in H(T)$}}$, and $\intiso(G_{T}\restr Y)$ is closed in
    $G_{T}\restr Y$.
  \end{prop}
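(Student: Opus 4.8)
The statement has three parts---that $H(T)$ is a subgroup of $\Z^k$, the explicit description of $\intiso(G_T\restr Y)$, and its closedness---and I would prove them in that order, the latter two resting on Lemma~\ref{lem-pos-cone}.

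For the subgroup claim I would read everything off the structure of $\Sigma$ recorded in Lemma~\ref{lem:Sigma properties}, namely that $\Sigma$ is a submonoid of $\N^k\times\N^k$ and an equivalence relation on $\N^k$. Since $(0,0)\in\Sigma$ we get $0\in H(T)$; symmetry of the equivalence relation sends $g=m-n\in H(T)$ to $-g=n-m\in H(T)$; and the monoid property sends $m-n,\ p-q\in H(T)$ to $(m+p)-(n+q)\in H(T)$. Thus $H(T)$ contains $0$ and is closed under negation and addition.

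For the description, set $S:=\sset{(x,g,x):x\in Y\text{ and }g\in H(T)}$. For the inclusion $S\subseteq\intiso(G_T\restr Y)$, given $x\in Y$ and $g=m-n\in H(T)$, Lemma~\ref{lem-pos-cone} gives $(m,n)\in\Sigma=\Sigma_Y$, so $T^m y=T^n y$ for every $y\in Y$. I would shrink to an open $U$ with $x\in U\subseteq Y$ on which $T^m$ is injective and check that $Z(U,m,n,U)\subseteq\Iso(G_T\restr Y)$: any $(y,m-n,y')\in Z(U,m,n,U)$ satisfies $T^m y=T^n y'=T^m y'$, forcing $y=y'$. Hence $Z(U,m,n,U)$ is an open subset of the isotropy containing $(x,g,x)$, so $(x,g,x)\in\intiso(G_T\restr Y)$. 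For the reverse inclusion I would take $(x,g,x)\in\intiso(G_T\restr Y)$ and use the refined basis of Lemma~\ref{lem-lch-top} to choose a bisection $Z(U,m,n,V)$ containing it and contained in $\Iso(G_T\restr Y)$, with $U,V\subseteq Y$ and $T^m|_U$, $T^n|_V$ homeomorphisms satisfying $T^m U=T^n V$. For each $y\in U$ there is a unique $y'\in V$ with $T^m y=T^n y'$, and membership in the isotropy forces $y'=y$; thus $T^m y=T^n y$ for all $y\in U$, i.e.\ $(m,n)\in\Sigma_U\subseteq\Sigma$, whence $g=m-n\in H(T)$ and $(x,g,x)\in S$. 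Converting ``the bisection lies in the isotropy'' into the uniform relation $(m,n)\in\Sigma_U$ is the step I expect to require the most care.

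For closedness I would invoke the cocycle $c\colon G_T\to\Z^k$, $c(x,g,y)=g$, of Lemma~\ref{lem:amenable}, which is continuous because it is constant on each basic set $Z(U,m,n,V)$ and $\Z^k$ is discrete; hence $c^{-1}(H(T))$ is clopen in $G_T\restr Y$. The description just established rewrites $\intiso(G_T\restr Y)=S=\Iso(G_T\restr Y)\cap c^{-1}(H(T))$, an intersection of the (always closed) isotropy subgroupoid with a clopen set, and so is closed. I would stress that this is exactly where irreducibility and the choice of $Y$ with $\Sigma_Y=\Sigma$ enter: they make the set $H(T)$ of admissible cocycle values independent of the base point, which is precisely what fails in the Remark's example and what makes $\intiso(G_T\restr Y)$ closed here.
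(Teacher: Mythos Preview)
Your argument is correct and, for the first two parts, essentially identical to the paper's: the subgroup property is read off the monoid/equivalence-relation structure of $\Sigma$ from Lemma~\ref{lem:Sigma properties}, and the two inclusions for the description of $\intiso(G_T\restr Y)$ are established via the refined basis of Lemma~\ref{lem-lch-top} exactly as you outline (indeed, your treatment of the reverse inclusion spells out the step ``$Z(U,m,n,V)\subset\Iso$ forces $T^m y=T^n y$ for all $y\in U$'' more carefully than the paper does).

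The one genuine difference is in the closedness argument. The paper simply appeals to \cite{kps:xx14}*{Proposition~2.1} applied to the cocycle $c$. You instead give a direct, self-contained proof: having identified $\intiso(G_T\restr Y)$ with $\Iso(G_T\restr Y)\cap c^{-1}(H(T))$, you observe that $c$ is locally constant (constant on each $Z(U,m,n,V)$), so $c^{-1}(H(T))$ is clopen, and the isotropy bundle is always closed. Your route avoids the external citation and makes transparent exactly where the hypothesis $\Sigma_Y=\Sigma$ is used---namely, to ensure that for every $x\in Y$ and every $g\in H(T)$ the element $(x,g,x)$ actually lies in $G_T\restr Y$, so that the displayed set really equals $\Iso(G_T\restr Y)\cap c^{-1}(H(T))$ rather than being a proper subset of it. The paper's citation packages this same idea as a general statement about groupoids carrying a cocycle into a discrete group.
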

  \begin{proof}
    Since $\Sigma$ is an equivalence relation, 0 belongs to $H(T)$, and
    $g\in H(T)$ implies $-g\in H(T)$. Suppose
    that $m,n \in H(T)$, say $m = p_1 - q_1$ and $n = p_2 - q_2$ with
    $(p_i, q_i) \in \Sigma$. Lemma~\ref{lem:Sigma properties} implies
    that $(p_1 + p_2, q_1 + q_2) \in \Sigma$, and therefore that $m+n
    = p_1 + p_2 - q_1 - q_2$ belongs to $H(T)$. So $H(T)$ is a
    subgroup of~$\Z^k$.

    Take $x \in Y$ and $g \in H(T)$. By Lemma~\ref{lem-pos-cone}, there
    exists $(p,q)\in\Sigma$ such that $g=p - q$. Choose an open
    neighbourhood $U$ of $x$ in $Y$ on which $T^p$ and $T^q$ are
    homeomorphisms. By choice of $Y$ we have $T^p y = T^q y$ for all
    $y \in U$, and hence $\sset{(y, g, y) : y \in U} = Z(U, p, q, U)$
    is an open neighbourhood of $(x, g, x)$ contained in $\sset{(y, g,
    y) : y \in Y, g \in H(T)}$. So $\sset{(y, g, y) : y \in Y, g \in
    H(T)} \subset \Iso(G_T)^\circ$. For the reverse inclusion,
    suppose that $(z,h,z) \in \Iso(G)^\circ$. By
    Lemma~\ref{lem-lch-top}, there exist $m,n \in \N^k$ and open sets
    $U,V \subset Y$ such that $(z, h, z) \in Z(U, m, n, V) \subset
    \Iso(G_T)$ with $T^m U = T^n V$.  So $T^m x = T^n x$ for all $x
    \in U$, and then $(m,n) \in \Sigma_U \subset \Sigma$. Thus $h \in
    H(T)$ as required.

    An application of \cite{kps:xx14}*{Proposition~2.1} to the $\Z^k$-valued
    cocycle $c : (x,g,x) \mapsto g$ on $G_{T}\restr Y$ shows that $\intiso(G_{T}\restr
    Y)$ is closed.
  \end{proof}

\begin{remark}
We have an opportunity to fill a gap in the literature. The penultimate paragraph of the
proof of the proof of \cite{ckss:fja14}*{Theorem~5.3}, appeals to
\cite{ckss:fja14}*{Corollary~2.8}. But unfortunately, the authors forgot to verify the
hypothesis of \cite{ckss:fja14}*{Corollary~2.8} that $\Gamma$ should be aperiodic. We
rectify this using our results above. Using the definition of aperiodicity of $\Gamma$
\cite{ckss:fja14}*{page~2575} and of the groupoid $G_\Gamma$ of $\Gamma$
\cite{ckss:fja14}*{page~2573} as in the proof of \cite{ckss:fja14}*{Corollary~2.8}, we
see that $\Gamma$ is aperiodic if and only if $G_\Gamma$ is topologically principal. In
the situation of \cite{ckss:fja14}*{Theorem~5.3}, the groupoid $G_{H\Lambda T}$ discussed
there is the restriction of the Deaconu--Renault groupoid $G_{\Lambda T}$ to $Y =
H\Lambda^\infty$ which has the properties required of $Y$ in
Proposition~\ref{prop-iso-on-y-closed} (see \cite{ckss:fja14}*{Theorem~4.2(2)}), and so
Proposition~\ref{prop-iso-on-y-closed} shows that $\intiso(G_{H\Lambda T})$ is closed. It
is easy to check that $G_\Gamma$ is isomorphic to $G_{H\Lambda T}/\intiso(G_{H\Lambda
T})$. So Proposition~\ref{prop-quotient-map}(\ref{it:quotient TP}) shows that $G_\Gamma$
is topologically principal and hence that $\Gamma$ is aperiodic as required.
\end{remark}

  \begin{cor}
    \label{cor-res-to-y}
    Let $T$ be an irreducible action of $\N^k$ by local homeomorphisms
    on a locally compact Hausdorff space $X$. Let $\Sigma$ and $H(T)$
    be as in \eqref{eq:5}~and~\eqref{eq:H(T),Ymax}.  Suppose that $Y$
    is an open subset of $X$ such that $T^{p}Y\subset Y$ for all $p$
    and such that $\Sigma_{Y}=\Sigma$.
    \begin{enumerate}
    \item\label{it:iotamap} Regard $C_{c}(G_{T}\restr Y)$ as a subalgebra of
        $C_{c}(G_{T})$. The identity map extends to a monomorphism $\iota :
        \cs(G_{T}\restr Y) \to \cs(G_{T})$, and $\iota(\cs(G_{T}\restr Y))$ is a
        hereditary subalgebra of $\cs(G_T)$.
    \item\label{it:circiota} The map $\pi\mapsto \pi\circ \iota$ is a bijection from
        the collection of irreducible representations of $\cs(G_{T})$ that are
        injective on $C_{0}(X)$ to the space of irreducible representations of
        $\cs(G_{T}\restr Y)$ that are injective on $C_{0}(Y)$.  Moreover, the map
        $\ker\pi\mapsto \ker(\pi\circ\iota)$ is a homeomorphism from
        $\bigl\{I\in\Prim \cs(G_{T}):I\cap C_{0}(X)=\sset0\bigr\}$ onto $
        \bigl\{J\in\Prim \cs(G_{T}\restr Y):J\cap C_{0}(Y)=\sset0\bigr\}$.
    \end{enumerate}
  \end{cor}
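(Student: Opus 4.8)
The plan is to identify $\iota(\cs(G_{T}\restr Y))$ with the hereditary subalgebra of $\cs(G_{T})$ generated by $C_{0}(Y)$, and then to deduce part~\partref{2} from the Rieffel correspondence, pinning down the two classes of representations by a support argument.

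For part~\partref{1}, note first that since $Y$ is open, $G_{T}\restr Y$ is an open subgroupoid of $G_{T}$, and because $T^{p}Y\subset Y$ it is precisely the Deaconu--Renault groupoid of the restricted action $T\restr Y$; in particular it is amenable by Lemma~\ref{lem:amenable}. The inclusion $\cc(G_{T}\restr Y)\hookrightarrow\cc(G_{T})$ (extension by $0$) is a $*$-homomorphism, and a function supported on $G_{T}\restr Y$ has the same $I$-norm whether computed in $G_{T}\restr Y$ or in $G_{T}$; hence every $I$-norm bounded representation of $\cc(G_{T})$ restricts to one of $\cc(G_{T}\restr Y)$, so the inclusion is $\cs$-norm decreasing and extends to a homomorphism $\iota$. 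Amenability gives $\cs=\cs_{r}$ for both groupoids, and the usual comparison of the regular representations $L^{y}$ shows the inclusion is isometric for the reduced norms, so $\iota$ is injective. Finally, for $\phi,\psi\in\cc(Y)$ and $f\in\cc(G_{T})$ one computes $(\phi*f*\psi)(\gamma)=\phi(r(\gamma))f(\gamma)\psi(s(\gamma))$; choosing $\phi,\psi$ identically $1$ on the (compact) range and source of the support of a given $f\in\cc(G_{T}\restr Y)$ yields $f=\phi*f*\psi$, whence $\iota(\cs(G_{T}\restr Y))=\overline{C_{0}(Y)\cs(G_{T})C_{0}(Y)}$ is the hereditary subalgebra generated by $C_{0}(Y)$.

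For part~\partref{2}, write $B=\iota(\cs(G_{T}\restr Y))$. The Rieffel correspondence for the hereditary subalgebra $B$ provides a bijection $\pi\mapsto\pi\restr B$ from the irreducible representations $\pi$ of $\cs(G_{T})$ with $\pi\restr B\not=0$ onto the irreducible representations of $B$, together with a homeomorphism $\ker\pi\mapsto\ker\pi\cap B$ from $\set{P\in\Prim\cs(G_{T}):B\not\subset P}$ onto $\Prim B$. Transporting these along the isomorphism $\iota$ turns them into $\pi\mapsto\pi\circ\iota$ and $\ker\pi\mapsto\ker(\pi\circ\iota)$, respectively. It then remains only to match up the two faithfulness conditions.

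The key point is that $C_{0}(Y)\subset B$ and $\iota$ restricts to the inclusion $C_{0}(Y)\hookrightarrow C_{0}(X)$, so for any irreducible $\pi$ we have that $\pi\circ\iota$ is faithful on $C_{0}(Y)$ if and only if $\ker\pi\cap C_{0}(Y)=\sset0$. Writing $M=\pi\restr{C_{0}(X)}$, Proposition~\ref{prop-invar-ideals} and Lemma~\ref{lem-g-irr} identify the support of $M$ with an orbit closure $\overline{[x]}$, and $\ker\pi\cap C_{0}(Y)=\sset0$ holds precisely when $Y\subset\overline{[x]}$. Here the hypotheses bite: $X$ has a dense orbit $[x_{0}]$, which meets the nonempty open set $Y$ in some point $y_{0}$, so $\overline{[y_{0}]}=\overline{[x_{0}]}=X$; thus any closed invariant set containing $Y$ contains $y_{0}$, hence $[y_{0}]$, and so equals $X$. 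In particular $\overline{[x]}=X$, forcing $M$, and therefore $\pi$, to be faithful on $C_{0}(X)$. The converse implication is immediate, and in that case $\pi\restr B\not=0$ because $\pi$ is already faithful on $C_{0}(Y)\subset B$, placing $\ker\pi$ in $\set{P:B\not\subset P}$. Hence the Rieffel bijection and homeomorphism restrict to the asserted bijection and homeomorphism between the $C_{0}(X)$-faithful and $C_{0}(Y)$-faithful classes. The main obstacle is exactly this last equivalence: the hereditary-subalgebra machinery is standard, whereas ruling out proper orbit closures containing $Y$ is where irreducibility of the action and forward-invariance of $Y$ are essential.
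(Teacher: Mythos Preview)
Your proof is correct and follows essentially the same route as the paper: injectivity of $\iota$ via amenability and regular representations, the identification of $\iota(\cs(G_T\restr Y))$ with the hereditary subalgebra $\overline{C_0(Y)\cs(G_T)C_0(Y)}$, and then the Rieffel correspondence between irreducibles/primitive ideals of a $C^*$-algebra and of a hereditary subalgebra. The only notable difference is in the step showing that faithfulness on $C_0(Y)$ forces faithfulness on $C_0(X)$: the paper invokes $G_T$-simplicity of $C_0(X)$ directly, whereas you argue concretely that the dense orbit meets the open set $Y$, so any closed invariant set containing $Y$ must be all of $X$---which is a cleaner justification of the same fact.
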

  \begin{proof}
    The inclusion $C_c(G_T|_Y) \hookrightarrow C_c(G_T)$ is a
    $*$-homomorphism and continuous in the inductive-limit topology.
    Hence we get a homomorphism $\iota$.  Fix $x\in Y$.  Let $L^{x}$
    be the regular representation of $\cs(G_{T})$ on
    $\ell^{2}((G_{T})_{x})$.  Then $L^{x}\circ \iota$ leaves the
    subspace $\ell^{2}{\sset{(y,g,x) \in G_T :y\in Y}})$
    invariant. Hence $L^{x}\circ \iota$ is equivalent to
    $L_{Y}^{x}\oplus 0$ where $L_{Y}^{x}$ is the corresponding regular
    representation of $\cs(G_{T}\restr Y)$.  Since $G_{T}$ and
    $G_{T}\restr Y$ are both amenable by Lemma~\ref{lem:amenable},
    $\iota$ is isometric and hence a monomorphism.

    Let $\sset{f_{i}}$ be an approximate identity for $C_{0}(Y)$. For
    $f\in C_{c}(G_{T})$ we have $f_{i}ff_{i}\in C_{c}(G_{T}\restr Y)$.
    Thus $\iota(\cs(G_{T}\restr Y))$ is the closure of
    $\bigcup_{i}f_{i}\cs(G_{T})f_{i}$.  It follows easily that the
    image of $\iota$ is a hereditary subalgebra of $\cs(G_{T})$ as
    claimed.

    If $\pi$ is an irreducible representation of $\cs(G_{T})$ that is
    injective on $C_{0}(X)$, then it does not vanish on the ideal
    $I_{Y}$ in $\cs(G_{T})$ generated by $C_{0}(Y)$.  Clearly,
    $\iota(\cs(G_{T}\restr Y))$ is Morita equivalent to $I_{Y}$, and restriction of
    representations implements Rieffel induction from $I_{Y}$ to
    $\iota(\cs(G_{T}\restr Y))$.  Since Rieffel induction between Morita
    equivalent \cs-algebras takes irreducibles to irreducibles
    (\cite{rw:morita}*{Corollary~3.32}) and since $\pi\restr {I_{Y}}$
    is irreducible (\cite{arv:invitation}*{Theorem~1.3.4}),
    $\pi\circ\iota$ is irreducible and clearly injective on
    $C_{0}(Y)$.  If $\rho$ is an irreducible representation of
    $\cs(G_{T}\restr Y)$, then it extends to an irreducible
    representation of $I_{Y}$.  Since $I_{Y}$ is an ideal, this
    representation extends to a (necessarily irreducible)
    representation $\pi$ of $\cs(G_{T})$ such that $\rho=\pi\circ
    \iota$.  The kernel of $\pi\restr {C_{0}(X)}$ is proper and has
    $\N^{k}$-invariant support.  Since $T$ acts irreducibly,
    $C_{0}(X)$ is $G_T$-simple, and so
    $\ker\bigl(\pi\restr {C_{0}(X)}\bigr) = \sset0$ and we
    obtain the required bijection.

    The remaining assertion follows from this bijection and the
    Rieffel correspondence (see \cite{rw:morita}*{Corollary~3.33(a)}).
  \end{proof}

  \section{The primitive ideals of the \texorpdfstring{$C^*$}{C*}-algebra of an irreducible
    Deaconu--Renault groupoid}
  \label{sec:prim-ideal-space}

  In this section we specialize to the situation where $T$ is an
  irreducible action of $\N^{k}$ on a locally compact Hausdorff space
  $Y$ with the property that, in the notation of~\eqref{eq:5},
  $\Sigma_{Y}=\Sigma$. We then have $\Sigma=\Sigma_{U}$ for all
  nonempty open subsets $U$ of $Y$ by Lemma~\ref{lem:SigmaU
    properties}.  Lemma~\ref{lem-pos-cone} says that $m-n \in H(T)$
  implies $T^{m}x=T^{n}x$ for all $x\in Y$; and
  Proposition~\ref{prop-iso-on-y-closed} gives
  \[
  \intiso(G_{T})=\sset{(x,g,x):\text{$x\in Y$ and $g\in H(T)$}}.
  \]
  We show that under these hypotheses, the primitive ideals of
  $\cs(G_{T})$ with trivial intersection with $C_{0}(Y)$ are indexed
  by characters of $H(T)$.  More precisely, we show that the
  irreducible representations of $\cs(G_{T})$ that are faithful on
  $C_{0}(Y)$ are indexed by pairs $(\pi,\chi)$ where $\pi$ is an
  irreducible representation of $\cs(G_{T}/\intiso(G_{T}))$ and $\chi$
  is a character of $H(T)$.  Our approach is to exhibit $\cs(G_{T})$
  as an induced algebra. Recall from
  Proposition~\ref{prop-iso-on-y-closed} that $\intiso(G_T)$ is closed
  in $G_T$, so Proposition~\ref{prop-quotient-map} gives a
  homomorphism $\kappa : \cs(G_T) \to \cs(G_T/\intiso(G_T))$.

  \begin{lemma}
    \label{lem:actions} Suppose that $T$ is an irreducible action of
    $\N^{k}$ on a locally compact space $Y$ such that
    $\Sigma_{Y}=\Sigma$.  There is an action $\alpha$ of $\T^{k}$ on
    $\cs(G_{T})$ such that $\alpha_{z}(f)(x,g,y)=z^{g}f(x,g,y)$ for
    $f\in \cc(G_{T})$.  Let $\qs:\cs(G_{T})\to
    \cs(G_{T}/\intiso(G_{T}))$ be the homomorphism of
    Proposition~\ref{prop-quotient-map}. There is an action $\alphat$
    of $H(T)^{\perp}$ on $\cs(G_{T}/\intiso(G_{T}))$ such that
    $\alphat_{z}\circ \qs = \qs\circ \alpha_{z}$ for all $z\in
    H(T)^{\perp}\subset \T^{k}$.

    If $\bar zw\notin H(T)^{\perp}$, then $\big(\ker(\qs\circ
    \alpha_{z}) + \ker(\qs\circ \alpha_{w})\big) \cap C_0(Y) \not=
    \{0\}$. We have $\ker(\qs\circ\alpha_{z}) = \ker(\qs\circ
    \alpha_{w})$ if and only if $\bar zw\in H(T)^{\perp}$.
  \end{lemma}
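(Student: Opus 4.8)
The plan is to establish the four assertions in turn: the first two are routine constructions of gauge-type automorphisms from the cocycle $c:(x,g,y)\mapsto g$, the real content is in the third, and the fourth is then a formal consequence.

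For the action $\alpha$, I would note that $c$ is a continuous groupoid homomorphism $G_T\to\Z^k$, so for each $z\in\T^k$ the formula $\alpha_z(f)(\gamma)=z^{c(\gamma)}f(\gamma)$ defines a linear map on $\cc(G_T)$. Since $c(\gamma)+c(\gamma^{-1}\eta)=c(\eta)$ and $c(\gamma^{-1})=-c(\gamma)$, a one-line computation shows $\alpha_z$ respects convolution and involution; because $|z^{c(\gamma)}|=1$ it preserves the $\|\cdot\|_I$-norm and is continuous for the inductive-limit topology, and with inverse $\alpha_{\bar z}$ it extends to a $*$-automorphism of $\cs(G_T)$. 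An $\epsilon/3$ argument gives strong continuity of $z\mapsto\alpha_z$. For $\alphat$, I would use that $c(\alpha)\in H(T)$ whenever $\alpha\in\intiso(G_T)$ (Proposition~\ref{prop-iso-on-y-closed}), so $c$ descends to a continuous cocycle $\bar c:G_T/\intiso(G_T)\to\Z^k/H(T)$. Identifying $H(T)^\perp$ with $\widehat{\Z^k/H(T)}$, the same construction applied to the $\T$-valued cocycle $b\mapsto z^{\bar c(b)}$ produces an action $\alphat$ of $H(T)^\perp$ on $\cs(G_T/\intiso(G_T))$; and since $z^{c(\gamma)}=z^{\bar c(q(\gamma))}$ for $z\in H(T)^\perp$, comparing the defining formula for $\qs$ with those for $\alpha_z$ and $\alphat_z$ on $\cc(G_T)$ gives $\alphat_z\circ\qs=\qs\circ\alpha_z$ on $\cc(G_T)$, hence on $\cs(G_T)$ by density.

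The heart of the lemma, and where I expect the real work, is the third assertion. Two preliminary observations drive it: $\alpha_z$ fixes $C_0(Y)$ pointwise (the cocycle vanishes on $\go_T$), and $\qs$ is injective on $C_0(Y)$ (distinct units lie in distinct $\intiso(G_T)$-orbits), so for a single $z$ the map $\qs\circ\alpha_z$ is injective on $C_0(Y)$. To exploit a direction $g_0\in H(T)$ with $z^{g_0}\neq w^{g_0}$, I would write $g_0=p-q$ with $(p,q)\in\Sigma$ (Lemma~\ref{lem-pos-cone}) and pick a nonempty open $U\subseteq Y$ small enough that $T^p|_U$ and $T^q|_U$ are homeomorphisms; since $T^px=T^qx$ on $Y$ we get $T^pU=T^qU$ automatically, so $Z(U,p,q,U)=\sset{(x,g_0,x):x\in U}$ is an open bisection. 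Choosing a nonzero $\psi\in\cc(U)$, I set $u\in\cc(G_T)$ supported on $Z(U,p,q,U)$ with $u(x,g_0,x)=\psi(x)$, and let $v=\psi\in C_0(Y)$. As $u$ and $v$ are supported on the isotropy, $\qs\circ\alpha_z$ carries both into $C_0(q(\go_T))\cong C_0(Y)$, with $\qs(\alpha_z(u))=z^{g_0}\psi$ and $\qs(\alpha_z(v))=\psi$, and likewise for $w$. Then $a:=u-z^{g_0}v\in\ker(\qs\circ\alpha_z)$ and $b:=u-w^{g_0}v\in\ker(\qs\circ\alpha_w)$, while $a-b=(w^{g_0}-z^{g_0})v$ is a nonzero element of $C_0(Y)$, exhibiting the required nonzero member of $\bigl(\ker(\qs\circ\alpha_z)+\ker(\qs\circ\alpha_w)\bigr)\cap C_0(Y)$.

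Finally, for the fourth assertion: if $\bar zw\in H(T)^\perp$, then writing $\alpha_w=\alpha_{\bar zw}\circ\alpha_z$ and applying the intertwining relation from the second assertion gives $\qs\circ\alpha_w=\alphat_{\bar zw}\circ\qs\circ\alpha_z$, so the two kernels coincide because $\alphat_{\bar zw}$ is an automorphism. Conversely, if $\bar zw\notin H(T)^\perp$, the third assertion produces a nonzero element of $C_0(Y)$ in $\ker(\qs\circ\alpha_z)+\ker(\qs\circ\alpha_w)$; were the kernels equal this sum would be a single kernel meeting $C_0(Y)$ nontrivially, contradicting the injectivity of $\qs\circ\alpha_z$ on $C_0(Y)$ noted above. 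Thus the kernels coincide precisely when $\bar zw\in H(T)^\perp$. The only genuinely delicate point is the construction in the third assertion, in particular the verification that $Z(U,p,q,U)$ is an open bisection so that $u$ is a legitimate element of $\cc(G_T)$; this is exactly where the hypothesis $\Sigma_Y=\Sigma$ and the local-homeomorphism structure enter.
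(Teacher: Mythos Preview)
Your proof is correct and follows essentially the same approach as the paper's. The only cosmetic difference is in the third assertion: the paper observes that for $n\in H(T)$ the set $\{(x,n,x):x\in Y\}=\intiso(G_T)\cap c^{-1}(n)$ is clopen in $G_T$ (by Proposition~\ref{prop-iso-on-y-closed} and continuity of $c$), so one may take any nonzero $f\in C_c(Y)$ and transport it directly to $f_n\in C_c(G_T)$ without first shrinking to a bisection $U$; your more careful construction via $Z(U,p,q,U)$ achieves the same end.
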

  \begin{proof}
    Let $c : G_T \to \Z^k$ be the canonical cocycle $c(x, g, y) = g$.
    The formula $\alpha_{z}(f)(\gamma)=z^{c(\gamma)}f(\gamma)$ defines
    a $*$-homomorphism $\alpha_z : \cc(G_{T}) \to \cc(G_T)$.  This
    $\alpha_z$ is trivially $I$-norm preserving, so extends to
    $\alpha_z : \cs(G_T) \to \cs(G_T)$. Since $\alpha_{\bar z}$ is an
    inverse for $\alpha_z$, we have $\alpha_z \in \Aut(\cs(G_T))$.
    The map $z \mapsto \alpha_z$ is a homomorphism because
    $\alpha_{zw}$ and $\alpha_z \circ \alpha_w$ agree on each
    $\cc(c^{-1}(g))$. To see that $z \mapsto \alpha_z$ is strongly
    continuous, first note that if $f \in C_c(G_T)$ is supported on
    $c^{-1}(g)$, then each $\alpha_z(f) = z^g f$, so $z \mapsto
    \alpha_z(f)$ is continuous. Since each $f \in C_c(G_T)$ is a
    finite linear combination $f = \sum_{\supp(f) \cap c^{-1}(g) \not=
      \emptyset} f|_{c^{-1}(g)}$ of such functions, $z \mapsto
    \alpha_z(f)$ is continuous for each $f \in C_c(G_T)$. Now an
    $\varepsilon/3$ argument shows that $z\mapsto\alpha_{z}$ is
    strongly continuous.

    Let $q:\Z^{k}\to \Z^{k}/H(T)$ be the quotient map. We have,
    \begin{equation*}
      \intiso(G_{T})=\sset{(x,g,x):\text{$x\in Y$ and $g\in H(T)$}}.
    \end{equation*}
    Identify $G_{T}/\intiso(G_{T})$ with $\sset{(x,q(g),y):(x,g,y)\in
      G_{T}} \subset Y\times (\Z^{k}/H(T)) \times Y$.

    Proposition~\ref{prop-quotient-groupoid} implies that the quotient
    map from $G_{T}$ onto $G_{T}/\intiso(G_{T})$ is continuous and
    open, so the sets
    \[
    \tZ(U,q(m),q(n),V) = \sset{(x,q(m-n),y):\text{$x\in U$, $y\in V$
        and $T^{m}x=T^{n}y$}}
    \]
    are a basis for the topology on $G_T/\intiso(G_T)$ (this makes
    sense because $T^{m}x=T^{n}y$ if and only if $T^{m+a}x=T^{n+b}y$
    whenever $a-b\in H(T)$).  Arguing as in the first paragraph, we
    get an action $\alphat$ of $H(T)^{\perp}$ on
    $\cs(G_{T}/\intiso(G_{T}))$ such that
    $\alphat_{z}(f)(x,q(g),y)=z^{g}f(x,q(g),y)$ for $f\in
    \cc(G_{T}/\intiso(G_{T}))$. For $f\in\cc(G_{T})$, it is easy to
    check that $\alphat_{z}\circ \qs(f)= \qs\circ \alpha_{z}(f)$ for
    $z \in H(T)^\perp$.  This identity then extends by continuity to
    all of $\cs(G_{T})$.

    Suppose that $\bar zw \notin H(T)^{\perp}$. Choose $n\in H(T)$
    such that $z^{n} \not= w^n$.  Fix a nonzero function $f\in \cc(Y)$
    and define $f_{n}\in \cc(\sset{(x,n,x):x\in Y}\subset \cc(G_{T})$
    by $f_{n}(x,n,x)=f(x,0,x)$ for all $x\in Y$. Then $w^n f-f_{n}\in
    \ker(\qs \circ \alpha_w)$ and $z^{n}f-f_{n}\in \ker(\qs\circ
    \alpha_{z})$.  Hence $(z^n - w^n)f \in \big(\ker(\qs \circ
    \alpha_z) + \ker(\qs\circ \alpha_{w})\big) \cap C_0(Y) \setminus
    \{0\}$ by choice of $n$. This proves the second-last statement of
    the lemma.

    Since each of $\qs\circ\alpha_z$ and $\qs\circ\alpha_w$ is
    injective on $C_0(Y)$, this also proves the (contrapositive of
    the) implication $\implies$ in the final statement of the
    lemma. For the reverse implication, suppose that $\bar zw\in
    H(T)^{\perp}$. Then
    \[
    \ker(\qs\circ \alpha_{w}) = \ker(\qs \circ \alpha_{\bar zw} \circ
    \alpha_z) = \ker(\alphat_{\bar zw} \circ \qs \circ \alpha_z) =
    \ker(\qs\circ\alpha_z).\qedhere
    \]
  \end{proof}

  The final assertion of Lemma~\ref{lem:actions} ensures that we can
  form the induced algebra
  $\Ind_{H(T)^{\perp}}^{\T^{k}}\bigl(\cs(G_{T}/\intiso(G_{T})),\alphat\bigr)$,
  namely
  \[
  \sset{s\in C(\T^{k},\cs(G_{T}/\intiso(G_{T})):
    \text{$s(wz)=\alphat_{z}(s(w))$ for all $w\in\T^{k}$ and $z\in
      H(T)^{\perp}$}}.
  \]
  Induced algebras have a well-understood structure.  Some of their
  elementary properties (in particular, the ones that we rely upon)
  are discussed in \cite{rw:morita}*{\S6.3}.

  Before proving the next result, we recall some basic
  results from abelian harmonic analysis.  We
  write $\cc(H(T))$ for the set of finitely supported functions on
  $H(T)$.  If $\phi\in \cc(H(T))$, then its Fourier transform
  $\hat\phi \in C(\T^k)$ is given by
  \begin{equation*}
    \hat\phi(z)=\sum_{n\in H(T)}\phi(n)z^{n},
  \end{equation*}
  and is constant on $H(T)^{\perp}$ cosets.  Taking a few liberties
  with notation and terminology, we regard $\hat \phi$ as an element
  of $C(\T^{k}/H(T)^{\perp})$.  The general theory implies that
  $\sset{\hat\phi:\phi\in\cc(H(T))}$ is a (uniformly) dense subalgebra
  of $C(\T^{k}/H(T)^{\perp})$.

  \begin{lemma}
    \label{lem-Ht-closed}
    Let $T$ be an irreducible action of $\N^{k}$ on a locally compact
    space $Y$ by local homeomorphisms, and suppose that
    $\Sigma_{Y}=\Sigma$.  If $(x,g,y)\in G_{T}$, then $(x,g+n,y) \in
    G_T$ for all $n\in H(T)$.
  \end{lemma}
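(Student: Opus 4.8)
The plan is to exhibit, straight from the definitions, an explicit pair of nonnegative exponents witnessing that $(x,g+n,y)$ lies in $G_{T}$. First I would unpack both hypotheses. Since $(x,g,y)\in G_{T}$, there are $a,b\in\N^{k}$ with $g=a-b$ and $T^{a}x=T^{b}y$. Since $n\in H(T)$, the definition of $H(T)$ in~\eqref{eq:H(T),Ymax} furnishes a pair $(p,q)\in\Sigma$ with $n=p-q$; and because $\Sigma_{Y}=\Sigma$ while $y\in Y$ is the source unit of $(x,g,y)$, the defining property of $\Sigma_{Y}$ yields the key relation $T^{p}y=T^{q}y$.

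The natural candidate witness is then $(a+p,\,b+q)$: its entries lie in $\N^{k}$, and its difference is $(a+p)-(b+q)=g+n$. The only thing left to verify is the groupoid equality $T^{a+p}x=T^{b+q}y$, and this I would check by a short computation that exploits the commutativity of the $T^{m}$ together with the relation $T^{p}y=T^{q}y$: namely $T^{a+p}x=T^{p}(T^{a}x)=T^{p}(T^{b}y)=T^{b}(T^{p}y)=T^{b}(T^{q}y)=T^{b+q}y$. Thus $(x,g+n,y)=(x,(a+p)-(b+q),y)\in G_{T}$, as required.

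I do not expect any genuine obstacle: the statement is essentially a bookkeeping lemma. The one point needing a little care is that membership in $G_{T}$ is recorded through nonnegative exponents, so one cannot merely add $n$ to $g$; instead one must augment the positive part $a$ and the negative part $b$ of $g$ simultaneously, by $p$ and $q$ respectively. What makes this simultaneous augmentation compatible with the defining equality $T^{a}x=T^{b}y$ is precisely the relation $T^{p}y=T^{q}y$ supplied by the hypothesis $\Sigma_{Y}=\Sigma$, and so that hypothesis is exactly where the (very modest) work resides.
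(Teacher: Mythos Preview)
Your proof is correct and essentially identical to the paper's: both pick witnesses $a,b$ for $g$ and $p,q$ for $n$ and verify $T^{a+p}x=T^{b+q}y$ via a short commutativity chain using $\Sigma_{Y}=\Sigma$. The only cosmetic difference is that you invoke $T^{p}y=T^{q}y$ at the point $y$, while the paper invokes it at $T^{b}y$; since here $Y$ is the whole space, both are valid.
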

  \begin{proof}
    Let $(x,g,y)=(x,p-q,y)$ with $T^{p}x=T^{q}y$.  Fix $n\in
    H(T)$. Then $n=n_+-n_-$ with $(n_+,n_-)\in \Sigma=\Sigma_{Y}$.
    Hence $T^{n_+}z=T^{n_-}z$ for all $z\in Y$, giving
    \begin{equation*}
      T^{p+n_+}x=T^{n_+}T^{p}x=T^{n_+}T^{q}y=T^{n_-}T^{q}y=T^{q+n_-}y.
    \end{equation*}
    Hence $(x,g+n,y) = (x,(p+n_+)-(q+n_-),y) \in G_{T}$.
  \end{proof}

  Because of Lemma~\ref{lem-Ht-closed}, we can define a left action of
  $\cc(H(T))$ on $\cc(G_{T})$ by
  \begin{equation}
    \label{eq:7}
    \phi\cdot f(x,g,y):=\sum_{n\in H(T)} \phi(n) f(x,g-n,y).
  \end{equation}

  \begin{lemma}
    \label{lem-cts-linear}
    Let $T$ be an irreducible action of $\N^{k}$ on a locally compact
    space $Y$ by local homeomorphisms such that $\Sigma_{Y}=\Sigma$,
    and let $\kappa : \cs(G_T) \to \cs(G_T/\intiso(G_T))$ be as in
    Proposition~\ref{prop-quotient-map}.  Then
    \begin{equation*}
      \qs(\alpha_{z}(\phi\cdot f))=\hat\phi(z)\qs(\alpha_{z}(f))
    \end{equation*}
    for all $f\in\cc(G_{T})$, all $z\in\T^{k}$, and all $\phi \in
    \cc(H(T))$.
  \end{lemma}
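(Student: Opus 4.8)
The plan is to verify the identity pointwise on $G_{T}/\intiso(G_{T})$, since both sides lie in $\cc(G_{T}/\intiso(G_{T}))$ (the right-hand side because $\hat\phi(z)$ is a scalar) and such functions are determined by their values. The one preliminary I need is an explicit form of $\qs$ under the identification of $G_{T}/\intiso(G_{T})$ with $\sset{(x, q(g), y) : (x,g,y) \in G_{T}}$ used in Lemma~\ref{lem:actions}, where $q : \Z^{k} \to \Z^{k}/H(T)$ is the quotient map. For $h \in \cc(G_{T})$ and $b = (x, q(g_{0}), y)$, the defining formula of Proposition~\ref{prop-quotient-map} becomes
\[
\qs(h)(b) = \sum_{g \in g_{0} + H(T)} h(x, g, y),
\]
a finite sum because $h$ has compact support. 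Here Lemma~\ref{lem-Ht-closed} is exactly what guarantees that the fibre over $b$ is the \emph{whole} coset $g_{0}+H(T)$, since it ensures every triple $(x, g, y)$ with $g \in g_{0}+H(T)$ indeed lies in $G_{T}$.

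Applying this with $h = \alpha_{z}(\phi\cdot f)$ and unwinding the definitions of $\alpha_{z}$ and of $\phi\cdot f$ in~\eqref{eq:7}, I would write the fibre variable as $g = g_{0}+m$ with $m \in H(T)$ to obtain the finite double sum
\[
\qs(\alpha_{z}(\phi\cdot f))(b) = \sum_{m \in H(T)} z^{g_{0}+m} \sum_{n \in H(T)} \phi(n)\, f(x, g_{0}+m-n, y).
\]
The decisive step is then the substitution $j = m-n$, under which $z^{g_{0}+m} = z^{n}\,z^{g_{0}+j}$ and the two summations decouple:
\[
\qs(\alpha_{z}(\phi\cdot f))(b) = \Bigl(\sum_{n \in H(T)} \phi(n) z^{n}\Bigr)\Bigl(\sum_{j \in H(T)} z^{g_{0}+j}\, f(x, g_{0}+j, y)\Bigr).
\]
The first factor is $\hat\phi(z)$ by definition, and by the fibre formula again the second factor is $\qs(\alpha_{z}(f))(b)$; since $b$ was arbitrary, this is the asserted identity.

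The main obstacle is bookkeeping rather than analysis. Everything hinges on correctly pinning down the fibre of the quotient map over $b$ as the full coset $g_{0}+H(T)$ in the $\Z^{k}$-coordinate, which is where Lemma~\ref{lem-Ht-closed} is indispensable: a priori a translate $(x, g_{0}+n, y)$ need not lie in $G_{T}$, and without that closure property the second factor above would fail to reassemble into $\qs(\alpha_{z}(f))(b)$. Once the fibre is identified, all sums involved are finite, so the interchange and reindexing are automatically legitimate and the remainder is a purely formal rearrangement of a convolution-type sum.
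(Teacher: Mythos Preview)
Your proof is correct and follows essentially the same approach as the paper: a direct pointwise computation of $\qs(\alpha_{z}(\phi\cdot f))$ at a generic point $b=(x,q(g),y)$, followed by a reindexing of the resulting finite double sum to split off the factor $\hat\phi(z)$. The paper reindexes via $m\mapsto m+n$ where you substitute $j=m-n$, which is the same manipulation; your explicit invocation of Lemma~\ref{lem-Ht-closed} to identify the fibre with the full coset $g_{0}+H(T)$ makes transparent a point the paper leaves implicit.
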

  \begin{proof}
    We compute:
    \begin{align*}
      \qs(\alpha_{z}(\phi\cdot f))(x,q(g),y) &= \sum_{m\in H(T)}
      \alpha_{z}(\phi\cdot f)(x,g+m,y) \\
      &= \sum_{m\in H(T)} z^{g+m} \phi\cdot f(x,g+m,y) \\
      &= \sum_{m\in H(T)}\sum_{n\in H(T)}z^{g+m}\phi(n) f(x,g+m-n,y).
      \intertext{Since both sums are finite and we can interchange the
        order of summations at will, we may continue the calculation:}
      &=  \sum_{m\in H(T)}\sum_{n\in H(T)} z^{g+m+n}\phi(n) f(x,g+m,y) \\
      &=
      \sum_{m\in H(T)} z^{g+m} \hat\phi(z) f(x,g+m,y) \\
      &= \hat\phi(z) \qs(\alpha_{z}(f))(x,q(g),y).\qedhere
    \end{align*}
  \end{proof}

  \begin{prop}
    \label{prp:induced alg}
    Let $T$ be an irreducible action of $\N^{k}$ on a locally compact
    space $Y$ by local homeomorphisms, and suppose that
    $\Sigma_{Y}=\Sigma$.  Let
    \[
    \alpha:\T^{k}\to \Aut\cs(G_{T})
    \quad\text{ and }\quad
    \tilde\alpha : H(T)^\perp \to \Aut\cs(G_T/\intiso(G_T))
    \]
    be as in Lemma~\ref{lem:actions}, and let $\kappa : \cs(G_T) \to
    \cs(G_T/\intiso(G_T))$ be as in
    Proposition~\ref{prop-quotient-map}. There is an isomorphism $\Phi
    : \cs(G_T) \to \Ind_{H(T)^{\perp}}^{\T^{k}}
    (\cs(G_{T}/\intiso(G_{T})), \alphat)$ such that
    $\Phi(a)(z)=\qs(\alpha_{z}(a))$ for $a \in \cs(G_T)$ and all $z
    \in \T$.
  \end{prop}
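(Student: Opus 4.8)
The plan is to verify directly that the formula $\Phi(a)(z) = \qs(\alpha_{z}(a))$ defines an isomorphism, establishing in turn that $\Phi$ is a well-defined $*$-homomorphism into the induced algebra, that it is injective, and that it is surjective. For the first step I would note that $z \mapsto \alpha_{z}(a)$ is norm-continuous (Lemma~\ref{lem:actions}) and $\qs$ is contractive, so $z \mapsto \qs(\alpha_{z}(a))$ lies in $C(\T^{k}, \cs(G_{T}/\intiso(G_{T})))$; the required covariance $\Phi(a)(wz) = \alphat_{z}(\Phi(a)(w))$ for $z \in H(T)^{\perp}$ is exactly the intertwining identity $\qs \circ \alpha_{z} = \alphat_{z} \circ \qs$ of Lemma~\ref{lem:actions}, applied to $\alpha_{wz} = \alpha_{z}\circ\alpha_{w}$. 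Since $\qs$ and each $\alpha_{z}$ are $*$-homomorphisms and the operations on the induced algebra are pointwise, $\Phi$ is a $*$-homomorphism into $\Ind_{H(T)^{\perp}}^{\T^{k}}(\cs(G_{T}/\intiso(G_{T})), \alphat)$.

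For injectivity I would exploit the canonical conditional expectations. Let $E : \cs(G_{T}) \to C_{0}(Y)$ and $\bar E : \cs(G_{T}/\intiso(G_{T})) \to C_{0}(Y)$ be the canonical conditional expectations given on $\cc$ by restriction to the unit space; since $G_{T}$ is \'etale and amenable (Lemma~\ref{lem:amenable}), $E$ is faithful. For $f \in \cc(G_{T})$ and a unit $x$, the fibre of the quotient map over $x$ is $\sset{(x,g,x) : g \in H(T)}$, so that $\bar E(\qs(\alpha_{z}(f)))(x) = \sum_{g \in H(T)} z^{g} f(x,g,x)$; integrating over $z$ and using $\int_{\T^{k}} z^{g}\,dz = 0$ for $g \neq 0$ leaves only $f(x,0,x) = E(f)(x)$. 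Thus $E = \int_{\T^{k}} \bar E \circ \qs \circ \alpha_{z}\,dz$ on $\cc(G_{T})$, and hence on all of $\cs(G_{T})$ by continuity. Now if $\Phi(a) = 0$, then $\qs(\alpha_{z}(a^{*}a)) = \Phi(a)(z)^{*}\Phi(a)(z) = 0$ for every $z$, whence $E(a^{*}a) = 0$ and faithfulness of $E$ forces $a = 0$.

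For surjectivity I would first observe that, being an injective $*$-homomorphism, $\Phi$ is isometric, so its image $\Phi(\cs(G_{T}))$ is a closed $*$-subalgebra of the induced algebra. Lemma~\ref{lem-cts-linear} gives $\Phi(\phi\cdot f) = \hat\phi\,\Phi(f)$ for $\phi \in \cc(H(T))$ and $f \in \cc(G_{T})$, where $\hat\phi \in C(\T^{k}/H(T)^{\perp})$ acts by pointwise multiplication; since the image is closed and multiplication by $\hat\phi$ is bounded, this gives $\hat\phi\,\Phi(\cs(G_{T})) \subseteq \Phi(\cs(G_{T}))$, and as the $\hat\phi$ are dense in $C(\T^{k}/H(T)^{\perp})$ the image is a $C(\T^{k}/H(T)^{\perp})$-submodule. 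Moreover, for each $z$ the fibre $\sset{\Phi(a)(z) : a \in \cs(G_{T})} = \qs(\alpha_{z}(\cs(G_{T}))) = \qs(\cs(G_{T}))$ is all of $\cs(G_{T}/\intiso(G_{T}))$, because $\alpha_{z}$ is an automorphism and $\qs$ is surjective (Proposition~\ref{prop-quotient-map}). A closed $C(\T^{k}/H(T)^{\perp})$-submodule of an induced algebra that surjects onto every fibre must be the whole algebra, by the standard partition-of-unity argument over the compact base $\T^{k}/H(T)^{\perp}$, so $\Phi$ is onto.

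The main obstacle is the surjectivity step: the averaging identity $E = \int_{\T^{k}} \bar E\circ\qs\circ\alpha_{z}\,dz$ makes injectivity short, but to finish one must correctly combine the module structure of Lemma~\ref{lem-cts-linear} (with density of $\set{\hat\phi}$ in $C(\T^{k}/H(T)^{\perp})$) and the full-fibre property, and then invoke the Stone--Weierstrass/partition-of-unity structure theory for induced algebras. Some care is needed in promoting the module relation from $\cc(G_{T})$ to all of $\cs(G_{T})$, which is precisely where closedness of the isometric image is used.
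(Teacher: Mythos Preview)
Your proof is correct. The well-definedness and surjectivity arguments match the paper's essentially verbatim: both use the intertwining $\qs\circ\alpha_{z}=\alphat_{z}\circ\qs$ for the covariance condition, and both combine Lemma~\ref{lem-cts-linear} with density of $\{\hat\phi\}$ in $C(\T^{k}/H(T)^{\perp})$ to see that the image is a closed $C(\T^{k}/H(T)^{\perp})$-submodule with full fibres (the paper verifies full fibres by writing down an explicit preimage for each $f\in\cc(\tilde c^{-1}(q(g)))$, while you simply observe that $\qs$ is surjective; both are fine).

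The injectivity argument, however, takes a genuinely different route. The paper uses the equivariance $\Phi\circ\alpha_{z}=\lt_{\bar z}\circ\Phi$ and the conditional expectations obtained by \emph{averaging over $\T^{k}$} to reduce to showing that $\Phi$ is injective on the fixed-point algebra $\cs(G_{T})^{\alpha}$; it then identifies this with $\cs(c^{-1}(0))$, notes that $c^{-1}(0)$ is principal and amenable, and invokes \cite{exe:pams11}*{Theorem~4.4} to reduce further to injectivity on $C_{0}(Y)$. Your argument bypasses this detour entirely: you work directly with the \emph{groupoid} conditional expectation $E:\cs(G_{T})\to C_{0}(Y)$, establish the integral identity $E=\int_{\T^{k}}\bar E\circ\qs\circ\alpha_{z}\,dz$ by an elementary Fourier computation on $\cc(G_{T})$, and conclude from faithfulness of $E$ (using amenability of $G_{T}$). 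Your approach is shorter and more elementary, needing neither the analysis of $c^{-1}(0)$ nor Exel's uniqueness theorem; the paper's approach, on the other hand, isolates the role of the gauge action more cleanly and would adapt more readily to settings where a good expectation onto $C_{0}(\go)$ is unavailable.
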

  \begin{proof}
    For $a \in \cs(G_T)$, the map $z\mapsto \qs(\alpha_z(a))$
    is continuous by continuity of $\alpha$. Take $f\in \cc(G_{T})$, $w\in \T^{k}$ and
    $z\in H(T)^{\perp}$. Lemma~\ref{lem:actions} gives $\alphat_{z}\circ \qs=\qs\circ
    \alpha_{z}$. Hence
    \begin{equation*}
      \Phi(f)(wz)=\qs(\alpha_{wz}(f))=\qs(\alpha_{z}(\alpha_{w}(f)))=
      \alphat_{z}\qs(\alpha_{w}(f))=\alphat_{z}(\Phi(f)(w)) .
    \end{equation*}
    Thus $\Phi$ takes values in $\Ind_{H(T)^{\perp}}^{\T^{k}}
    (\cs(G_{T}/\intiso(G_{T})), \alphat)$. It is not hard to check
    that $\Phi$ is a homomorphism.

    To see that $\Phi$ is injective we use an averaging argument.  Let
    $\T^{k}$ act on the left of
    $\Ind_{H(T)^{\perp}}^{\T^{k}}(\cs(G_{T}/\intiso(G_{T})),\alphat)$
    by left translation: $\lt_{z}(c)(w)=c(\bar zw)$. We have
    $\Phi\circ \alpha_{z}= \lt_{\bar z}\circ \Phi$.  So the standard
    argument involving the faithful conditional expectations obtained
    from averaging over $\T^k$ actions (see, for example,
    \cite{sww:xx13}*{Lemma~3.13}) shows that it is sufficient to check
    that $\Phi$ restricts to an injection on $C^*(G_T)^\alpha$.

    If $f\in\cc(G_{T})$, then arguing as in \cite{wil:crossed}*{Lemma~1.108},
    we have $\int_{\T^{k}}\alpha_{z}(f)\,dz\in \cc(G_{T})$ and for
    $\gamma \in G_T$,
    \begin{align*}
    \Big(\int_{\T^{k}}\alpha_{z}(f)\,dz\Big)(\gamma) &=\int_{\T^{k}}
    \alpha_{z}(f)(\gamma)\,dz = \Bigl(\int_{\T^{k}}
    z^{c(\gamma)}\,dz\Bigr) f(\gamma) \\
& = \begin{cases}
      f(\gamma)&\text{if $\gamma\in c^{-1}(0)$}\\
      0&\text{otherwise.}
    \end{cases}
    \end{align*}
    It follows that $\cs(G_{T})^{\alpha} = \overline{\cc(c^{-1}(0))}
    \subset \cs(G_{T})$.  Thus the inclusion map induces a
    monomorphism $\rho:\cs(c^{-1}(0))\to\cs(G_{T})$ whose image is
    exactly $\cs(G_{T})^{\alpha}$.  To see that
    $\Phi\restr{\cs(G)^{\alpha}}$ is injective, it suffices to show
    that $\Phi\circ\rho$ is injective.  Since $c^{-1}(0)$ is amenable
    by Lemma~\ref{lem:amenable} and principal by construction,
    \cite{exe:pams11}*{Theorem~4.4} implies that we need only show
    that $(\Phi\circ \rho)\restr{C_{0}(Y)}$ is injective.  As $\rho$
    restricts to the canonical inclusion $C_{0}(Y) \hookrightarrow
    \cs(G_{T})^{\alpha}$, it is enough to verify that $\Phi$ is
    injective on $C_{0}(Y)$.  The homomorphism $\qs\circ \alpha_{z}$
    restricts to the identity map of $C_{0}(Y)\subset \cs(G_{T})$ onto
    $C_{0}(Y)\subset \cs(G_{T}/\intiso(G_{T}))$.  So if $f\in
    C_{0}(Y)$, $z\in\T^{k}$ and $b\in G_{T}/\intiso(G_{T})$, then
    \begin{equation*}
      \Phi(f)(z)(b)=\qs(\alpha_{z}(f))(b)=
      \begin{cases}
        f(x)&\text{if $b=(x,0,x)\in (G_{T}/\intiso(G_{T}))^{(0)}$} \\
        0&\text{otherwise.}
      \end{cases}
    \end{equation*}
    Thus if $\Phi(f)=0$, then $f=0$.  This completes the proof that
    $\Phi$ is injective.

    We still have to show that $\Phi$ is
    surjective. Lemma~\ref{lem-cts-linear} implies that if $\phi\in
    \cc(H(T))$ and $f\in \cc(G_{T})$, then $\hat\phi\cdot
    \Phi(f)=\Phi(\phi\cdot f)$ for the obvious action of
    $C(\T^{k}/H(T))$ on the induced algebra.  Since $\sset{\hat\phi :
      \phi \in \cc(H(T))}$ is dense in $C(\T^{k}/H(T))$ it follows
    that the range of $\Phi$ is a $C(\T^{k}/H(T))$-submodule.  So it
    suffices to show that the range of $\qs\circ\alpha_{z}$ contains
    $\cc(G_{T}/\intiso(G_{T}))$.

    For this, fix $g \in \Z^k$ and $f \in \tilde{c}^{-1}(q(g))$; it
    suffices to show that $f$ is in the range of $\pi \circ
    \alpha_z$. Define $h \in C_c(G_T)$ by
    \begin{equation*}
      h(\gamma) = \begin{cases}
        \overline{z}^g f(\tilde{q}(\gamma)) &\text{ if $c(\gamma) = g$}\\
        0 &\text{ otherwise.}
      \end{cases}
    \end{equation*}
    Then $h$ is continuous because each $c^{-1}(g)$ is clopen in
    $G_T$; and $\qs(\alpha_z(h)) = f$.
  \end{proof}

    We now aim to apply \cite{rw:morita}*{Proposition~6.6}, which
    describes the primitive-ideal space of an induced algebra, to
    describe the topology of $\Prim(\cs(G_T))$ for a special class of
    $\N^k$-actions $T$. To achieve this we first describe, in
    Lemma~\ref{lem:Jacobson}, the Jacobson topology on
    $\Prim(\cs(G))$ when $G$ is an amenable \'etale Hausdorff
    groupoid whose reduction to any closed invariant set is
    topologically principal. This topology is also described by
    \cite{ren:jot91}*{Corollary~4.9}, but the statement given there is
    not quite the one we need.

\begin{lemma}\label{lem:augmentations irreducible}
  Let $G$ be a second-countable locally compact Hausdorff \'etale
  groupoid, and fix $x \in \go$. There is an irreducible
  representation $\omega_{[x]} : C^*(G) \to \Bb(\ell^2([x]))$
  satisfying $\omega_{[x]}(f) \delta_y = \sum_{s(\gamma) = y}
  f(\gamma) \delta_{r(\gamma)}$ for all $f \in C_c(G)$. If $G$ is
  topologically principal and amenable and if $[x]$ is dense in $\go$,
  then $\omega_{[x]}$ is faithful, and hence $C^*(G)$ is primitive.
\end{lemma}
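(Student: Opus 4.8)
The plan is to construct $\omega_{[x]}$ by hand on $\cc(G)$, check that it is bounded in the $I$-norm so that it extends to $\cs(G)$, establish irreducibility through a commutant computation, and finally deduce faithfulness from a groupoid uniqueness theorem once the extra hypotheses are imposed.

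First I would verify directly that the stated formula defines a $*$-representation of $\cc(G)$ on $\ell^{2}([x])$. The sum defining $\omega_{[x]}(f)\delta_{y}$ is finite because $G_{y}$ is discrete and $f$ has compact support, and since $s(\gamma)=y\in[x]$ forces $r(\gamma)\in[x]$, the image lies in $\ell^{2}([x])$. A short reindexing of the convolution $f*g$ over composable pairs $\alpha\beta=\gamma$ shows $\omega_{[x]}(f*g)=\omega_{[x]}(f)\omega_{[x]}(g)$, and a similar computation gives $\omega_{[x]}(f^{*})=\omega_{[x]}(f)^{*}$. To pass to $\cs(G)$ I would bound $\|\omega_{[x]}(f)\|$ by $\|f\|_{I}$: writing $\|\omega_{[x]}(f)\xi\|^{2}=\sum_{u\in[x]}|\sum_{\gamma\in G^{u}}f(\gamma)\xi(s(\gamma))|^{2}$, applying Cauchy--Schwarz to each inner sum, and then reorganising the resulting double sum by source yields $\|\omega_{[x]}(f)\xi\|^{2}\le\|f\|_{I}^{2}\|\xi\|^{2}$. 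Hence $\omega_{[x]}$ is $I$-norm bounded and extends to a representation of $\cs(G)$ on $\Bb(\ell^{2}([x]))$.

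For irreducibility I would compute the commutant of $\omega_{[x]}(\cs(G))$. On $\cc(\go)\subset\cs(G)$ the operator $\omega_{[x]}(\phi)$ is diagonal with $\omega_{[x]}(\phi)\delta_{y}=\phi(y)\delta_{y}$; since $\cc(\go)$ separates the points of $[x]$, inspecting matrix entries shows that any $T$ commuting with all such operators is diagonal in the basis $\{\delta_{y}\}$. To pin down the diagonal I would use that $G$ is étale and so has a basis of open bisections: given $y,u\in[x]$, transitivity of the orbit gives $\gamma\in G$ with $s(\gamma)=y$ and $r(\gamma)=u$, and a function $f$ supported on a bisection through $\gamma$ with $f(\gamma)\neq0$ satisfies $\omega_{[x]}(f)\delta_{y}=f(\gamma)\delta_{u}$. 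Commuting $T=\mathrm{diag}(t_{w})$ with $\omega_{[x]}(f)$ then forces $t_{y}=t_{u}$; as $y,u$ range over the single orbit $[x]$, the operator $T$ is scalar, so $\omega_{[x]}$ is irreducible. This part uses neither amenability nor topological principality.

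Finally, for faithfulness I would show $\ker\omega_{[x]}=0$. Because $[x]$ is dense in $\go$, a function in $C_{0}(\go)$ vanishing on $[x]$ vanishes identically, so $\omega_{[x]}$ is injective on $C_{0}(\go)$. Amenability of $G$ identifies $\cs(G)$ with $\cs_{r}(G)$, whence $\ker\omega_{[x]}$ is an ideal of the reduced algebra; the Cuntz--Krieger uniqueness theorem for topologically principal groupoids---every nonzero ideal of $\cs_{r}(G)$ meets $C_{0}(\go)$ nontrivially (cf.\ \cite{ren:jot91}, \cite{bcfs:sf14})---then forces $\ker\omega_{[x]}=0$. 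Thus $\omega_{[x]}$ is a faithful irreducible representation and $\cs(G)$ is primitive. I expect the faithfulness step to be the main obstacle: the construction and irreducibility are routine, but faithfulness genuinely requires both amenability (to descend to the reduced algebra, where $C_{0}(\go)$ detects ideals) and topological principality, and the cleanest route is to invoke the uniqueness theorem rather than to compare $\omega_{[x]}$ with the regular representations $L^{y}$ directly.
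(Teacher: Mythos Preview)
Your proof is correct, and the faithfulness argument is essentially identical to the paper's: both observe that density of $[x]$ forces $\omega_{[x]}$ to be injective on $C_0(\go)$, then invoke a uniqueness theorem (the paper cites \cite{exe:pams11}*{Theorem~4.4}; you cite the equivalent formulation from \cite{ren:jot91}/\cite{bcfs:sf14}) together with amenability to promote this to faithfulness on all of $\cs(G)$.

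Where you genuinely diverge is in the existence and irreducibility of $\omega_{[x]}$. The paper does not build the representation by hand: it recognises $\omega_{[x]}$ as the representation $\Ind^G_{\{x\}}E_x$ induced from the trivial one-dimensional representation $E_x$ of the isotropy group $G^x_x$, and then cites \cite{ionwil:pams08}*{Theorem~5} (which says such induced representations are irreducible) rather than computing the commutant. Your approach is more self-contained---the $I$-norm bound via Cauchy--Schwarz and the two-step commutant computation (diagonal from $C_0(\go)$, scalar from bisection-supported functions) are entirely elementary and avoid any appeal to the machinery of induced representations for groupoids. The paper's route is shorter but hides the work inside the reference; yours makes the mechanism transparent and would be preferable if one wanted to avoid importing the induction formalism. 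Either way the result is the same, and your commutant argument is a standard and reliable substitute for the cited theorem.
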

\begin{proof}
  Let $E_x$ denote the 1-dimensional representation of the group
  $G^x_x$. Then $\omega_{[x]} := \Ind^G_{\{x\}} E_x$ is a
  representation satisfying the desired formula.\footnote{This is also
  the representation described in~\cite{bcfs:sf14}*{Proposition~5.2}.}
  Hence $\omega_{[x]}$ is irreducible by \cite{ionwil:pams08}*{Theorem~5}.

  Now suppose that $G$ is amenable and topologically principal with
  $[x]$ dense in $\go$. Then clearly $\omega_{[x]}$ is faithful on
  $C_0(\go)$. So \cite{exe:pams11}*{Theorem~4.4} says that it is
  faithful on $C^*(G)$, whence $C^*(G)$ is primitive.
\end{proof}

Recall that the quasi-orbit space $\qo(G)=\set{\overline{[x]}:x\in\go}$ carries the
quotient topology for the map $q:\go\to \qo(G)$ that identifies $u$ with $v$ exactly when
$[u]$ and $[v]$ have the same closure in $\go$.  In particular, if $S\subset \qo(G)$,
then $\overline{S}=\set{q(x):x\in \overline{q^{-1}(S)}}$.

\begin{lemma}\label{lem:Jacobson}
  Let $G$ be an amenable, \'etale Hausdorff groupoid and suppose that
  $G|_X$ is topologically principal for every closed invariant subset
  $X$ of the unit space\footnote{Although the term has been used
  inconsistently, in \cite{ren:jot91} for example, one says the
  $G$-action on $\go$ is essentially free.} For $x \in \go$, let
  $\omega_x$ be the irreducible representation of
  Lemma~\ref{lem:augmentations irreducible}. The map $x \mapsto
  \ker\omega_x$ from $\go$ to $\Prim(\cs(G))$ descends to a
  homeomorphism of the quasi-orbit space $\qo(G)$ onto
  $\Prim(\cs(G))$.
\end{lemma}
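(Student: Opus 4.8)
The plan is to show that $\ker\omega_x$ is nothing but the kernel of the restriction homomorphism onto $\overline{[x]}$, and then to read off all the required properties from this identification. Write $R_F\colon\cs(G)\to\cs(G\restr F)$ for the restriction homomorphism associated to a closed invariant set $F$, and for an open invariant $U\subseteq\go$ write $I_U:=\cs(G\restr U)\vartriangleleft\cs(G)$; recall from the exact sequence in the proof of Proposition~\ref{prop-factors} that $\ker R_F=I_{\go\setminus F}$, and that $I_U\cap C_0(\go)=C_0(U)$, so that $U\mapsto I_U$ is inclusion-preserving and injective. First I would compute $\omega_x$ on $C_0(\go)$: the defining formula gives $\omega_x(\phi)\delta_y=\phi(y)\delta_y$ for $\phi\in C_0(\go)$, so $\omega_x\restr{C_0(\go)}$ has support $\overline{[x]}$. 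Proposition~\ref{prop-factors} then factors $\omega_x=\tilde\omega_x\circ R_{\overline{[x]}}$ through $\cs(G\restr{\overline{[x]}})$, and since $G\restr{\overline{[x]}}$ is amenable (amenability passes to the reduction to a closed invariant set) and topologically principal (by hypothesis) with $[x]$ dense, Lemma~\ref{lem:augmentations irreducible} shows $\tilde\omega_x$ is faithful. Hence $\ker\omega_x=\ker R_{\overline{[x]}}=I_{\go\setminus\overline{[x]}}$.

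This identification immediately yields that $x\mapsto\ker\omega_x$ depends only on $\overline{[x]}$, so it descends to a map $\Theta\colon\qo(G)\to\Prim(\cs(G))$. Injectivity of $\Theta$ follows by intersecting with $C_0(\go)$: the set $\ker\omega_x\cap C_0(\go)=C_0(\go\setminus\overline{[x]})$ recovers $\overline{[x]}$. For surjectivity I would start from an arbitrary $P=\ker L$ with $L$ irreducible; by Proposition~\ref{prop-invar-ideals} the support $F$ of $L\restr{C_0(\go)}$ is $G$-irreducible, hence $F=\overline{[x]}$ for some $x$ by Lemma~\ref{lem-g-irr}. Factoring $L=\tilde L\circ R_F$ as in Proposition~\ref{prop-factors}, the fact that $L\restr{C_0(\go)}$ has support exactly $F$ forces $\tilde L$ to be faithful on $C_0(F)$, so \cite{exe:pams11}*{Theorem~4.4} applied to the amenable, topologically principal groupoid $G\restr F$ makes $\tilde L$ faithful; thus $P=\ker R_F=\ker\omega_x=\Theta(q(x))$.

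It remains to see that $\Theta$ is a homeomorphism, and since $\Theta$ is a bijection it suffices to prove $\Theta(\overline S)=\overline{\Theta(S)}$ for every $S\subseteq\qo(G)$. Writing $F:=\overline{q^{-1}(S)}$, the description of closures in $\qo(G)$ recalled before the lemma gives $\overline S=q(F)$, hence $\Theta(\overline S)=\set{\ker\omega_x:x\in F}$, while the hull--kernel formula gives $\overline{\Theta(S)}=\set{P\in\Prim(\cs(G)):P\supseteq\bigcap_{q(y)\in S}\ker\omega_y}$. The heart of the matter is the claim that $\bigcap_{q(y)\in S}\ker\omega_y=I_{\go\setminus F}$. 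Granting this, both inclusions are routine: if $x\in F$ then $\overline{[x]}\subseteq F$, so $\ker\omega_x=I_{\go\setminus\overline{[x]}}\supseteq I_{\go\setminus F}$, giving $\Theta(\overline S)\subseteq\overline{\Theta(S)}$; conversely any $P\in\overline{\Theta(S)}$ equals some $\ker\omega_x$ with $I_{\go\setminus\overline{[x]}}\supseteq I_{\go\setminus F}$, whence $\overline{[x]}\subseteq F$ by injectivity of $U\mapsto I_U$, so $x\in F$ and $P\in\Theta(\overline S)$.

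To prove the claim, set $J:=\bigcap_{q(y)\in S}\ker\omega_y=\bigcap_{q(y)\in S}I_{\go\setminus\overline{[y]}}$. Since each $\overline{[y]}\subseteq F$ we have $I_{\go\setminus F}\subseteq J$, and intersecting with $C_0(\go)$ gives $J\cap C_0(\go)=\set{f:f\restr{\overline{[y]}}=0\ \forall\,q(y)\in S}=\set{f:f\restr F=0}=C_0(\go\setminus F)$, because a continuous function vanishing on every $\overline{[y]}$ vanishes on the closure $F=\overline{q^{-1}(S)}$. Therefore $J/I_{\go\setminus F}$ is an ideal of $\cs(G)/I_{\go\setminus F}=\cs(G\restr F)$ whose intersection with $C_0(F)$ is trivial: any element of $C_0(F)\cap(J/I_{\go\setminus F})$ lifts to some $f\in C_0(\go)\cap J=C_0(\go\setminus F)$, which maps to $0$ in $C_0(F)$. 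Since $G\restr F$ is amenable and topologically principal, \cite{exe:pams11}*{Theorem~4.4} guarantees that every nonzero ideal of $\cs(G\restr F)$ meets $C_0(F)$ nontrivially, so $J/I_{\go\setminus F}=0$ and $J=I_{\go\setminus F}$, as required. The main obstacle is exactly this last claim: it is where amenability (needed to invoke Exel's uniqueness theorem) and the essential--freeness hypothesis on \emph{all} closed invariant sets (so that $G\restr F$ is topologically principal for the possibly non-irreducible set $F$) are indispensable.
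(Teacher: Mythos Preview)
Your proof is correct and takes a somewhat different route from the paper's. The paper repeatedly invokes \cite{ren:jot91}*{Corollary~4.9} (Renault's result that, under the essential-freeness hypothesis, ideals of $\cs(G)$ are determined by their intersections with $C_0(\go)$) to establish injectivity, surjectivity, and the topological statement. You instead make the identification $\ker\omega_x = I_{\go\setminus\overline{[x]}}$ explicit at the outset via Lemma~\ref{lem:augmentations irreducible}, and then use Exel's uniqueness theorem \cite{exe:pams11}*{Theorem~4.4} directly---both for surjectivity and, crucially, for your key claim that $\bigcap_{q(y)\in S}\ker\omega_y = I_{\go\setminus F}$.

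Your approach is arguably more self-contained within the paper's framework: Exel's theorem is already invoked elsewhere (e.g.\ in the proofs of Lemma~\ref{lem:augmentations irreducible} and Proposition~\ref{prp:induced alg}), whereas \cite{ren:jot91}*{Corollary~4.9} requires unpacking Renault's groupoid dynamical system machinery. Your argument also makes transparent exactly where the hypothesis on \emph{all} closed invariant sets (not just irreducible ones) is used---namely to ensure $G\restr F$ is topologically principal for the possibly non-irreducible $F=\overline{q^{-1}(S)}$. The paper's approach, on the other hand, delegates the hard work to Renault's general structure theorem, which is cleaner to cite but obscures this point. Both are valid; yours trades one external reference for another that the paper already relies on.
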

\begin{proof}
  For $x \in \go$, we have $\ker\omega_x \cap C_0(\go) = C_0(\go
  \setminus \overline{[x]})$. Since $G|_X$ is topologically principal
  for every closed invariant subset $X \subset \go$,
  \cite{ren:jot91}*{Corollary~4.9}\footnote{Specifically,
    \cite{ren:jot91}*{Corollary~4.9} applied to the groupoid dynamical
    system $(G, \Sigma, \Aa)$ where $\Sigma$ is the bundle of trivial
    groups over $\go$ and $\Aa$ is the trivial bundle $\go \times \C$ of
    1-dimensional $C^*$-algebras---see also
    \cite{bcfs:sf14}*{Corollary~5.9}.}
  therefore implies that $\ker\omega_x = \ker\omega_y$ if and only if
  $\overline{[x]} = \overline{[y]}$.  Hence $x \mapsto \ker\omega_x$
  descends to a well-defined injection $\overline{[x]} \mapsto
  \ker\omega_x$. To see that it is surjective, observe that if $\pi$
  is an irreducible representation of $\cs(G)$, then
  Proposition~\ref{prop-factors} implies that $\ker\pi \cap C_0(\go) =
  C_0(\go\setminus\overline{[x]})$ for some $x \in \go$. That is,
  $\ker\pi \cap C_0(\go) = \ker\omega_x \cap C_0(\go)$, and then
  \cite{ren:jot91}*{Corollary~4.9} again shows that $\ker\pi =
  \ker\omega_x$.

  To show that $\overline{[x]} \mapsto \ker\omega_x$ is a
  homeomorphism, it suffices to take a set $S \subset \qo(G)$ and an
  element $x \in \go$ and show that $\overline{[x]} \in \overline{S}$
  if and only if $\ker\omega_x \in \overline{\sset{\ker\omega_y : q(y)
      \in S}}$; for then $S \subset \qo(G)$ is closed if and
  only if its image $\sset{\ker\omega_y : q(y) \in S}$ is
  closed in $\Prim(C^*(G))$.

  Fix $S \subset \qo(G)$ and $x \in \go$. We have
  \[
  \overline{\set{\ker\omega_y : q(y) \in  S}} =
  \sset{\ker\omega_z : \bigcap_{q(y) \in S} \ker\omega_y \subset
    \ker\omega_z}.
  \]
  Using \cite{ren:jot91}*{Corollary~4.9} again, we deduce that
  $\ker\omega_x \in \overline{\set{\ker\omega_y : q(y) \in S}}$
  if and only if $\big(\bigcap_{q(y) \in S} \ker\omega_y\big)
  \cap C_0(\go) \subset \ker\omega_x \cap C_0(\go)$. We have
  \begin{align*}
  \Big(\bigcap_{q(y) \in S} \ker\omega_y\Big) \cap C_0(\go) &=
  \set{f \in C_0(\go) : f|_{q^{-1}(S)} = 0} \\ &= \set{f \in C_0(\go) :
    f|_{\overline{q^{-1}(S)}} = 0} .
  \end{align*}
On the other hand,
\begin{equation*}
  \ker\omega_x \cap C_0(\go) = \set{f \in C_0(\go) :
    f|_{\overline{[x]}} = 0} .
\end{equation*}
Hence $\ker\omega_x \in
  \overline{\sset{\ker\omega_y : y \in \bigcup S}}$ if and only if
  $\overline{[x]} \subset \overline{q^{-1}(S)}$, and this is
  equivalent to $q(x) \in \overline{S}=\set{q(x):x\in
    \overline{q^{-1}(S)}}$ since $\overline{q^{-1}(S)}$ is
  closed and invariant.
\end{proof}

For the next result, recall that the quotient map $q : G \to
G/\intiso(G)$ restricts to a homeomorphism of unit spaces. Since $q$
also preserves the range and source maps, it carries $G$-orbits
bijectively to the corresponding $(G/\intiso(G))$-orbits, and
therefore carries orbit closures in $G$ to the corresponding orbit
closures in $G/\intiso(G)$. Hence the identification $\go =
G/\intiso(G)$ induces a homeomorphism $\qo(G) \cong
\qo(G/\intiso(G))$.

\begin{thm}\label{thm:specialcaseJacobson}
  Let $T$ be an irreducible action of $\N^{k}$ on a locally compact
  space $Y$ by local homeomorphisms such that, in the notation
  of~\eqref{eq:5}, $\Sigma_Y = \Sigma$. Suppose that for every $y \in
  Y$, the set
  \[
  \Sigma_{\overline{[y]}} :=
  \sset{(m,n)\in\N^{k}\times\N^{k}:\text{$T^{m}x=T^{n}x$ for all $x\in
      \overline{[y]}$}}
  \]
  satisfies $\Sigma_{\overline{[y]}} = \Sigma$. Let $\alpha :
  \T^{k}\to \Aut\cs(G_{T})$ be as in Lemma~\ref{lem:actions}, and let
  $\kappa : \cs(G_T) \to \cs(G_T/\intiso(G_T))$ be as in
  Proposition~\ref{prop-quotient-map}. For $y \in
  (G_T/\intiso(G_T))^{(0)}$, let $\omega_x$ be the irreducible
  representation of $C^*(G_T)$ described in
  Lemma~\ref{lem:augmentations irreducible}. The map $(y, z) \mapsto \ker(\omega_y \circ
  \alpha_z)$ from $Y \times \T^k$ to $\Prim(\cs(G_T))$ descends to a
  homeomorphism $\qo(G_T) \times {H(T)}^{\wedge} \cong
  \Prim(\cs(G_T))$.
\end{thm}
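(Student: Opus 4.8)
The plan is to read off $\Prim(\cs(G_T))$ from the realisation of $\cs(G_T)$ as an induced algebra. By Proposition~\ref{prp:induced alg} the isomorphism $\Phi$ identifies $\cs(G_T)$ with $\Ind_{H(T)^{\perp}}^{\T^{k}}(\cs(G_T/\intiso(G_T)),\alphat)$, and under this identification evaluation at $z\in\T^{k}$ corresponds to $\kappa\circ\alpha_{z}$. Since the primitive ideal space of an induced algebra is computed by \cite{rw:morita}*{Proposition~6.6} as the induced space $\T^{k}\times_{H(T)^{\perp}}\Prim(\cs(G_T/\intiso(G_T)))$, I would first pin down the fibre $\Prim(\cs(G_T/\intiso(G_T)))$ together with the $H(T)^{\perp}$-action that $\alphat$ induces on it, and then show that this action is trivial, so that the induced space collapses to a product.

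For the fibre I would apply Lemma~\ref{lem:Jacobson} to $G:=G_T/\intiso(G_T)$. This groupoid is Hausdorff and \'etale by Proposition~\ref{prop-quotient-groupoid}\partref{4}, and amenable by Lemma~\ref{lem:amenable} together with Proposition~\ref{prop-quotient-groupoid}\partref{6}; and by the identification preceding the theorem its quasi-orbit space is $\qo(G_T)$ and its unit space is $Y$. The remaining hypothesis of Lemma~\ref{lem:Jacobson}---that the reduction of $G$ to every closed invariant subset of $Y$ is topologically principal---is where the standing assumption $\Sigma_{\overline{[y]}}=\Sigma$ must be used, and I expect this to be the main obstacle. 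By Lemma~\ref{lem-g-irr} it suffices to treat the reductions to orbit closures $\overline{[y]}$. A unit $x\in\overline{[y]}$ has trivial isotropy in $G|_{\overline{[y]}}$ exactly when every relation $T^{m}x=T^{n}x$ forces $m-n\in H(T)$, i.e. when $x$ avoids the closed sets $E_{m,n}:=\set{x:T^{m}x=T^{n}x}$ for all $(m,n)\notin\Sigma$. Thus topological principality of $G|_{\overline{[y]}}$ amounts, via the Baire property of the locally compact Hausdorff space $\overline{[y]}$, to showing that each $E_{m,n}\cap\overline{[y]}$ with $(m,n)\notin\Sigma$ is nowhere dense in $\overline{[y]}$. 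The hypothesis $\Sigma_{\overline{[y]}}=\Sigma$ guarantees that $E_{m,n}\cap\overline{[y]}$ is at least a proper closed subset; the delicate point, which I would prove by passing to the orbit closures of points in a putative relatively open piece of $E_{m,n}$ and invoking $\Sigma_{\overline{[w]}}=\Sigma$ for those $w$, is that it cannot contain any nonempty relatively open subset of $\overline{[y]}$. With this in hand Lemma~\ref{lem:Jacobson} gives a homeomorphism $\qo(G_T)\cong\Prim(\cs(G_T/\intiso(G_T)))$ sending $\overline{[y]}$ to $\ker\omega_{y}$, where $\omega_{y}$ is the representation of Lemma~\ref{lem:augmentations irreducible} for $G_T/\intiso(G_T)$, so that $\omega_{y}\circ\kappa$ is the representation named in the statement.

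Next I would show that $\alphat$ induces the trivial action on this fibre. For $z\in H(T)^{\perp}$ the automorphism $\alphat_{z}$ multiplies $f(x,q(g),y)$ by $z^{g}$, and on the unit space $g=0$, so $\alphat_{z}$ restricts to the identity on $C_{0}(Y)$. Since $\alphat_{z}$ is an automorphism fixing $C_{0}(Y)$ pointwise, $\alphat_{z}(\ker\omega_{y})\cap C_{0}(Y)=\ker\omega_{y}\cap C_{0}(Y)$; because Lemma~\ref{lem:Jacobson} shows that a primitive ideal of $\cs(G_T/\intiso(G_T))$ is determined by its intersection with $C_{0}(Y)$, this forces $\alphat_{z}(\ker\omega_{y})=\ker\omega_{y}$. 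Hence $H(T)^{\perp}$ fixes every point of $\Prim(\cs(G_T/\intiso(G_T)))$.

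Finally I would assemble the pieces. With the $H(T)^{\perp}$-action trivial, the induced space of \cite{rw:morita}*{Proposition~6.6} is the genuine product $(\T^{k}/H(T)^{\perp})\times\Prim(\cs(G_T/\intiso(G_T)))$, and under $\Phi$ the point $(zH(T)^{\perp},\ker\omega_{y})$ corresponds to $\ker(\omega_{y}\circ\kappa\circ\alpha_{z})$, which is the ideal $\ker(\omega_{y}\circ\alpha_{z})$ of the statement. Combining this with the homeomorphism of the previous paragraph and the standard identification $\T^{k}/H(T)^{\perp}\cong {H(T)}^{\wedge}$ yields the asserted homeomorphism $\qo(G_T)\times {H(T)}^{\wedge}\cong\Prim(\cs(G_T))$, and shows that $(y,z)\mapsto\ker(\omega_{y}\circ\alpha_{z})$ on $Y\times\T^{k}$ descends to it, since $y\mapsto\overline{[y]}$ is the quotient $Y\to\qo(G_T)$ and $z\mapsto zH(T)^{\perp}$ is the quotient $\T^{k}\to {H(T)}^{\wedge}$. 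The only genuinely hard step is the topological-principality verification flagged above; the remaining bookkeeping is routine given Propositions~\ref{prop-quotient-groupoid}, \ref{prop-quotient-map} and~\ref{prp:induced alg}.
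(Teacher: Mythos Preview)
Your overall plan coincides with the paper's: identify $\cs(G_T)$ with the induced algebra via Proposition~\ref{prp:induced alg}, compute the primitive-ideal space of the fibre $\cs(G_T/\intiso(G_T))$ as $\qo(G_T)$ via Lemma~\ref{lem:Jacobson}, show that the $H(T)^\perp$-action on that space is trivial because $\alphat_z$ fixes $C_0(Y)$ pointwise, and then read off $\Prim(\cs(G_T))$ as the product $\qo(G_T)\times H(T)^{\wedge}$ from \cite{rw:morita}*{Proposition~6.16}. The paper does exactly this; your argument for triviality of the $H(T)^\perp$-action is verbatim the paper's (the paper cites \cite{ren:jot91}*{Corollary~4.9} where you cite Lemma~\ref{lem:Jacobson}, but these are the same statement here).

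The one substantive divergence is your handling of the topological-principality hypothesis of Lemma~\ref{lem:Jacobson}. The paper does not run a Baire argument. Instead it observes that the hypothesis $\Sigma_{\overline{[y]}}=\Sigma$, together with Proposition~\ref{prop-iso-on-y-closed} applied to the restricted action on $\overline{[y]}$, gives $\intiso(G_T)|_{\overline{[y]}}=\intiso(G_T|_{\overline{[y]}})$; hence $(G_T/\intiso(G_T))|_{\overline{[y]}}\cong (G_T|_{\overline{[y]}})/\intiso(G_T|_{\overline{[y]}})$, and this quotient is topologically principal by Proposition~\ref{prop-quotient-groupoid}\partref{5}. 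This structural route is shorter and sidesteps the nowhere-density computation entirely.

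Your Baire sketch, as written, has a gap. You propose to show that $E_{m,n}\cap\overline{[y]}$ is nowhere dense by choosing $w$ in a putative relatively open piece and invoking $\Sigma_{\overline{[w]}}=\Sigma$. But that hypothesis says $E_{m,n}\not\supset\overline{[w]}$, whereas what you have is only that $E_{m,n}$ contains a nonempty relatively open subset of $\overline{[w]}$---exactly the situation you started from one orbit-closure down. The descent does not terminate, and $E_{m,n}$ is not $G_T$-invariant (only forward-invariant under $T$), so you cannot saturate your open piece to get a contradiction directly. The paper's identification $\intiso(G_T|_{\overline{[y]}})=\overline{[y]}\times H(T)$ via Proposition~\ref{prop-iso-on-y-closed} is the cleaner way to resolve this; I would recommend adopting it in place of the Baire argument.
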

\begin{proof}
  Let $\Phi : \cs(G_T) \to \Ind_{H(T)^{\perp}}^{\T^{k}}
  (\cs(G_{T}/\intiso(G_{T})), \alphat)$ be the isomorphism of
  Proposition~\ref{prp:induced alg}. For each $y \in Y$, let
  $\tilde\omega_y$ be the irreducible representation of
  $\cs(G_T/\intiso(G_T))$ obtained from
  Lemma~\ref{lem:augmentations irreducible}. Observe that $\tilde\omega_y \circ \qs =
  \omega_y$. We have
  \[
  \Phi(\ker(\omega_y \circ \alpha_z)) = \sset{s \in
    \Ind_{H(T)^{\perp}}^{\T^{k}} (\cs(G_{T}/\intiso(G_{T})), \alphat)
    : f(z) \in \ker\tilde\omega_y}.
  \]
  Write $\varepsilon_z$ for the homomorphism of the induced
  algebra onto $\cs(G_{T}/\intiso(G_{T}))$ given by evaluation at $z$.
  It now suffices to show that
  \begin{equation}\label{eq:yz to irrep}
    (y,z) \mapsto \ker\bigl( \tilde\omega_y \circ \varepsilon_z \bigr)
  \end{equation}
  induces a homeomorphism of $\qo(G_T) \times {H(T)}^{\wedge}$ onto the
  primitive ideal space of the induced algebra.

  Proposition~\ref{prop-iso-on-y-closed} combined with the hypothesis
  that each $\Sigma_{\overline{[y]}} = \Sigma$ ensures that
  $\intiso(G)|_{\overline{[y]}} = \intiso(G|_{\overline{[y]}})$ for
  each $y$. Hence Proposition~\ref{prop-quotient-groupoid} ensures
  that the reduction of $G_{T}/\intiso(G_{T})$ to any orbit closure,
  and hence to any closed invariant set, is topologically
  principal. Now Lemma~\ref{lem:Jacobson} implies that
  $\ker(\tilde\omega_y \circ \varepsilon_z) = \ker(\tilde\omega_x
  \circ \varepsilon_z)$ if and only if $\overline{[y]} =
  \overline{[x]}$. So the map~\eqref{eq:yz to irrep} descends to a map
  $(\overline{[y]}, z) \mapsto \ker\bigl(\tilde\omega_y \circ
  \varepsilon_z\bigr)$. Composing this with the homeomorphism of
  Lemma~\ref{lem:Jacobson} shows that~\eqref{eq:yz to irrep} induces a
  well-defined map
  \[
  M : (\ker\tilde\pi_\omega, z) \mapsto \ker\bigl(\tilde\omega_y \circ
  \varepsilon_z\bigr).
  \]
  An application of Proposition~6.16 of \cite{rw:morita}---or, rather, of
  the obvious primitive-ideal version of that result---shows that $M$
  induces a homeomorphism of the quotient of
  $(\Prim(\cs(G_{T}/\intiso(G_{T}))) \times \T^k)$ by the
  diagonal action of $H(T)^\perp$ onto the primitive ideal space of the induced
  algebra. Since the action of $H(T)^\perp$ on $\T^k$ is by
  translation and has quotient ${H(T)}^{\wedge}$, it now suffices to
  show that the action of $H(T)^\perp$ on
  $\Prim\bigl(\cs(G_{T}/\intiso(G_{T}))\bigr)$ is trivial. Since
  $\alphat_z$ fixes $C_0(\go) \subset
  \cs(G_{T}/\intiso(G_{T}))$ pointwise, for any ideal $I$ of
  $\cs(G_{T}/\intiso(G_{T}))$, we have $\alphat_z(I) \cap C_0(\go) = I
  \cap C_0(\go)$, and then \cite{ren:jot91}*{Corollary~4.9} implies
  that $\alphat_z(I) = I$. So $H(T)^\perp$ acts trivially on
  $\Prim\bigl(\cs(G_{T}/\intiso(G_{T}))\bigr)$.
\end{proof}

\section{The Primitive Ideals of the \texorpdfstring{$C^*$}{C*}-algebra of a Deaconu--Renault groupoid}
\label{sec:prim-ideals-csg_t}

In this section, our aim is to catalogue the primitive ideals of $\cs(G_{T})$.  We need
to refine our notation from Section~\ref{sec:prim-ideal-space} to accommodate actions
which are not necessarily irreducible.

\begin{notation}
  Let $T$ be an action of $\N^{k}$ on a locally compact space $X$ by
  local homeomorphisms.  Recall that for $x \in X$,
  \[
  [x]=\sset{y\in X:\text{$T^{m}x=T^{n}y$ for some $m,n\in\N^{k}$}}.
  \]
  For $x \in X$ and $U\subset \overline{[x]}$ relatively open, let
  \[
  \Sigma(x)_{U} := \sset{(m,n)\in\N^{k}\times\N^{k}:
    \text{$T^{m}y=T^{n}y$ for all $y\in U$}},
  \]
  and define
  \[\textstyle
  \Sigma(x) := \bigcup_{U}\Sigma(x)_{U}.
  \]
  Lemma~\ref{lem-pos-cone} implies that
  \begin{equation*}
    Y(x):=\bigcup\sset{Y\subset \overline{[x]}:\text{$Y$ is relatively
        open and $\Sigma(x)_{Y}=\Sigma(x)$}}
  \end{equation*}
  is nonempty and is the maximal relatively open subset of
  $\overline{[x]}$ such that $\Sigma(x)_{Y(x)}=\Sigma(x)$.
  Proposition~\ref{prop-iso-on-y-closed} implies that
  \begin{equation*}
    H(x) := H(T|_{\overline{[x]}}) = \sset{m-n:(m,n)\in\Sigma(x)}
  \end{equation*}
  is a subgroup of $\Z^{k}$.  To lighten notation, set $\II(x) :=
  \intiso(G_{T}\restr{Y(x)})$.  Proposition~\ref{prop-iso-on-y-closed}
  says that
  \begin{equation*}
    \II(x)=\sset{(y,g,y):\text{$y\in Y(x)$ and $g\in H(x)$}},
  \end{equation*}
  and is a closed subset of $G_{T}\restr{Y(x)}$.
\end{notation}

\begin{lemma}
  \label{lem-y-separate}
  Let $T$ be an action of $\N^{k}$ on a locally compact Hausdorff
  space $X$ by local homeomorphisms.  For $x,y\in X$, we have
  $Y(x)=Y(y)$ if and only if $\overline{[x]}=\overline{[y]}$.
\end{lemma}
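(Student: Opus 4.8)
The plan is to treat the two implications separately, noting that the forward implication is essentially formal while the reverse carries all of the content.

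For the ``if'' direction, suppose $\overline{[x]} = \overline{[y]}$. I would simply observe that the definitions of $\Sigma(x)_U$, of $\Sigma(x)$, and hence of $Y(x)$ refer only to the set $\overline{[x]}$: the relatively open subsets $U$ of $\overline{[x]}$ coincide with those of $\overline{[y]}$, and the defining condition $T^m z = T^n z$ for all $z \in U$ is literally unchanged. Thus $\Sigma(x) = \Sigma(y)$, and therefore $Y(x) = Y(y)$, with no further work.

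For the ``only if'' direction, write $Y_0 := Y(x) = Y(y)$ and recall from the Notation preceding the lemma that $Y(x)$ is a \emph{nonempty} relatively open subset of $\overline{[x]}$ (by Lemma~\ref{lem-pos-cone} applied to the irreducible action $T|_{\overline{[x]}}$), and likewise $Y(y) \subset \overline{[y]}$. The key step is to exploit the dense orbit rather than to control all of $Y_0$: since $[x]$ is dense in $\overline{[x]}$ and $Y(x)$ is a nonempty relatively open subset, I can choose a point $w \in Y(x) \cap [x]$. Because $w \in [x]$ we have $[w] = [x]$ and hence $\overline{[w]} = \overline{[x]}$. On the other hand $w \in Y_0 = Y(y) \subset \overline{[y]}$, and $\overline{[y]}$ is closed and invariant, so $[w] \subset \overline{[y]}$ and therefore $\overline{[w]} \subset \overline{[y]}$. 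Combining these gives $\overline{[x]} = \overline{[w]} \subset \overline{[y]}$. Interchanging the roles of $x$ and $y$ (now choosing $w' \in Y(y) \cap [y] \subset \overline{[x]}$) yields $\overline{[y]} \subset \overline{[x]}$, and hence $\overline{[x]} = \overline{[y]}$.

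The main obstacle to watch out for is the temptation to prove the stronger---but false---assertions that every point of $Y(x)$ has dense orbit in $\overline{[x]}$, or that $Y(x)$ is dense in $\overline{[x]}$. Simple examples show both fail: for the full shift one has $Y(x) = \overline{[x]}$ even though $\overline{[x]}$ contains fixed points with tiny orbit closures, while for graph examples $Y(x)$ can reduce to a single point and be far from dense. The correct move is to avoid controlling all of $Y(x)$ and instead seize on a single point lying in the orbit $[x]$ itself, whose orbit closure is automatically $\overline{[x]}$; membership of that point in the other orbit closure then delivers one inclusion, and symmetry the other. The only background facts I would invoke are that $Y(x) \subset \overline{[x]}$ is nonempty and relatively open (from the Notation and Lemma~\ref{lem-pos-cone}), that $[x]$ is dense in $\overline{[x]}$, and that orbit closures are closed and invariant, so that $w \in \overline{[y]}$ forces $\overline{[w]} \subset \overline{[y]}$.
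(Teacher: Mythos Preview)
Your proof is correct and follows essentially the same approach as the paper's. Both arguments select a point of a dense orbit lying in the nonempty relatively open set $Y(x)=Y(y)$ and then use that orbit closures are closed and invariant to obtain one inclusion, with the other following by symmetry; the only cosmetic difference is that the paper picks $w\in Y(x)\cap[y]$ to show $y\in\overline{[x]}$ directly, whereas you pick $w\in Y(x)\cap[x]$ to show $\overline{[x]}\subset\overline{[y]}$.
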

\begin{proof}
  The ``if'' direction is trivial.  Suppose that $Y(x)=Y(y)$.  By
  symmetry, it suffices to show that $y\in\overline{[x]}$. Since
  $Y(x)=Y(y)$ is open in $\overline{[y]}$, we have $Y(x)\cap
  [y]\not=\emptyset$.  Since $Y(x)\subset [x]$, and $[x]$ is
  $G_{T}$-invariant, we deduce that $y\in \overline{[x]}$.
\end{proof}

The key to the proof of our main theorem is the following result,
which works at the level of irreducible representations.

\begin{thm}
  \label{thm:all irreps}
  Let $T$ be an action of $\N^{k}$ on a locally compact Hausdorff
  space $X$ by local homeomorphisms. Take $x \in X$ and $z
  \in \T^k$. Suppose that $\rho$ is a faithful irreducible representation
  of $C^*(G_{T|_{Y(x)}}/\Ii(x))$. Let $\iota : C^*(G_{T|_{Y(x)}}) \to
  C^*(G_T)$ be the inclusion of Corollary~\ref{cor-res-to-y}. Let
  \begin{equation*}
    \Phi : C^*(G_{T|_{Y(x)}}) \to
    \Ind_{H(x)^\perp}^{\T^k}(C^*(G_{T|_{Y(x)}}/\Ii(x)), \tilde{\alpha})
  \end{equation*}
  be the isomorphism of Proposition~\ref{prp:induced alg}, and let
  \begin{equation*}
    \varepsilon_z :
    \Ind_{H(x)^\perp}^{\T^k}\big(C^*(G_{T|_{Y(x)}}/\Ii(x)),
    \tilde{\alpha}\big) \to
    C^*(G_{T|_{Y(x)}}/\Ii(x))
  \end{equation*}
  denote evaluation at $z$. Let $R_x : C^*(G_T) \to
  C^*(G_{T|_{\overline{[x]}}})$ be the homomorphism induced by
  restriction of compactly supported functions. There is a unique
  irreducible representation $\pi_{x, z, \rho}$ of $C^*(G_T)$ such
  that
  \begin{enumerate}
  \item\label{it:factors} $\pi_{x, z, \rho}$ factors through $R_x$, and
  \item\label{it:agrees} the representation $\pi^0_{x, z, \rho}$ of
      $C^*(G_{T|_{\overline{[x]}}})$ such that $\pi_{x, z, \rho} = \pi^0_{x, z, \rho}
      \circ R_x$ satisfies $\pi^0_{x,z,\rho} \circ \iota = \rho \circ \varepsilon_z
      \circ \Phi$.
  \end{enumerate}
  Every irreducible representation of $C^*(G_T)$ has the form $\pi_{x, z,
  \rho}$ for some $x$, $z$, $\rho$.
\end{thm}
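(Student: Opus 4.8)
The plan is to reduce the whole statement to the bijection of Corollary~\ref{cor-res-to-y}\partref{2} and the structure of the induced algebra from Proposition~\ref{prp:induced alg}, using Proposition~\ref{prop-factors} to pass to an orbit closure. Throughout I would abbreviate $\sigma := \rho \circ \varepsilon_z \circ \Phi$, a representation of $\cs(G_{T|_{Y(x)}})$. The key preliminary observation, read off from the last display in the proof of Proposition~\ref{prp:induced alg}, is that $\varepsilon_z \circ \Phi$ restricts on $C_0(Y(x))$ to the canonical inclusion $C_0(Y(x)) \hookrightarrow \cs(G_{T|_{Y(x)}}/\Ii(x))$, independently of $z$.

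For existence and uniqueness of $\pi_{x,z,\rho}$, I would first note that $\sigma$ is irreducible (since $\Phi$ is an isomorphism, $\varepsilon_z$ is surjective, and $\rho$ is irreducible) and injective on $C_0(Y(x))$ (by the observation above, together with faithfulness of $\rho$). Applying Corollary~\ref{cor-res-to-y}\partref{2} to the irreducible action $T|_{\overline{[x]}}$, with $Y(x)$ in the role of $Y$, yields a unique irreducible representation $\pi^0$ of $\cs(G_{T|_{\overline{[x]}}})$ that is injective on $C_0(\overline{[x]})$ and satisfies $\pi^0 \circ \iota = \sigma$; I would then set $\pi_{x,z,\rho} := \pi^0 \circ R_x$, which is irreducible because $R_x$ is surjective and satisfies \partref{1} and \partref{2} by construction. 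For uniqueness, any irreducible $\pi$ satisfying \partref{1} and \partref{2} factors as $\pi = \pi^0 \circ R_x$ with $\pi^0$ irreducible; since $T$ acts irreducibly on $\overline{[x]}$, $C_0(\overline{[x]})$ is $G_T$-simple (Remark~\ref{rem-not-minimal}), so the invariant ideal $\ker(\pi^0|_{C_0(\overline{[x]})})$ (invariant by Proposition~\ref{prop-invar-ideals}) is forced to be $\sset{0}$ once we know $\pi^0 \circ \iota = \sigma$ is nonzero on $C_0(Y(x))$. Thus $\pi^0$ is injective on $C_0(\overline{[x]})$ and is the unique preimage of $\sigma$ under the bijection, so $\pi_{x,z,\rho}$ is unique.

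For the final assertion, let $\pi$ be an arbitrary irreducible representation of $\cs(G_T)$. Proposition~\ref{prop-factors} and Lemma~\ref{lem-g-irr} produce $x \in X$ with $\pi = \pi^0 \circ R_x$, where $\pi^0$ is irreducible and its $C_0$-support is all of $\overline{[x]}$, so $\pi^0$ is injective on $C_0(\overline{[x]})$. By Corollary~\ref{cor-res-to-y}\partref{2}, $\sigma := \pi^0 \circ \iota$ is an irreducible representation of $\cs(G_{T|_{Y(x)}})$ injective on $C_0(Y(x))$, and hence $\sigma \circ \Phi^{-1}$ is an irreducible representation of the induced algebra $\Ind_{H(x)^\perp}^{\T^k}(\cs(G_{T|_{Y(x)}}/\Ii(x)), \alphat)$. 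Since this induced algebra is a $C(\T^k/H(x)^\perp)$-algebra with fibres $\cs(G_{T|_{Y(x)}}/\Ii(x))$, the structure theory of induced algebras (\cite{rw:morita}*{\S6.3}, exactly as exploited in the proof of Theorem~\ref{thm:specialcaseJacobson}) guarantees that $\sigma \circ \Phi^{-1}$ factors through evaluation at some point; that is, there exist $z \in \T^k$ and an irreducible representation $\rho$ of $\cs(G_{T|_{Y(x)}}/\Ii(x))$ with $\sigma \circ \Phi^{-1} = \rho \circ \varepsilon_z$, equivalently $\sigma = \rho \circ \varepsilon_z \circ \Phi$.

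The step I expect to be the main obstacle is upgrading $\rho$ from merely injective on $C_0(Y(x))$ to \emph{faithful}, which is what is needed to match the hypotheses of the theorem. Injectivity of $\rho$ on $C_0(Y(x))$ follows from injectivity of $\sigma$ there together with the preliminary observation. To obtain faithfulness, I would use that $G_{T|_{Y(x)}}$ is amenable (being an open subgroupoid of the amenable groupoid $G_T$ of Lemma~\ref{lem:amenable}) and that $\Ii(x)$ is closed, so Proposition~\ref{prop-quotient-groupoid}\partref{5}~and~\partref{6} show that the quotient $G_{T|_{Y(x)}}/\Ii(x)$ is amenable and topologically principal. Exel's uniqueness theorem \cite{exe:pams11}*{Theorem~4.4} then promotes injectivity of $\rho$ on $C_0(Y(x))$ to faithfulness of $\rho$. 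With $\rho$ faithful, the triple $(x,z,\rho)$ meets the hypotheses of the theorem, and the identities $\pi = \pi^0 \circ R_x$ and $\pi^0 \circ \iota = \sigma = \rho \circ \varepsilon_z \circ \Phi$ exhibit $\pi$ as $\pi_{x,z,\rho}$, completing the argument.
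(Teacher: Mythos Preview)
Your argument is correct and follows essentially the same route as the paper's proof: reduce via Proposition~\ref{prop-factors} to $\overline{[x]}$, invoke the bijection of Corollary~\ref{cor-res-to-y}\partref{2}, push through the induced-algebra isomorphism $\Phi$, and upgrade $\rho$ to a faithful representation using amenability, topological principality of the quotient, and \cite{exe:pams11}*{Theorem~4.4}. Your uniqueness paragraph is in fact slightly more explicit than the paper's, since you spell out why $\pi^0$ must be injective on $C_0(\overline{[x]})$ before invoking the bijection.

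One small inaccuracy: you justify amenability of $G_{T|_{Y(x)}}$ by calling it an \emph{open} subgroupoid of $G_T$. But $Y(x)$ is only relatively open in $\overline{[x]}$, not in $X$, so $G_{T|_{Y(x)}}$ need not be open in $G_T$. The clean fix (which is what the paper does) is simply to observe that $G_{T|_{Y(x)}}$ is itself a Deaconu--Renault groupoid for the restricted action of $\N^k$ on $Y(x)$, so Lemma~\ref{lem:amenable} applies directly; alternatively, it is an open subgroupoid of the amenable groupoid $G_{T|_{\overline{[x]}}}$.
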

\begin{proof}
  The representation $\rho \circ \varepsilon_z \circ \Phi$ is an
  irreducible representation of $C^*(G_{T|_{Y(x)}})$, and is injective
  on $C_0(Y(x))$ because both $\Phi$ and $\varepsilon_z$ restrict to
  injections on $C_0(Y(x))$.
  Corollary~\ref{cor-res-to-y}(\ref{it:circiota}) applied to
  $Y(x)\subset \overline{[x]}$ yields a unique
  representation $\pi^0_{x,z,\rho}$ of $C^*(G_{T|_{\overline{[x]}}})$
  such that $\pi^0_{x,z,\rho} \circ \iota = \rho \circ \varepsilon_z
  \circ \Phi$.  The set $\overline{[x]}$ is a closed invariant set in
  $X$.  As in Proposition~\ref{prop-factors}, restriction of functions
  induces a homomorphism $R_x : C^*(G_T) \to
  C^*(G_{T|_{\overline{[x]}}})$. Now $\pi_{x, z, \rho} := \pi^0_{x, z,
    \rho} \circ R_x$ satisfies (\ref{it:factors})~and~(\ref{it:agrees}).

  For uniqueness, take a representation $\phi$ of
  $C^*(G_T)$ satisfying (\ref{it:factors})~and~(\ref{it:agrees}).
  Then $\phi$ vanishes on the ideal
  generated by $C_0(X \setminus \overline{[x]})$ which is precisely
  the kernel of $R_x$ by Proposition~\ref{prop-factors}. So $\phi =
  \phi_0 \circ R_x$ for some irreducible representation $\phi_0$ of
  $C^*(G_{T|_{\overline{[x]}}})$ satisfying $\phi_0 \circ \iota = \rho
  \circ \varepsilon_z \circ \Phi$. We saw in the preceding paragraph
  that $\pi^0_{x, z, \rho}$ is the unique such representation, so
  $\phi^0 = \pi^0_{x, z, \rho}$ and hence $\phi = \pi_{x, z, \rho}$.

  To see that every irreducible representation of $C^*(G_T)$ has the
  form $\pi_{x, z, \rho}$, fix an irreducible representation $\phi$ of
  $C^*(G_T)$. Since it is irreducible, Proposition~\ref{prop-factors}
  implies that $\phi = \phi^0 \circ R_x$ for some $x \in X$ and some
  irreducible representation $\phi^0$ of
  $C^*(G_{T|_{\overline{[x]}}})$ that is faithful on
  $C_0(\overline{[x]})$. Since $\Phi$ is an isomorphism,
  Corollary~\ref{cor-res-to-y}(\ref{it:circiota}) implies that
  $\phi^0$ is uniquely determined by $\phi^0 \circ \iota \circ
  \Phi^{-1}$, which is an irreducible representation of
  $\Ind_{H(x)^\perp}^{\T^k}(C^*(G_{T|_{Y(x)}}/\Ii(x)),
  \tilde{\alpha})$ that is faithful on $C_0(Y(x))$. By
  \cite{rw:morita}*{Proposition~6.16}, there exists $z$ such that
  $\ker(\varepsilon_z) \subset \ker \phi^0 \circ \iota \circ
  \Phi^{-1}$, and then $\phi^0 \circ \iota \circ \Phi^{-1}$ descends
  to an irreducible representation $\rho$ of
  $C^*(G_{T|_{Y(x)}}/\Ii(x))$. That is $\rho \circ \varepsilon_z =
  \phi^0 \circ \iota \circ \Phi^{-1}$. Post-composing with $\Phi$ on
  both sides of this equation shows that $\phi^0 \circ \iota = \rho
  \circ \varepsilon_z \circ \Phi$. So we now need only prove that $\rho$ is faithful.

  Since $\phi^0$ is faithful on $C_0(\overline{[x]})$, the composition
  $\phi^0 \circ \iota \circ \Phi^{-1}$ is faithful on $C_0(Y(x))$, and
  hence $\rho$ is faithful on $C_0(Y(x)) =
  C_0\big((G_{T|_{Y(x)}}/\Ii(x))^{(0)}\big)$.
  Proposition~\ref{prop-quotient-groupoid}(\ref{it:quotient TP})
  implies that $G_{T|_{Y(x)}}/\Ii(x)$ is topologically principal, and
  Proposition~\ref{prop-quotient-groupoid}(\ref{it:quotient amenable})
  combined with Lemma~\ref{lem:amenable} implies that
  $G_{T|_{Y(x)}}/\Ii(x)$ is amenable. So
  \cite{exe:pams11}*{Theorem~4.4} implies that $\rho$ is faithful as
  claimed.
\end{proof}

\begin{proof}[Proof of Theorem~\ref{thm-mainthm}]
  Fix $x \in \go_T$ and $z \in \T^k$. Let $\alpha_z \in
  \Aut(C^*(G_T))$ be the automorphism of Lemma~\ref{lem:actions}, and
  let $\omega_{[x]}$ be the irreducible representation of
  Lemma~\ref{lem:augmentations irreducible}. Then $\pi_{x, z} :=
  \omega_{[x]} \circ \alpha_z$ is an irreducible representation
  satisfying~\eqref{eq:pi_xz formula}. Furthermore
  $\pi_{x,z}|_{C_{0}(\go)}$ has support $\overline{[x]}$.

  It is clear that the relation $\sim$ is an equivalence relation.
  To see that $\ker \pi_{x,z} = \ker \pi_{y, w}$ if and only if
  $\overline{[x]} = \overline{[y]}$ and $\overline{z}w \in
  H(x)^\perp$, first suppose that $\overline{[x]} \not=
  \overline{[w]}$. Then $\ker\pi_{x,z} \cap C_0(X) \not= \ker\pi_{y,
    w} \cap C_0(X)$.

  Second, suppose that $\overline{[x]} = \overline{[y]}$ but
  $\overline{z}w \not\in H(x)$.  Then $\pi_{x,z}$ and $\pi_{y,w}$
  descend to representations $\pi^0_{x,z}$ and $\pi^0_{y,w}$ of
  $C^*(G_{T|_{\overline{[x]}}})$. Corollary~\ref{cor-res-to-y}(\ref{it:circiota})
  implies that their kernels are equal if and only if the kernels of
  $\pi^0_{x,z} \circ \iota$ and $\pi^0_{y, w} \circ \iota$ are
  equal. Lemma~\ref{lem-y-separate} shows that $Y(x) = Y(y)$, and for
  $f\in\cc(G_{T}\restr{Y(x)}) = \cc(G_{T}\restr{Y(y)})$, we have
  \begin{equation*}
    \pi^{0}_{x,z}\circ \iota (f)\delta_{y}=\sum_{(u,g,y)\in
      G_{T}\restr{Y(x)}} z^{g}f(u,g,y)\delta_{u}.
  \end{equation*}
  Lemma~\ref{lem-Ht-closed} shows that for $n\in H(x)$,
  \begin{equation*}
    \sum_{(u,g,y)\in
      G_{T}\restr{Y(x)}} z^{g}f(u,g,y)\delta_{u}= \sum_{(u,g+n,y)\in
      G_{T}\restr{Y(x)}} z^{g}f(u,g,y)\delta_{u}.
  \end{equation*}
  As in Lemma~\ref{lem-cts-linear}, for $\phi\in\cc(H(x))$ and $f\in
  \cc(G_{T}\restr {Y(x)})$, we have $\pi_{x,z}\circ\iota (\phi\cdot
  f)=\hat\phi(z)(\pi_{x,z}\circ \iota)(f)$ and $\pi_{y,w}\circ \iota
  (\phi\cdot f)=\hat\phi(w) (\pi_{y,w}\circ \iota )(f)$.  Choose
  $\phi$ such that $\hat\phi(w)=0$ and $\hat\phi(z)\not=0$, and choose
  $f \in \cc(Y(x))$ such that $f(x) = 1$. Then
  $\pi_{y,w}\circ \iota(\phi\cdot f) = 0$ whereas
  $\pi_{x,z}(\phi \cdot f)\delta_x = \hat\phi(z)\delta_x \not= 0$.
  So the kernels are not equal.

  Third, suppose that $\overline{[x]} = \overline{[y]}$ and
  $\overline{z}w \in H(x)^\perp$.  Again Lemma~\ref{lem-y-separate}
  shows that $Y(x) = Y(y)$.  Let ${\pi}_{[x]}$ and ${\pi}_{[y]}$ be
  the faithful irreducible representations of
  $C^*(G_{T|_{Y(x)}}/\Ii(x)) = C^*(G_{T|_{Y(y)}}/\Ii(y))$ described by
  Lemma~\ref{lem:augmentations irreducible}. It is routine to check
  that $\pi^0_{x, z} \circ \iota = \omega_{[x]} \circ \varepsilon_z
  \circ \Phi$ and $\pi^0_{y, w} \circ \iota = \omega_{[y]} \circ
  \varepsilon_w \circ \Phi$. We have
  \begin{equation*}
    \omega_{[x]} \circ \varepsilon_z \circ \Phi \circ \tilde\alpha_{\overline{z}w}
    = \omega_{[x]} \circ \varepsilon_w \circ \Phi.
  \end{equation*}
  Since $\tilde\alpha_{\overline{z}w}$ is an automorphism, we deduce
  that $\ker(\omega_{[x]} \circ \varepsilon_z \circ \Phi) =
  \ker(\omega_{[y]} \circ \varepsilon_w \circ \Phi)$. Thus
  $\ker(\pi^0_{x, z} \circ \iota) = \ker(\pi^0_{y, w} \circ \iota)$.
  Now Corollary~\ref{cor-res-to-y}(\ref{it:circiota}) implies that
  $\pi^0_{x,z}$ and $\pi^0_{y,w}$ have the same kernel. Since
  $\overline{[x]} = \overline{[y]}$, we have $R_x = R_y$, and so
  \begin{equation*}
    \ker \pi_{x,z}
    = R_{x}^{-1}(\ker\pi^0_{x,z})
    = R_{y}^{-1}(\ker\pi^0_{y,w})
    = \ker \pi_{y,w}.
  \end{equation*}

  It remains to show that $(x,z) \mapsto \ker\pi_{x,z}$ is
  surjective. Fix a primitive ideal $I \lhd C^*(G_T)$.
  Theorem~\ref{thm:all irreps} gives $I = \ker
  \pi_{x, z, \rho}$ for some $x$,$z$, $\rho$. Choose $y \in [x]
  \cap Y(x)$, and let $\tilde\omega_{[y]}$ be the faithful irreducible
  representation of $C^*(G_{T|_{Y(x)}}/\Ii(x))$ of
  Lemma~\ref{lem:augmentations irreducible}. Since $\rho$ is
  faithful on $C^*(G_{T|_{Y(x)}}/\Ii(x)))$, we have
  $\ker(\omega_{[y]} \circ \varepsilon_z \circ \Phi) = \ker(\rho \circ
  \varepsilon_z \circ \Phi)$.  So Theorem~\ref{thm:all irreps}
  gives $\ker \pi_{x, z, \omega_{[y]}} = \ker(\pi_{x, z,
  \rho})$. As in the second step above, one checks on basis
  elements that $\pi_{x,z} = \pi_{x, z, \omega_{[y]}}$, completing the proof.
\end{proof}


\def\noopsort#1{}\def\cprime{$'$} \def\sp{^}
\begin{bibdiv}
\begin{biblist}

\bib{anaren:amenable00}{book}{
      author={Anantharaman-Delaroche, Claire},
      author={Renault, Jean},
       title={Amenable groupoids},
      series={Monographies de L'Enseignement Math\'ematique [Monographs of
  L'Enseignement Math\'ematique]},
   publisher={L'Enseignement Math\'ematique},
     address={Geneva},
        date={2000},
      volume={36},
        ISBN={2-940264-01-5},
        note={With a foreword by Georges Skandalis and Appendix B by E.
  Germain},
      review={\MR{MR1799683 (2001m:22005)}},
}

\bib{arv:invitation}{book}{
      author={Arveson, William},
       title={An {I}nvitation to {$C\sp*$}-algebras},
   publisher={Springer-Verlag},
     address={New York},
        date={1976},
        note={Graduate Texts in Mathematics, No. 39},
      review={\MR{MR0512360 (58 \#23621)}},
}

\bib{bcfs:sf14}{article}{
     author={Brown, Jonathan},
     author={Clark, Lisa Orloff},
     author={Farthing, Cynthia},
     author={Sims, Aidan},
      title={Simplicity of algebras associated to \'etale groupoids},
    journal={Semigroup Forum},
     volume={88},
       date={2014},
     number={2},
      pages={433--452},
       issn={0037-1912},
     review={\MR{3189105}},
        doi={10.1007/s00233-013-9546-z},
}

\bib{ckss:fja14}{article}{
     author={Carlsen, Toke Meier},
     author={Kang, Sooran},
     author={Shotwell, Jacob},
     author={Sims, Aidan},
      title={The primitive ideals of the Cuntz-Krieger algebra of a row-finite higher-rank graph with no sources},
    journal={J. Funct. Anal.},
     volume={266},
       date={2014},
     number={4},
      pages={2570--2589},
       issn={0022-1236},
     review={\MR{3150171}},
        doi={10.1016/j.jfa.2013.08.029},
}

\bib{cla:jot07}{article}{
      author={Clark, Lisa~Orloff},
       title={Classifying the types of principal groupoid {$C\sp *$}-algebras},
        date={2007},
        ISSN={0379-4024},
     journal={J. Operator Theory},
      volume={57},
      number={2},
       pages={251\ndash 266},
      review={\MR{MR2328998}},
}

\bib{exe:pams11}{article}{
      author={Exel, R.},
       title={Non-{H}ausdorff \'etale groupoids},
        date={2011},
        ISSN={0002-9939},
     journal={Proc. Amer. Math. Soc.},
      volume={139},
      number={3},
       pages={897\ndash 907},
         url={http://dx.doi.org/10.1090/S0002-9939-2010-10477-X},
      review={\MR{2745642 (2012b:46148)}},
}

\bib{exeren:etds07}{article}{
      author={Exel, R.},
      author={Renault, J.},
       title={Semigroups of local homeomorphisms and interaction groups},
        date={2007},
        ISSN={0143-3857},
     journal={Ergodic Theory Dynam. Systems},
      volume={27},
      number={6},
       pages={1737\ndash 1771},
         url={http://dx.doi.org/10.1017/S0143385707000193},
      review={\MR{2371594 (2009k:46124)}},
}

\bib{exe:bbms08}{article}{
      author={Exel, Ruy},
       title={Inverse semigroups and combinatorial {$C^\ast$}-algebras},
        date={2008},
        ISSN={1678-7544},
     journal={Bull. Braz. Math. Soc. (N.S.)},
      volume={39},
      number={2},
       pages={191\ndash 313},
         url={http://dx.doi.org/10.1007/s00574-008-0080-7},
      review={\MR{2419901 (2009b:46115)}},
}

\bib{gre:am78}{article}{
      author={Green, Philip},
       title={The local structure of twisted covariance algebras},
     journal={Acta Math.},
      volume={140},
        date={1978},
      number={3-4},
       pages={191--250},
        issn={0001-5962},
      review={\MR{0493349 (58 \#12376)}},
}

\bib{honszy:jmsj04}{article}{
      author={Hong, Jeong~Hee},
      author={Szyma{\'n}ski, Wojciech},
       title={The primitive ideal space of the {$C^\ast$}-algebras of infinite
  graphs},
        date={2004},
        ISSN={0025-5645},
     journal={J. Math. Soc. Japan},
      volume={56},
      number={1},
       pages={45\ndash 64},
         url={http://dx.doi.org/10.2969/jmsj/1191418695},
      review={\MR{2023453 (2004j:46088)}},
}

\bib{huerae:etds97}{article}{
      author={an Huef, Astrid},
      author={Raeburn, Iain},
       title={The ideal structure of Cuntz-Krieger algebras},
     journal={Ergodic Theory Dynam. Systems},
      volume={17},
        date={1997},
      number={3},
       pages={611--624},
        issn={0143-3857},
      review={\MR{1452183 (98k:46098)}},
         doi={10.1017/S0143385797079200},
}

\bib{ionwil:pams08}{article}{
      author={Ionescu, Marius},
      author={Williams, Dana~P.},
       title={Irreducible representations of groupoid {$C\sp *$}-algebras},
        date={2009},
        ISSN={0002-9939},
     journal={Proc. Amer. Math. Soc.},
      volume={137},
      number={4},
       pages={1323\ndash 1332},
      review={\MR{MR2465655}},
}

\bib{kumpas:nyjm00}{article}{
      author={Kumjian, Alex},
      author={Pask, David},
       title={Higher rank graph {$C^\ast$}-algebras},
        date={2000},
        ISSN={1076-9803},
     journal={New York J. Math.},
      volume={6},
       pages={1\ndash 20},
         url={http://nyjm.albany.edu:8000/j/2000/6_1.html},
      review={\MR{1745529 (2001b:46102)}},
}

\bib{kprr:jfa97}{article}{
      author={Kumjian, Alex},
      author={Pask, David},
      author={Raeburn, Iain},
      author={Renault, Jean},
       title={Graphs, groupoids, and {C}untz-{K}rieger algebras},
        date={1997},
        ISSN={0022-1236},
     journal={J. Funct. Anal.},
      volume={144},
      number={2},
       pages={505\ndash 541},
      review={\MR{MR1432596 (98g:46083)}},
}

\bib{kps:xx14}{unpublished}{
      author={Kumjian, Alex},
      author={Pask, David},
      author={Sims, Aidan},
       title={Simplicity of twisted $c^*$-algebras of higher-rank graphs},
        date={2013}
        note={(1411.3860)}
}

\bib{mrw:jot87}{article}{
      author={Muhly, Paul~S.},
      author={Renault, Jean~N.},
      author={Williams, Dana~P.},
       title={Equivalence and isomorphism for groupoid {$C^*$}-algebras},
        date={1987},
        ISSN={0379-4024},
     journal={J. Operator Theory},
      volume={17},
      number={1},
       pages={3\ndash 22},
      review={\MR{88h:46123}},
}

\bib{mrw:tams96}{article}{
      author={Muhly, Paul~S.},
      author={Renault, Jean~N.},
      author={Williams, Dana~P.},
       title={Continuous-trace groupoid {$C\sp \ast$}-algebras. {III}},
        date={1996},
        ISSN={0002-9947},
     journal={Trans. Amer. Math. Soc.},
      volume={348},
      number={9},
       pages={3621\ndash 3641},
      review={\MR{MR1348867 (96m:46125)}},
}

\bib{muhwil:plms395}{article}{
      author={Muhly, Paul S.},
      author={Williams, Dana P.},
       title={Groupoid cohomology and the Dixmier-Douady class},
     journal={Proc. London Math. Soc. (3)},
      volume={71},
        date={1995},
      number={1},
       pages={109--134},
        issn={0024-6115},
      review={\MR{1327935 (97d:46082)}},
         doi={10.1112/plms/s3-71.1.109},
}

\bib{muhwil:jams04}{article}{
      author={Muhly, Paul~S.},
      author={Williams, Dana~P.},
       title={The {D}ixmier-{D}ouady class of groupoid crossed products},
        date={2004},
        ISSN={1446-7887},
     journal={J. Aust. Math. Soc.},
      volume={76},
      number={2},
       pages={223\ndash 234},
      review={\MR{MR2041246 (2005e:46128)}},
}

\bib{muhwil:nyjm08}{book}{
      author={Muhly, Paul S.},
      author={Williams, Dana P.},
       title={Renault's equivalence theorem for groupoid crossed products},
      series={NYJM Monographs},
      volume={3},
   publisher={State University of New York, University at Albany, Albany, NY},
        date={2008},
       pages={87},
      review={\MR{2547343 (2010h:46112)}},
        note={Available at http://nyjm.albany.edu:8000/m/2008/3.htm},
}

\bib{rw:morita}{book}{
      author={Raeburn, Iain},
      author={Williams, Dana~P.},
       title={Morita equivalence and continuous-trace {$C^*$}-algebras},
      series={Mathematical Surveys and Monographs},
   publisher={American Mathematical Society},
     address={Providence, RI},
        date={1998},
      volume={60},
        ISBN={0-8218-0860-5},
      review={\MR{2000c:46108}},
}

\bib{ren:groupoid}{book}{
      author={Renault, Jean},
       title={A groupoid approach to $C^{\ast} $-algebras},
      series={Lecture Notes in Mathematics},
      volume={793},
   publisher={Springer, Berlin},
        date={1980},
       pages={ii+160},
        isbn={3-540-09977-8},
      review={\MR{584266 (82h:46075)}},
}
\bib{ren:jot91}{article}{
      author={Renault, Jean},
       title={The ideal structure of groupoid crossed product $C^\ast$-algebras},
        note={With an appendix by Georges Skandalis},
     journal={J. Operator Theory},
      volume={25},
        date={1991},
      number={1},
       pages={3--36},
        issn={0379-4024},
      review={\MR{1191252 (94g:46074)}},
}

\bib{ren:xx13}{unpublished}{
      author={Renault, Jean},
       title={Topological amenability is a {B}orel property},
        date={2013},
        note={(arXiv:1302.0636)},
}

\bib{rosgar:finitely99}{book}{
      author={Rosales, J.~C.},
      author={Garc{\'{\i}}a-S{\'a}nchez, P.~A.},
       title={Finitely generated commutative monoids},
   publisher={Nova Science Publishers Inc.},
     address={Commack, NY},
        date={1999},
        ISBN={1-56072-670-9},
      review={\MR{1694173 (2000d:20074)}},
}

\bib{sww:xx13}{unpublished}{
      author={Sims, Aidan},
      author={Whitehead, Benjamin},
      author={Whittaker, Michael F.},
       title={Twisted $C^*$-algebras associated to finitely aligned higher-rank graphs},
     journal={Doc. Math.},
      volume={19},
        date={2014},
       pages={831--866},
        issn={1431-0635},
      review={\MR{3262073}},
}

\bib{spi:tams12}{article}{
      author={Spielberg, Jack},
       title={{$C^\ast$}-algebras for categories of paths associated to the
  {B}aumslag-{S}olitar groups},
        date={2012},
        ISSN={0024-6107},
     journal={J. Lond. Math. Soc. (2)},
      volume={86},
      number={3},
       pages={728\ndash 754},
         url={http://dx.doi.org/10.1112/jlms/jds025},
      review={\MR{3000828}},
}

\bib{wil:crossed}{book}{
      author={Williams, Dana~P.},
       title={Crossed products of {$C{\sp \ast}$}-algebras},
      series={Mathematical Surveys and Monographs},
   publisher={American Mathematical Society},
     address={Providence, RI},
        date={2007},
      volume={134},
        ISBN={978-0-8218-4242-3; 0-8218-4242-0},
      review={\MR{MR2288954 (2007m:46003)}},
}

\bib{yee:cm06}{incollection}{
      author={Yeend, Trent},
       title={Topological higher-rank graphs and the {$C^*$}-algebras of
  topological 1-graphs},
        date={2006},
   booktitle={Operator theory, operator algebras, and applications},
      series={Contemp. Math.},
      volume={414},
   publisher={Amer. Math. Soc., Providence, RI},
       pages={231\ndash 244},
         url={http://dx.doi.org/10.1090/conm/414/07812},
      review={\MR{2277214 (2007j:46100)}},
}

\bib{yee:jot07}{article}{
      author={Yeend, Trent},
       title={Groupoid models for the {$C^*$}-algebras of topological
  higher-rank graphs},
        date={2007},
        ISSN={0379-4024},
     journal={J. Operator Theory},
      volume={57},
      number={1},
       pages={95\ndash 120},
      review={\MR{2301938 (2008f:46074)}},
}

\end{biblist}
\end{bibdiv}

\end{document}